\documentclass{amsart}
\usepackage{amssymb}
\usepackage{multicol}

\allowdisplaybreaks

\newtheorem{theorem}{Theorem}[section]
\newtheorem{proposition}[theorem]{Proposition}
\newtheorem{conjecture}[theorem]{Conjecture}
\newtheorem{corollary}[theorem]{Corollary}
\newtheorem{example}[theorem]{Example}
\newtheorem{lemma}[theorem]{Lemma}
\numberwithin{equation}{section}

\begin{document}

\title[On Automorphisms of quantum Schubert cells]{On Automorphisms of quantum
Schubert cells}

\author{Garrett Johnson}

\address{Department of Mathematics and Physics \\North Carolina Central
University\\Durham, NC 27707\\USA}

\email{gjohns62@nccu.edu}

\author{Hayk Melikyan}

\address{Department of Mathematics and Physics \\North Carolina Central
University\\Durham, NC 27707\\USA}

\email{gmelikian@nccu.edu}

\subjclass[2020]{Primary: 17B37; Secondary: 16T20, 16W20}

\keywords{quantum Schubert cell algebras, automorphisms, quantum algebras,
nilradicals}

\thanks{The authors were supported by NSF grant DMS-1900823.}

\begin{abstract}

    Automorphisms of the quantum Schubert cell algebras ${\mathcal U}_q^\pm[w]$
    of De Concini, Kac, Procesi \cite{DKP} and Lusztig \cite{L} and their
    restrictions to some key invariant subalgebras are studied. We develop some
    general rigidity results and apply them to completely determine the
    automorphism group in several cases.

    We focus primarily on those cases when the underlying Lie algebra
    $\mathfrak{g}$ is finite dimensional and simple with rank $r > 1$, and $w$
    is a parabolic element of the Weyl group, say $w = w_o^Jw_o$, for some
    nonempty subset $J$ of simple roots. Here, ${\mathcal U}_q^\pm[w]$
    is a deformation of the universal enveloping algebra of the
    nilradical of a parabolic subalgebra of $\mathfrak{g}$.  In this setting we
    conjecture that, with the exception of two specific low rank cases, the
    automorphism group of ${\mathcal U}_q^{\pm}[w]$ is the semidirect product
    of an algebraic torus of rank $r$ with the group of Dynkin diagram
    symmetries that preserve $J$. This conjecture is a more general form of the
    Launois-Lenagan \cite{LL} and Andruskiewitsch-Dumas \cite{AD} conjectures
    regarding the automorphism groups of the algebras of quantum matrices and
    the algebras ${\mathcal U}_q^+(\mathfrak{g})$, respectively.  We completely
    determine the automorphism group in several instances, including all cases when
    $\mathfrak{g}$ is of type $F_4$ or $G_2$, as well as those cases when the
    quantum Schubert cell algebras are the algebras of quantum symmetric
    matrices.

\end{abstract}

\maketitle

\section{Introduction and summary of the results}

Quantum Schubert cell algebras ${\mathcal U}_q^\pm[w]$ were introduced by De
Concini, Kac, Procesi \cite{DKP} and Lusztig \cite{L}. They are a family of
subalgebras of the Drinfeld-Jimbo quantized enveloping algebra ${\mathcal
U}_q(\mathfrak{g})$ indexed by the elements $w$ of the Weyl group, and have
appeared in several contexts, including ring theory \cite{MC, Y}, crystal basis
theory \cite{Kashiwara, Lusztig}, and cluster algebras \cite{GLS, Goodearl
Yakimov}. Several important cases arise when the Weyl group element $w$ is a
parabolic element, say $w_o^Jw_o$, for some nonempty subset $J$ of simple
roots. Here, the corresponding algebra ${\mathcal U}_q^\pm[w]$ can be viewed as
a deformation of the universal enveloping algebra of the nilradical
$\mathfrak{n}_J$ of a parabolic subalgebra of $\mathfrak{g}$.  In such a
setting, we denote the quantum Schubert cell algebra by ${\mathcal
U}_q(\mathfrak{n}_J)$ and we refer to it as a \textit{quantized nilradical} for
short.

Throughout, the underlying base field for all algebras will be denoted by
$\mathbb{K}$. We do not need to assume that $\mathbb{K}$ is algebraically
closed or that it is of characteristic zero. The role of the Lie algebra
$\mathfrak{g}$ in defining the $\mathbb{K}$-algebras ${\mathcal U}_q^\pm[w]$,
${\mathcal U}_q \left( \mathfrak{n}_J \right)$, and ${\mathcal U}_q\left(
\mathfrak{g} \right)$ can be viewed as purely symbolic. We will denote the
multiplicative group of nonunits by $\mathbb{K}^\times$.

We will turn our attention towards studying the automorphisms of these
algebras. We assume that the deformation parameter $q \in \mathbb{K}^\times$ is
not a root of unity. Basically, there is a dichotomy in the structure of
quantum algebras depending on whether or not $q$ is a root of unity. When $q$
is a root of unity, these algebras more closely resemble deformations of
modular Lie algebras (Lie algebras over fields of positive characteristic).
These algebras have large centers, and are therefore closer to being
commutative. This, in effect, gives less control over automorphisms.  In such
situations, various types of noncommutative discriminants \cite{CPWZ1, CPWZ2,
CGWZ} have been developed as tools to study automorphisms (see e.g. \cite{GKM,
GL}).

Automorphism groups of quantized nilradicals (when $q$ is not a root of unity)
have already been studied in several cases. For instance, when the underlying
Lie algebra is $\mathfrak{sl}(n)$ and $J$ is a singleton, say $J =
\left\{\alpha_k\right\}$, the quantized nilradical ${\mathcal
U}_q(\mathfrak{n}_J)$ is isomorphic to the algebra of quantum $k \times (n -
k)$ matrices.  Launois and Lenagan prove in \cite{LL} that the automorphism
group is $\left(\mathbb{K}^\times \right)^{n-1}$ whenever $k \neq n - k$ (and
when $(n,k) \not\in \left\{(4,1), (4,3)\right\}$) by using certain properties
of height one prime ideals.  These techniques do not apply when $k = n - k$,
yet they conjecture that the automorphism group in this remaining case is
$\left(\mathbb{K}^\times \right)^{n-1} \rtimes \mathbb{Z}_2$.  Their conjecture
was already known to be true in the $2 \times 2$ case by the work of Alev and
Chamarie \cite{AC}.  Launois and Lenagan later proved their conjecture for the
$3\times 3$ case in \cite{LL2}. Finally, the Launois-Lenagan conjecture was
proved in the remaining cases by Yakimov in \cite{Y4}.

An interesting phenomenon regarding automorphisms arises when $k = 1$ or $n - k
= 1$. In this setting ${\mathcal U}_q\left(\mathfrak{n}_J\right)$ is isomorphic
to $(n-1)$-dimensional quantum affine space ${\mathcal A}_q \left(
\mathbb{K}^{n-1} \right)$. As an algebra, ${\mathcal A}_q\left( \mathbb{K}^{n -
1} \right)$ is generated by elements $x_1,\dots,x_{n-1}$ and has defining
relations $x_ix_j = qx_jx_i$ whenever $i < j$.  In \cite{AC}, Alev and Chamarie
studied automorphisms of several types of noncommutative algebras, including
multiparameter and uniparameter quantum affine space. Their work predates the
Launois-Lenagan conjecture.  Interestingly, automorphisms of ${\mathcal
A}_q\left( \mathbb{K}^{n-1} \right)$ send each generator $x_i$ to a scalar
multiple of itself whenever $n \neq 4$.  Alev and Chamarie proved in
\cite[Theorem 1.4.6]{AC} that every automorphism $\phi$ of ${\mathcal
A}_q\left(\mathbb{K}^3\right)$ has the form \[ \phi(x_1) = a_1x_1,
\hspace{10mm} \phi(x_2) = a_2x_2 + bx_1x_3, \hspace{10mm} \phi(x_3) = a_3x_3,
\] where $a_1, a_2, a_3 \in \mathbb{K}^\times$ and $b\in \mathbb{K}$.  Hence,
the automorphism group of ${\mathcal A}_q(\mathbb{K}^3)$ is isomorphic to the
semidirect product $\left( \mathbb{K}^\times \right)^3 \rtimes \mathbb{K}$.  On
the other hand, if $n \neq 4$, they proved that the automorphism group of
${\mathcal A}_q(\mathbb{K}^{n-1})$ is isomorphic to $\left( \mathbb{K}^\times
\right)^{n-1}$. Here, every automorphism sends $x_i$ to a nonzero multiple of
itself.

Automorphism groups of quantized nilradicals have also been determined in all
cases when $J$ is chosen to be the full set of simple roots. In particular, we
assume now that $\mathfrak{g}$ is an arbitrarily chosen finite dimensional
complex simple Lie algebra with $\operatorname{rank}(\mathfrak{g}) = r > 1$,
and $J$ is the full set of simple roots. Here, ${\mathcal U}_q
\left(\mathfrak{n}_J \right)$ is the entire positive part of ${\mathcal U}_q
\left(\mathfrak{g} \right)$. The Chevalley generators $E_1,\dots, E_r$ generate
${\mathcal U}_q^+ \left( \mathfrak{g} \right)$ as an algebra and satisfy the
$q$-Serre relations. With this, it is not too difficult to observe that for
every $r$-tuple $(a_1,\dots,a_r) \in \left( \mathbb{K}^\times \right)^r$, there
is an algebra automorphism $\phi$ of ${\mathcal U}_q^+ \left( \mathfrak{g}
\right)$ such that $\phi(E_i) = a_iE_i$ ($i=1,\dots,r$). Furthermore, for every
symmetry $\psi$ of the underlying Dynkin diagram, there is an algebra
automorphism of ${\mathcal U}_q^+ \left(\mathfrak{g} \right)$ given by the rule
$E_i \mapsto E_{\psi(i)}$.  Andruskiewitsch and Dumas \cite{AD} conjectured
that the automorphism group of ${\mathcal U}_q^+ \left(\mathfrak{g} \right)$ is
generated by only these types of automorphisms. That is to say, they
conjectured that
\[
    \operatorname{Aut} \left( {\mathcal U}_q^+ \left( \mathfrak{g} \right)
    \right) \cong \left( \mathbb{K}^\times \right)^{\operatorname{rank} \left(
    \mathfrak{g} \right)} \rtimes \operatorname{Dynkin-Aut} \left( \mathfrak{g}
    \right),
\]
where $\operatorname{Dynkin-Aut} \left( \mathfrak{g} \right)$ is the
automorphism group of the Dynkin diagram of $\mathfrak{g}$. Yakimov proved this
conjecture in \cite{Y3} using a rigidity result involving quantum tori.

In describing the automorphism group ${\mathcal U}_q \left( \mathfrak{n}_J
\right)$ (for arbitrarily chosen $\mathfrak{g}$ and $J$), we need to introduce
the subgroup of Dynkin diagram symmetries that fixes $J$,
\[
    \operatorname{Dynkin-Aut}_J(\mathfrak{g}) := \left\{ \psi \in
    \operatorname{Dynkin-Aut}(\mathfrak{g}) \mid \psi (J) = J \right\}.
\]
We conjecture the following result regarding the automorphism groups of
quantized nilradicals.

\begin{conjecture}
    \label{conj}

    Let $\mathfrak{g}$ be a finite dimensional complex simple Lie algebra
    with $\operatorname{rank}(\mathfrak{g}) > 1$. Suppose $J$ is a nonempty
    subset of simple roots, and let ${\mathcal U}_q(\mathfrak{n}_J)$ be the
    corresponding quantized nilradical. Then
    \[
        \operatorname{Aut}\left({\mathcal U}_q \left( \mathfrak{n}_J \right)
        \right) \cong \left( \mathbb{K}^\times
        \right)^{\operatorname{rank}\left( \mathfrak{g} \right)} \rtimes
        \operatorname{Dynkin-Aut}_J(\mathfrak{g}).
    \]
    provided ${\mathcal U}_q \left( \mathfrak{n}_J \right) \not\cong
    \mathcal{A}_q(\mathbb{K}^3)$.

\end{conjecture}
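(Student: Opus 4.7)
The plan is to follow the template established by Yakimov in his proof of the Andruskiewitsch-Dumas conjecture (the special case where $J$ is the full set of simple roots), adapting it to an arbitrary parabolic element $w = w_o^J w_o$. The easy direction --- that $(\mathbb{K}^\times)^{\operatorname{rank}(\mathfrak{g})} \rtimes \operatorname{Dynkin-Aut}_J(\mathfrak{g})$ injects into $\operatorname{Aut}({\mathcal U}_q(\mathfrak{n}_J))$ --- follows essentially from the definitions. Torus rescalings of the Lusztig root vectors $E_\beta$ indexed by the inversion set of $w$ define algebra maps because the Levendorskii-Soibelman straightening relations are homogeneous with respect to the root-lattice grading, and any $\psi \in \operatorname{Dynkin-Aut}_J(\mathfrak{g})$ yields an automorphism because $J$, and hence the Weyl group element $w_o^J w_o$ together with the convex order determining a PBW basis, are preserved under $\psi$.

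For the hard direction, the strategy is to exploit the fact that ${\mathcal U}_q(\mathfrak{n}_J)$ is a symmetric CGL extension (in the sense of Goodearl-Yakimov), carrying a natural action of an algebraic torus $H$ coming from its root-lattice grading. Any $\phi \in \operatorname{Aut}({\mathcal U}_q(\mathfrak{n}_J))$ must permute the height-one $H$-prime ideals --- a canonical finite set indexable by the elements of $J$ --- and this yields a homomorphism from $\operatorname{Aut}({\mathcal U}_q(\mathfrak{n}_J))$ to the symmetric group on $J$. Next, $\phi$ extends to an automorphism of the quantum torus ${\mathcal T}$ obtained by inverting the sequence of homogeneous prime elements produced by the CGL structure (equivalently, by inverting the initial cluster of the quantum cluster algebra structure on ${\mathcal U}_q(\mathfrak{n}_J)$). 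By rigidity results for quantum tori in the spirit of Osborn-Passman and Yakimov, the extension sends each generator of ${\mathcal T}$ to a scalar multiple of a Laurent monomial in the others; compatibility with the bicharacter encoding the $q$-commutation then pins down this monomial data to arise from an isometry of the root lattice preserving the simple roots in $J$ up to sign, and preservation of the positive cone $\mathbb{N}[J]$ forces this isometry to come from an element of $\operatorname{Dynkin-Aut}_J(\mathfrak{g})$. Descending back from ${\mathcal T}$, a torus twist will absorb the scalar data, leaving only the Dynkin-diagram factor.

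The main obstacle --- and the reason the conjecture excludes ${\mathcal A}_q(\mathbb{K}^3)$ --- is controlling when $\phi$ can send a generator $E_\beta$ to a scalar multiple of itself \emph{plus} a correction term supported on a different monomial of the same $H$-weight. Generically this cannot occur, since the root-lattice weights of the PBW generators are linearly independent pieces of combinatorial data, but in degenerate low-rank situations a nontrivial monomial in root vectors can share its $H$-weight with a generator, producing Alev-Chamarie-type exotic automorphisms exactly as in ${\mathcal A}_q(\mathbb{K}^3)$. A uniform proof therefore demands a combinatorial analysis of the inversion set of $w_o^J w_o$ for each pair $(\mathfrak{g}, J)$, showing that outside an explicit list of low-rank exceptions no proper monomial in the PBW generators shares the $H$-weight of a generator, and then coupling this weight-uniqueness statement with the quantum-torus rigidity sketched above. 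I expect this weight-uniqueness step, rather than the rigidity machinery, to be the technically demanding heart of the argument.
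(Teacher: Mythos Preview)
The statement you are trying to prove is labeled a \emph{conjecture} in the paper, and the paper does not prove it in general. It establishes only specific instances --- $\mathfrak{g}$ of type $G_2$ or $F_4$ with arbitrary nonempty $J$, type $C_n$ with $J = \{\alpha_n\}$ (quantum symmetric matrices), and one $B_6$ example --- and explicitly presents the general statement as open. So there is no ``paper's own proof'' to compare against; you are sketching a strategy for an open problem.

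That said, your outline and the paper's methods diverge substantially. The paper does not attempt the quantum-torus rigidity template from Yakimov's proof of the Andruskiewitsch--Dumas conjecture for general $J$. Instead, in each special case it equips ${\mathcal U}_q(\mathfrak{n}_J)$ with an $\mathbb{N}$-grading via a suitable coweight, verifies that the core ${\mathcal C}(R)$ equals $R$ so that every automorphism is graded (Theorem~\ref{when Aut is Gr-Aut}), and then uses commutation relations with the normal elements $\Theta_i$ (Theorems~\ref{normal gen, fixed}--\ref{root vector, fixed, 2}) together with explicit defining relations to force each degree-one Lusztig root vector to a scalar multiple of itself. This is case-by-case and computational, not structural.

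A couple of technical slips in your sketch: the height-one $H$-prime ideals of ${\mathcal U}_q(\mathfrak{n}_J)$ are indexed by $\operatorname{supp}(w_J)$, which for parabolic $w_J$ is the full index set $\mathbf{I}$, not $J$; so your permutation map lands in $\operatorname{Sym}(\mathbf{I})$. More seriously, the ``weight-uniqueness'' obstruction you flag is real and is precisely what the paper is unable to handle uniformly --- indeed, Proposition~\ref{prop} lists only a sparse collection of $(\mathfrak{g},J)$ for which the paper's commutation criteria suffice, and even there the paper must separately verify that the core is the whole algebra and that the algebra is generated in degree one. Your proposal correctly locates the hard step but does not resolve it; as things stand, neither does the paper.
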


\noindent We remark that ${\mathcal U}_q \left( \mathfrak{n}_J \right)$ is
isomorphic to ${\mathcal A}_q(\mathbb{K}^3)$ in only two situations: (1)
$\mathfrak{g} = \mathfrak{sl}(4)$ and $J = \left\{ \alpha_1 \right\}$, or (2)
$\mathfrak{g} = \mathfrak{sl}(4)$ and $J = \left\{ \alpha_3 \right\}$.

Conjecture \ref{conj} above has been resolved in several cases. As mentioned
above, the proof of the Launois-Lenagan conjecture covers the situation when
$\mathfrak{g}$ is of type $A_n$ and $J$ is a singleton, whereas the
Andruskiewitsch-Dumas conjecture handles the case when $\mathfrak{g}$ is
arbitrary and $J$ is the full set of simple roots.  We prove Conjecture
\ref{conj} in some other situations, including when the underlying Lie algebra
$\mathfrak{g}$ is of type $F_4$ or $G_2$.

\begin{theorem}

    If $\mathfrak{g}$ is the Lie algebra of type $F_4$ and $J$ is a nonempty
    subset of simple roots of $\mathfrak{g}$, then
    $\operatorname{Aut}({\mathcal U}_q(\mathfrak{n}_J)) \cong
    \left(\mathbb{K}^\times\right)^4$.

\end{theorem}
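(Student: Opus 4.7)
The plan is to reduce the theorem to showing that every automorphism acts diagonally on the Lusztig PBW root vectors. The Dynkin diagram of $F_4$ admits no nontrivial symmetry: the graph involution $\alpha_1 \leftrightarrow \alpha_4$, $\alpha_2 \leftrightarrow \alpha_3$ reverses the arrow on the double edge and is therefore not a Dynkin automorphism. Consequently $\operatorname{Dynkin-Aut}_J(\mathfrak{g}) = \{1\}$ for every nonempty $J \subseteq \{\alpha_1,\alpha_2,\alpha_3,\alpha_4\}$, and the predicted group degenerates to $(\mathbb{K}^\times)^4$. The embedding of this torus into $\operatorname{Aut}(\mathcal{U}_q(\mathfrak{n}_J))$ comes for free from the standard root lattice grading; only the reverse containment requires argument.

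I would proceed by a case analysis on the fifteen nonempty subsets $J$. When $J$ is the full simple-root set, $\mathcal{U}_q(\mathfrak{n}_J) = \mathcal{U}_q^+(\mathfrak{g})$ and the conclusion follows from Yakimov's resolution of the Andruskiewitsch-Dumas conjecture in \cite{Y3}. For each of the remaining fourteen proper subsets, one first checks directly that $\mathcal{U}_q(\mathfrak{n}_J) \not\cong \mathcal{A}_q(\mathbb{K}^3)$ (this exception is exclusive to $\mathfrak{sl}(4)$, by the remark after Conjecture \ref{conj}) and then applies the general rigidity results developed in the earlier sections of the paper. The framework there asserts, roughly, that an automorphism of $\mathcal{U}_q(\mathfrak{n}_J)$ must preserve the root lattice grading up to a Dynkin symmetry fixing $J$. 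Since $F_4$ admits none, every automorphism $\phi$ is then strictly graded.

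From graded-ness one concludes by verifying that each graded component of $\mathcal{U}_q(\mathfrak{n}_J)$ containing a PBW generator $E_\beta$ is one-dimensional, so that $\phi(E_\beta) = c_\beta E_\beta$ for some scalar $c_\beta \in \mathbb{K}^\times$, and then using the quantum Serre type relations among the generators to cut the number of independent parameters down to four, one per simple root. The main obstacle is verifying case-by-case that the rigidity hypotheses are satisfied for every one of the fourteen intermediate parabolics, together with the one-dimensionality check on the relevant homogeneous components, which amounts to a finite but nontrivial calculation inside the $F_4$ root system. I expect that a few of the low-dimensional subcases, where the nilradical has few positive roots and the PBW algebra is closest to low-dimensional quantum affine space, will fall outside the clean reach of the general rigidity theorem and require bespoke treatment; most of the proof's technical bulk will live there.
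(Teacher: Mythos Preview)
Your overall shape is right (diagonality first, then torus identification via the CGL rank), but the central step contains a genuine gap. You assert that the paper's rigidity machinery shows every automorphism preserves the \emph{root lattice} ($Q$-)grading, after which one-dimensionality of each $Q$-component containing a root vector would finish immediately. That is not what the machinery gives. Theorem~\ref{when Aut is Gr-Aut} only yields preservation of the $\mathbb{N}$-grading induced by the coweight $\lambda = \sum_{j\in J}\varpi_j^\vee$, and in that grading the degree-$1$ piece $R_1$ is typically far from one-dimensional: for $J=\{\alpha_2\}$ it contains twelve Lusztig root vectors, for $J=\{\alpha_1\}$ fourteen, and so on. So your one-dimensionality check is for the wrong grading, and the passage from ``$\mathbb{N}$-graded'' to ``$Q$-graded'' (equivalently, diagonal) is precisely the content of the theorem, not a formality.

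The paper's actual argument (Theorem~\ref{F4: diagonal}) does this step explicitly for each of the fourteen proper $J$. Having established $\mathbb{N}$-gradedness, it exploits the $\phi$-invariant subsets $C_d^m$ and $\gamma_{d,\ell}^m$ of $R_1$ built from commutation data with the normal subalgebra $\mathcal{N}(R)$. For most degree-$1$ generators $x_i$, a suitable intersection of these sets collapses to $\mathbb{K}x_i$ or $\mathbb{K}^\times x_i$, forcing $\phi(x_i)\in\mathbb{K}^\times x_i$. In the remaining cases one is left with a $2$- or $3$-dimensional $\phi$-invariant subspace, and the off-diagonal coefficients are killed by applying $\phi$ to carefully chosen $q$-commutator identities (supplied by Lemmas~\ref{x's as commutators} and~\ref{y's as commutators}). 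This ad hoc work is needed in the majority of the fourteen cases, not merely in ``a few low-dimensional subcases''. Once diagonality is in hand, the identification $\operatorname{Aut}\cong(\mathbb{K}^\times)^4$ is exactly as you say, via \cite[Theorems~5.3, 5.5]{GY2} and the CGL rank.
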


\begin{theorem}

    If $\mathfrak{g}$ is the Lie algebra of type $G_2$ and $J$ is a nonempty
    subset of simple roots of $\mathfrak{g}$, then
    $\operatorname{Aut}({\mathcal U}_q(\mathfrak{n}_J)) \cong
    \left(\mathbb{K}^\times\right)^2$.

\end{theorem}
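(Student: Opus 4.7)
The plan is to treat the three possible subsets $J$ of simple roots separately, exploiting the fact that $G_2$ has no nontrivial Dynkin diagram symmetries, so that $\operatorname{Dynkin-Aut}_J(\mathfrak{g})$ is trivial and the target group is precisely $(\mathbb{K}^\times)^2$ in every case. When $J = \{\alpha_1, \alpha_2\}$ is the full set of simple roots, the algebra ${\mathcal U}_q(\mathfrak{n}_J)$ coincides with ${\mathcal U}_q^+(\mathfrak{g})$, and the statement is a special case of the Andruskiewitsch--Dumas conjecture, proved in full generality by Yakimov in \cite{Y3}. Thus only the two maximal parabolic cases $J = \{\alpha_1\}$ and $J = \{\alpha_2\}$ require new arguments.

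In each of these two cases, ${\mathcal U}_q(\mathfrak{n}_J)$ is generated, after fixing a reduced expression for $w = w_o^J w_o$, by five Lusztig PBW root vectors $E_{\beta_1},\dots,E_{\beta_5}$ corresponding to the five positive roots of $G_2$ that involve $\alpha_1$ (respectively $\alpha_2$), together with the Levendorski\u{\i}--Soibelman straightening relations between them. The inclusion $(\mathbb{K}^\times)^2 \hookrightarrow \operatorname{Aut}({\mathcal U}_q(\mathfrak{n}_J))$ coming from scaling weight vectors by characters of the root lattice is immediate, so the content of the theorem is the reverse containment, namely that every automorphism is diagonal with respect to the PBW basis.

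For this, I would apply the rigidity machinery developed earlier in the paper. Its essential consequence is that any automorphism $\phi$ must respect the root-lattice grading, so each $\phi(E_{\beta_i})$ is a $\mathbb{K}$-linear combination of PBW monomials of total root weight $\beta_i$. A short enumeration in the $G_2$ root system lists the very few monomials of this weight that are not simply $E_{\beta_i}$ itself, obtained from decompositions $\beta_i = \gamma_1 + \cdots + \gamma_m$ into roots appearing in $\mathfrak{n}_J$. The core of the argument is then to rule out these correction terms, which I expect to be the principal obstacle: the $G_2$ straightening relations are genuinely doubly-laced and contain several nontrivial lower-order terms, so substituting the candidate images and enforcing compatibility with each relation requires a careful case-by-case analysis. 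Because there are only five generators and rank two, this computation is nevertheless tractable.

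Once every correction term has been ruled out, the automorphism is forced to act by $\phi(E_{\beta_i}) = c_i E_{\beta_i}$ for scalars $c_i \in \mathbb{K}^\times$. A final check that the admissible tuples $(c_1,\dots,c_5)$ are exactly those arising from characters of the root lattice---equivalent to saying that each $c_i$ is determined by two free parameters corresponding to $\alpha_1$ and $\alpha_2$ via $c_i = a_1^{n_1(\beta_i)} a_2^{n_2(\beta_i)}$---then identifies $\operatorname{Aut}({\mathcal U}_q(\mathfrak{n}_J))$ with $(\mathbb{K}^\times)^2$, completing the proof.
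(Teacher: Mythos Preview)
Your treatment of $J=\{\alpha_1,\alpha_2\}$ via the Andruskiewitsch--Dumas conjecture matches the paper, and the final identification of the diagonal automorphisms with $(\mathbb{K}^\times)^2$ is correct (the paper invokes \cite[Theorems 5.3 and 5.5]{GY2} for this).

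The gap is in your appeal to the ``rigidity machinery''. Theorem~\ref{when Aut is Gr-Aut} only shows that every automorphism preserves the $\mathbb{N}$-grading induced by the coweight $\sum_{j\in J}\varpi_j^\vee$; it does \emph{not} give preservation of the root-lattice grading. These differ in exactly the way that matters here. For $J=\{\alpha_1\}$ the degree-one component is spanned by $x_1$ (root $\alpha_1$) and $x_5$ (root $\alpha_1+\alpha_2$), so a priori a graded automorphism may send $x_1$ to $ax_1+bx_5$. For $J=\{\alpha_2\}$ there are four degree-one root vectors $x_2,x_3,x_5,x_6$ of pairwise distinct root weights, so the possible mixing is worse. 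Your assertion that $\phi(E_{\beta_i})$ lies in the $\beta_i$-weight space is therefore unjustified, and proving it is essentially the diagonality statement you are after.

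The paper closes this gap by a different route. After Theorem~\ref{when Aut is Gr-Aut} gives $\mathbb{N}$-gradedness, Theorem~\ref{normal gen, fixed} forces $\phi(\Theta_i)\in\mathbb{K}^\times\Theta_i$ (the $\mathbb{N}$-degrees of $\Theta_1,\Theta_2$ admit no nontrivial decompositions), and then Theorems~\ref{root vector, fixed} and~\ref{root vector, fixed, 2} use the fact that distinct degree-one root vectors have distinct $q$-commutation exponents with the $\Theta_i$ to separate them. Your proposed alternative of substituting into the straightening relations could be made to work, but you would have to allow \emph{all} degree-one root vectors as correction terms, not just those of the same root weight, so the case analysis is larger than you describe and you have not indicated how it goes through.
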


We also develop some general theorems (Theorems \ref{when Aut is Gr-Aut},
\ref{C gamma, fixed}, \ref{normal gen, fixed}, \ref{root vector, fixed},
\ref{root vector, fixed, 2}) regarding automorphisms of quantum Schubert cell
algebras that can be applied to help determine the automorphism groups of
several other quantized nilradicals. More generally, quantum Schubert cell
algebras belong to a larger family of algebras called Cauchon-Goodearl-Letzter
(CGL) extensions, which originated in the works \cite{C, Goodearl Letzter}.
Some general techniques have been developed in \cite{GY1} to study
automorphisms of CGL extensions. These techniques utilize properties of some
key subalgebras of a CGL extension $R$, namely the normal subalgebra ${\mathcal
N}(R)$ (the subalgebra generated by the normal elements), and the core
${\mathcal C}(R)$. Basically, the larger the core ${\mathcal C}(R)$, the more
control one has over automorphisms \cite[Theorem 4.2]{GY2}. One has the most
control over automorphisms when the core coincides with the entire algebra.
Most quantized nilradicals appear to have this property.

Several other instances of quantized nilradicals appear in the literature,
particularly when the nilradical $\mathfrak{n}_J$ is abelian. For example, when
the underlying Lie algebra $\mathfrak{g}$ is of type $C_n$ and $J = \left\{
\alpha_n \right\}$, the corresponding quantized nilradical ${\mathcal
U}_q(\mathfrak{n}_J)$ is the algebra of quantum $n \times n$ symmetric matrices
\cite{Kamita, Noumi}. When $\mathfrak{g}$ is the Lie algebra of type $D_n$ and
$J = \left\{\alpha_{n - 1}\right\}$ or $J = \left\{\alpha_n\right\}$,
${\mathcal U}_q(\mathfrak{n}_J)$ is the algebra of quantum antisymmetric
matrices \cite{Strickland}.  If $\mathfrak{g}$ is of type $B_n$ and $J =
\left\{ \alpha_1 \right\}$, ${\mathcal U}_q(\mathfrak{n}_J)$ is the
odd-dimensional quantum Euclidean space, which was introduced by Faddeev,
Reshetikhin, and Takhtadzhyan \cite[Definition 12]{FRT}. Simplified relations
for this algebra appear in \cite[Sections 2.1-2.2]{Musson}.  If $\mathfrak{g}$
is of type $D_n$ and $J = \left\{ \alpha_1 \right\}$, ${\mathcal
U}_q(\mathfrak{n}_J)$ is the even-dimensional quantum Euclidean space
\cite{FRT, Musson}. The automorphism groups of even and odd-dimensional quantum
Euclidean space are already known to satisfy Conjecture \ref{conj}
\cite[Example 4. 10]{GY1}.  We prove Conjecture \ref{conj} holds when
${\mathcal U}_q(\mathfrak{n}_J)$ is the algebra of quantum symmetric matrices.

\begin{theorem}

    If $\mathfrak{g}$ is the Lie algebra of type $C_n$ with $n > 1$ and $J =
    \left\{\alpha_n\right\}$ (i.e. ${\mathcal U}_q(\mathfrak{n}_J)$ is the
    algebra of $n\times n$ quantum symmetric matrices), then
    $\operatorname{Aut}({\mathcal U}_q(\mathfrak{n}_J)) \cong
    \left(\mathbb{K}^\times\right)^n$.

\end{theorem}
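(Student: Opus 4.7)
The plan is to apply the general rigidity machinery assembled in Theorems \ref{when Aut is Gr-Aut}, \ref{C gamma, fixed}, \ref{normal gen, fixed}, \ref{root vector, fixed}, and \ref{root vector, fixed, 2}. For $\mathfrak{g}$ of type $C_n$ with $n > 1$ the Dynkin diagram admits no nontrivial symmetries, so Conjecture \ref{conj} predicts $\operatorname{Aut}({\mathcal U}_q(\mathfrak{n}_J)) \cong (\mathbb{K}^\times)^n$. The inclusion of the torus in the automorphism group is routine, so the content of the theorem is the reverse inclusion.

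First I would fix a reduced expression for $w = w_o^{\{\alpha_n\}} w_o$ and record the associated CGL presentation of $R := {\mathcal U}_q(\mathfrak{n}_J)$ as an iterated Ore extension. The PBW generators $X_\beta$ are indexed by the positive roots appearing in $\mathfrak{n}_J$, namely $\beta = \varepsilon_i + \varepsilon_j$ with $1 \leq i \leq j \leq n$, and may be identified with the entries $z_{ij}$ of a quantum $n \times n$ symmetric matrix. A crucial structural observation is that these weights $\varepsilon_i + \varepsilon_j$ are pairwise distinct in the root lattice $\mathbb{Z}^n$. Consequently any automorphism of $R$ that respects the natural $\mathbb{Z}^n$-grading is forced to scale each $X_\beta$ by some element of $\mathbb{K}^\times$ and therefore lies in the torus.

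It thus suffices to prove that every automorphism of $R$ is graded, which I would establish by applying Theorem \ref{when Aut is Gr-Aut}. Its hypotheses require a sufficiently large core ${\mathcal C}(R)$. The quantum symmetric matrix algebra is known to contain a rich supply of homogeneous normal elements --- in particular the quantum principal minors $D_1, \ldots, D_n$, whose weights $2(\varepsilon_1 + \cdots + \varepsilon_k)$ span a rank-$n$ sublattice, together with other quantum determinants of sub-matrices. Using these in conjunction with the fixed root vector and normal generator theorems (Theorems \ref{C gamma, fixed}, \ref{normal gen, fixed}, \ref{root vector, fixed}, and \ref{root vector, fixed, 2}) applied inductively along the CGL chain, one verifies that ${\mathcal C}(R)$ is large enough to force every automorphism to preserve the grading.

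The main obstacle, as I see it, is the concrete verification that the core is indeed large enough, and the step-by-step application of the rigidity theorems along the CGL chain for $w_o^{\{\alpha_n\}} w_o$ in type $C_n$. At each stage one must either exhibit a new homogeneous prime normal element or invoke one of the fixed root vector theorems to constrain the image of the current PBW generator; this requires delicate bookkeeping over the reduced expression and explicit use of the quantum minor relations. Once the reduction to graded automorphisms is complete, the distinctness of the weights $\varepsilon_i + \varepsilon_j$ on the generators closes the argument at no additional cost, yielding $\operatorname{Aut}(R) \cong (\mathbb{K}^\times)^n$.
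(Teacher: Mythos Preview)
There is a genuine gap in your argument at the step where you pass from ``every automorphism is graded'' to ``every automorphism scales each generator.'' Theorem \ref{when Aut is Gr-Aut} concerns only the $\mathbb{N}$-grading induced by a single coweight $\lambda$ (here $\lambda = \varpi_n^\vee$), under which \emph{all} of the generators $x_{ij}$ have degree $1$. It does \emph{not} assert that automorphisms preserve the full $Q$-grading (your ``natural $\mathbb{Z}^n$-grading''). So the fact that the weights $\varepsilon_i + \varepsilon_j$ are pairwise distinct in $Q$ is irrelevant at this stage: after invoking Theorem \ref{when Aut is Gr-Aut} you only know that $\phi$ preserves the $\binom{n+1}{2}$-dimensional space $R_1$, and the entire content of the theorem lies in showing that $\phi$ acts diagonally on it.

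The paper carries this out in two steps. First, Proposition \ref{Proposition, q-symmetric} shows $\phi(\Theta_k)\in\mathbb{K}^\times\Theta_k$ for every $k$ (using that $\dim(\mathcal{N}_J)_1=1$, commutation of $x_{nn}$ with the $\Theta_k$'s, and centrality of $\Theta_n$). Then Theorem \ref{root vector, fixed, 2} separates the $x_{ij}$ with $i+j\neq n+1$ via their distinct commutation exponents $\langle e_i+e_j,(1+w_J)\varpi_k\rangle$ against the $\Theta_k$. However --- and this is the subtlety your outline misses entirely --- the generators $x_{ij}$ with $i+j=n+1$ all commute with \emph{every} element of the normal subalgebra, so Theorems \ref{root vector, fixed} and \ref{root vector, fixed, 2} cannot distinguish them. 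The paper handles this residual $\phi$-invariant subspace by an ad hoc argument: for $y$ in this subspace one computes $\dim\{x\in R_1 : yx = qxy\}$ and checks that this dimension characterizes each anti-diagonal generator up to scalar. Your proposal does not anticipate this obstruction, and the vague ``inductive application along the CGL chain'' does not correspond to how the cited theorems actually operate.
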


Other examples of quantized nilradicals ${\mathcal U}_q(\mathfrak{n}_J)$ have
been studied for cases when $\mathfrak{n}_J$ is non-abelian.  For instance, the
quantized nilradicals when $\mathfrak{g}$ is of type $A_n$ and $J$ is an
arbitrary set of simple roots were studied in \cite{JJ}, where it was shown
that ${\mathcal U}_q(\mathfrak{n}_J)$ is isomorphic to an algebra of
coinvariants. With this, ${\mathcal U}_q(\mathfrak{n}_J)$ can be viewed as a
deformation of the coordinate ring of a unipotent subgroup of a parabolic
subgroup of $SL(n + 1)$.

Each algebra ${\mathcal U}_q(\mathfrak{n}_J)$ can be equipped with a
$\mathbb{N}$-grading such that, with respect to this grading, ${\mathcal
U}_q(\mathfrak{n}_J)$ is connected and locally finite.  We apply the results
developed in Theorems \ref{when Aut is Gr-Aut}, \ref{C gamma, fixed},
\ref{normal gen, fixed}, \ref{root vector, fixed}, \ref{root vector, fixed, 2}
to illustrate that, for certain cases of $\mathfrak{g}$ and $J$, every
automorphism of ${\mathcal U}_q(\mathfrak{n}_J)$ that preserves the
$\mathbb{N}$-grading acts diagonally on the graded component of degree one (see
Proposition \ref{prop}).

We choose the first case listed in Proposition \ref{prop}, namely when
$\mathfrak{g}$ is of type $B_6$ and $J = \left\{ \alpha_2, \alpha_5 \right\}$,
and completely determine the automorphism group of the corresponding quantized
nilradical. We show here that every automorphism preserves the
$\mathbb{N}$-grading by applying the results of \cite[Theorem 4.2]{GY2}
involving the core of ${\mathcal U}_q(\mathfrak{n}_J)$. The same steps can be
applied to other cases listed in Proposition \ref{prop}.

\begin{theorem}

    If $\mathfrak{g}$ is the Lie algebra of type $B_6$ and $J =
    \left\{\alpha_2, \alpha_5\right\}$, then $\operatorname{Aut}({\mathcal
    U}_q(\mathfrak{n}_J)) \cong \left(\mathbb{K}^\times\right)^6$.

\end{theorem}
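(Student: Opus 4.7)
The plan is to combine the general rigidity theorems developed earlier in the paper with an explicit computation of the core. The automorphism group obviously contains a rank-$6$ torus of characters of the root lattice $Q(\mathfrak{g})$ acting by $x_\beta \mapsto \chi(\beta)x_\beta$ on each quantum root vector $x_\beta \in \mathcal{U}_q(\mathfrak{n}_J)$, and since the Dynkin diagram of $B_6$ has trivial symmetry group, Conjecture \ref{conj} predicts this torus to be everything. I would prove the theorem in three steps, in order: (i) every automorphism preserves the natural $\mathbb{N}$-grading; (ii) every graded automorphism acts diagonally on the degree-one component $V_1$; (iii) the resulting diagonal group is exactly $(\mathbb{K}^\times)^6$.

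Step (i) is the crux and is carried out via \cite[Theorem 4.2]{GY2}, which forces any automorphism to preserve the grading once the core $\mathcal{C}(\mathcal{U}_q(\mathfrak{n}_J))$ coincides with the entire algebra. To show that the core is everything, I would fix a PBW presentation of $\mathcal{U}_q(\mathfrak{n}_J)$ as a CGL extension associated with a suitable reduced expression for $w_o^J w_o$, identify the sequence of homogeneous prime divisors of the normal generators together with the corresponding normal subalgebra $\mathcal{N}(R)$, and then verify inductively that each PBW generator $x_\beta$ (the quantum root vector for a positive root $\beta$ whose support meets $\{\alpha_2,\alpha_5\}$) lies in $\mathcal{C}(R)$.

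Step (ii) is essentially the content of Proposition \ref{prop}, whose first listed case is precisely $(\mathfrak{g},J) = (B_6, \{\alpha_2,\alpha_5\})$; it yields the required diagonal action on $V_1$. For Step (iii) I would use that $\mathcal{U}_q(\mathfrak{n}_J)$ is connected graded and generated as an algebra by $V_1$, so a diagonal automorphism $x_\beta \mapsto a_\beta x_\beta$ on a basis of $V_1$ of quantum root vectors is determined by the scalars $(a_\beta)$; compatibility with the Levendorskii-Soibelman straightening relations then forces $a_\beta = \chi(\beta)$ for a single character $\chi$ of $Q(\mathfrak{g})$, giving exactly the rank-$6$ torus. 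The main obstacle will be Step (i): because $\mathfrak{n}_J$ is non-abelian for this choice of $J$, the quantum straightening relations contain genuine lower-order correction terms, so the core identity is not formal and requires a careful choice of PBW ordering together with explicit tracking of the chain of normal elements at each stage of the CGL extension; once this is in place the rest of the argument is a direct application of the general results cited above.
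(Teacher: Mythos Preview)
Your three-step outline --- (i) core equals everything so automorphisms are graded, (ii) graded automorphisms act diagonally on degree one via Proposition~\ref{prop}, (iii) diagonal automorphisms form the rank-$6$ torus --- is exactly the paper's strategy, and Step~(iii) is handled in the paper by citing \cite[Theorems~5.3 and~5.5]{GY2} rather than by rechecking the straightening relations directly.

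One point deserves correction: you substantially overestimate the work in Step~(i). The core computation does not require tracking chains of normal elements through the CGL filtration. By \eqref{eta, reduced expr} and \eqref{definition PxR}, $P_x(R)$ consists of those positions $k$ in the chosen reduced word for $w_o^J w_o$ at which the letter $s_{i_k}$ occurs only once; for the length-$31$ reduced expression the paper writes down, every simple reflection $s_1,\dots,s_6$ appears at least twice, so $P_x(R)=\varnothing$ and $\mathcal{C}(R)=R$ immediately. What does require verification, and what your outline omits, are the \emph{other} hypotheses of Theorem~\ref{when Aut is Gr-Aut}: that the algebra is generated by its degree-one component (the paper checks this by expressing each higher-degree root vector as a $q$-commutator of lower ones), and that each degree-one root vector $x_i$ satisfies a genuine $q$-commutation $x_i x_j = \kappa x_j x_i$ with $\kappa\neq 1$ for some $j$ (the paper verifies this by observing adjacency patterns in the reduced word). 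These are routine but must be said; the core identity alone is not sufficient input for Theorem~\ref{when Aut is Gr-Aut}.
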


Ideally, we would like to eventually develop a theory sufficient to completely
determine the automorphism group of ${\mathcal U}_q \left( \mathfrak{n}_J
\right)$ in all cases. A more general endeavor is to develop a theory
sufficient to describe the automorphism groups of quantum Schubert cell
algebras ${\mathcal U}_q^{\pm}[w]$. Interestingly, Ceken, Palmieri, Wang, and
Zhang \cite{CPWZ1} describe a family of algebras such that the automorphism
group of each algebra in this family is isomorphic to the semidirect product of
an algebraic torus and a finite group.  While quantum Schubert cells don't
belong to this family of algebras, in many instances their automorphism groups
seem to have this form.  In a related work, one could attempt to find necessary
and sufficient conditions on $w$ so that $\operatorname{Aut} \left( {\mathcal
U}_q^{\pm}[w] \right)$ is isomorphic to $\left( \mathbb{K}^\times \right)^n
\rtimes G$ for some natural number $n\in \mathbb{N}$ and finite group $G$.

\section{The algebra $\mathbf{{\mathcal U}_q(\mathfrak{g})}$}

Let $\mathfrak{g}$ be a finite dimensional complex simple Lie algebra of rank
$r$. Define the index set $\mathbf{I} := \left\{1,2,\dots r\right\}$, and let
$\Pi = \{\alpha_i\}_{i \in \mathbf{I}}$ be a set of simple roots of
$\mathfrak{g}$ with respect to a fixed Cartan subalgebra $\mathfrak{h} \subset
\mathfrak{g}$ such that the labelling of the simple roots agrees with the
labelling in \cite[Section 12.1]{Humphreys}.

The root system of $\mathfrak{g}$ will be denoted by $\Delta$, and the sets of
positive and negative roots will be denoted by $\Delta_+$ and $\Delta_-$,
respectively.  The corresponding triangular decomposition of $\mathfrak{g}$
will be denoted by
\[
    \mathfrak{g} = \mathfrak{n}^- \oplus \mathfrak{h} \oplus \mathfrak{n}^+,
\]
where
\[
    \mathfrak{n}^{\pm} := \bigoplus_{\alpha \in \Delta_\pm}
    \mathfrak{g}_\alpha, \hspace{10mm} \mathfrak{g}_\alpha := \left\{ x \in
    \mathfrak{g} \mid [h,x] = \alpha(h)x \text{ for all }h \in \mathfrak{h}
    \right\}.
\]
As usual, let $Q = \oplus_{i \in \mathbf{I}} \mathbb{Z}\alpha_i$ be the root
lattice of $\mathfrak{g}$, and let $\langle \,\, ,\, \rangle : Q \times Q \to
\mathbb{Z}$ be a symmetric nondegenerate ad-invariant $\mathbb{Z}$-bilinear
form, normalized so that $\langle \alpha, \alpha \rangle = 2$ for short roots
$\alpha$. Define
\[
    c_{ij} := \frac{2 \langle \alpha_i , \alpha_j \rangle}{\langle \alpha_i,
    \alpha_i \rangle}, \hspace{5mm} (i,j \in \mathbf{I}),
\]
and let $(c_{ij})_{i,j \in \mathbf{I}}$ be the associated Cartan matrix of
$\mathfrak{g}$. We will denote the simple reflections in the Weyl group $W$ of
$\mathfrak{g}$ by
\[
    s_i, \hspace{5mm} (i \in \mathbf{I}).
\]
The corresponding generators of the braid group ${\mathcal B}_{\mathfrak{g}}$
of $\mathfrak{g}$ will be denoted by
\[
    T_i, \hspace{5mm} (i \in \mathbf{I}).
\]

Let $q \in \mathbb{K}$ be nonzero and not a root of unity, and let $q_i =
q^{\langle \alpha_i, \alpha_i \rangle/ 2}$ for $i \in \mathbf{I}$. As usual,
define
\[
        \widehat{q} := q - q^{-1}.
\]
For a natural number $n\in \mathbb{N}$, define
\[
    [n]_{q_i} := \frac{q_i^n - q_i^{-n}}{q_i - q_i^{-1}}, \hspace{10mm}
    [n]_{q_i}! := [n]_{q_i} [n - 1]_{q_i} \cdots [1]_{q_i}.
\]
The quantized universal enveloping algebra ${\mathcal U}_q(\mathfrak{g})$ is an
associative $\mathbb{K}$-algebra with standard Chevalley generators
\[
    K_\mu, E_i, F_i, \hspace{5mm} (\mu \in Q, i \in \mathbf{I}).
\]
There is a standard $Q$-gradation on the algebra ${\mathcal
U}_q(\mathfrak{g})$,
\[
    {\mathcal U}_q(\mathfrak{g}) = \bigoplus_{\lambda \in Q} {\mathcal
    U}_q(\mathfrak{g})_\lambda, \hspace{5mm} {\mathcal U}_q(\mathfrak{g})_\mu
    := \left\{ u\in {\mathcal U}_q(\mathfrak{g}) : K_\lambda u = q^{\langle
    \lambda, \mu \rangle}uK_\lambda \text{ for all } \lambda \in Q \right\}.
\]
With respect to this grading, the Chevalley generators are homogeneous
elements. In particular,
\[
    E_i \in {\mathcal U}_q(\mathfrak{g})_{\alpha_i}, \hspace{5mm} F_i \in
    {\mathcal U}_q(\mathfrak{g})_{-\alpha_i}, \hspace{5mm} K_\mu \in {\mathcal
    U}_q(\mathfrak{g})_0, \hspace{10mm} (i\in \mathbf{I}, \mu \in Q).
\]
We will state the defining relations of ${\mathcal U}_q(\mathfrak{g})$, but
first we find it convenient to introduce the abbreviation
\begin{equation}
    \label{def, q-commutators}
    [x, y] := x y - q^{\langle \mu, \eta \rangle} y x, \hspace{10mm} x\in
    {\mathcal U}_q(\mathfrak{g})_{\mu}, \hspace{3mm} y \in {\mathcal
    U}_q(\mathfrak{g})_{\eta},
\end{equation}
for \textit{$q$-commutators}. We will adopt this notation throughout.  Next,
for every homogeneous $x \in {\mathcal U}_q(\mathfrak{g})_\mu$, we define the
linear operator $\operatorname{ad}_qx:{\mathcal U}_q(\mathfrak{g}) \to
{\mathcal U}_q(\mathfrak{g})$ by the condition that
\[
    \left(\operatorname{ad}_q x\right)(y) = [x, y],
\]
for every homogeneous element $y \in {\mathcal U}_q(\mathfrak{g})$.  With this,
the defining relations of ${\mathcal U}_q(\mathfrak{g})$ are
\[
    \begin{split}
        &K_0 = 1, \hspace{10mm} K_\mu K_\lambda = K_{\lambda + \mu},
        \\
        &K_\mu E_i = q^{\langle \mu, \alpha_i \rangle} E_i K_\mu, \hspace{10mm}
        K_\mu F_i = q^{-\langle \mu, \alpha_i \rangle} F_i K_\mu,
        \\
        &E_iF_j - F_jE_i = \delta_{ij} \frac{K_{\alpha_i} - K_{-\alpha_i}}{q_i
            - q_i^{-1}}
    \end{split}
\]
\noindent together with the $q$-Serre relations
\[
    \begin{split}
        &(\operatorname{ad}_qE_i)^{1 - c_{ij}} (E_j) = 0, \hspace{10mm}
        (\text{for all $i \neq j$}),
        \\
        &(\operatorname{ad}_qF_i)^{1 - c_{ij}} (F_j) = 0, \hspace{10mm}
        (\text{for all $i \neq j$}).
    \end{split}
\]
The algebra ${\mathcal U}_q(\mathfrak{g})$ has a triangular decomposition,
\[
    {\mathcal U}_q(\mathfrak{g}) \cong {\mathcal U}_q(\mathfrak{n}^-) \otimes
    {\mathcal U}_q(\mathfrak{h}) \otimes {\mathcal U}_q(\mathfrak{n}^+),
\]
where ${\mathcal U}_q(\mathfrak{n}^-)$, ${\mathcal U}_q(\mathfrak{h})$, and
${\mathcal U}_q(\mathfrak{n}^+)$ are the subalgebras of ${\mathcal
U}_q(\mathfrak{g})$ generated by the $F$'s, $K$'s, and $E$'s respectively.

\subsection{A $\mathbb{Z}$-grading on ${\mathcal U}_q(\mathfrak{g})$}\label{Z
grading section} The fundamental coweights of $\mathfrak{g}$ will be denoted by
$\varpi_i^\vee$ ($i\in \mathbf{I}$). They are determined by the conditions
$\langle \varpi_i^\vee, \alpha_j\rangle = \delta_{ij}$ for all $i,j\in
\mathbf{I}$. We will let $P^\vee = \oplus_{i\in
\mathbf{I}}\mathbb{Z}\varpi_i^\vee$ be the coweight lattice of $\mathfrak{g}$.
For every integral coweight $\lambda\in P^\vee$ (i.e.  $\langle \lambda,
\alpha_i \rangle \in \mathbb{Z}$ for every $i \in \mathbf{I}$), there is an
associated $\mathbb{Z}$-grading on ${\mathcal U}_q(\mathfrak{g})$ given by
assigning the degree $\langle \lambda, \mu \rangle$  to every $Q$-homogeneous
element of degree $\mu$.  To make the distinction between the $Q$-grading and
the $\mathbb{Z}$-grading, we will write
\[
    \operatorname{deg}_Q(u) = \mu, \hspace{5mm}
    \operatorname{deg}_\mathbb{Z}(u) = n,
\]
respectively, to mean that $u \in {\mathcal U}_q(\mathfrak{g})$ is a
$Q$-homogeneous element of degree $\mu \in Q$ and a $\mathbb{Z}$-homogeneous
element of degree $n$.  Technically, $\operatorname{deg}_\mathbb{Z}$ depends on
the coweight $\lambda$. However, we adopt the notation
$\operatorname{deg}_\mathbb{Z}$ rather than $\operatorname{deg}_\lambda$
whenever the choice of coweight $\lambda$ is clear from the context.

\subsection{Lusztig symmetries of $\mathbf{{\mathcal U}_q(\mathfrak{g})}$} In
\cite[Section 37.1.3]{L}, Lusztig defines an action of the braid group
${\mathcal B}_{\mathfrak{g}}$ via algebra automorphisms on ${\mathcal
U}_q(\mathfrak{g})$.  In fact, Lusztig defines the symmetries $T_{i,1}^\prime$,
$T_{i,-1}^{\prime}$, $T_{i,1}^{\prime\prime}$, and $T_{i,-1}^{\prime\prime}$.
By \cite[Proposition 37.1.2]{L}, these are automorphisms of ${\mathcal
U}_q(\mathfrak{g})$, while by \cite[Theorem 39.4.3]{L} they satisfy the braid
relations.  For short, we will adopt the abbreviation $T_i :=
T^{\prime\prime}_{i,1}$. With this convention, Lusztig's symmetries are given
by the formulas
\[
    \begin{split}
        &T_i(K_\mu) = K_{s_i(\mu)},
        \\
        &T_i(E_j) =
        \begin{cases}
            - F_i K_{\alpha_i}, &
            (i = j),
            \\
            \left(\operatorname{ad}_q E_i \right)^{(-c_{ij})} (E_j), &
            (i \neq j),
            \\
        \end{cases}
        \\
        &T_i(F_j) =
        \begin{cases}
            - K_{-\alpha_i} E_i, &
            (i = j),
            \\
            \left(-q_i\right)^{-c_{ij}} \left(\operatorname{ad}_q
            F_i\right)^{(-c_{ij})}(F_j), &
            (i \neq j),
        \end{cases}
    \end{split}
\]
\noindent where, for a nonnegative integer $n$,
\[
    \left(\operatorname{ad}_q E_i \right)^{(n)} := \frac{ \left(
            \operatorname{ad}_q E_i\right)^n}{[n]_{q_i}!},
            \hspace{10mm}
    \left(\operatorname{ad}_q F_i \right)^{(n)} := \frac{ \left(
            \operatorname{ad}_q F_i\right)^n}{[n]_{q_i}!}.
\]
If $w \in W$ has a reduced expression $w = s_{i_1} \cdots s_{i_N} \in W$, we
write
\[
    T_w = T_{i_1} T_{i_2} \cdots T_{i_N}.
\]
A key property of the braid symmetries is given in the following proposition
(see e.g. \cite[Proposition 8.20]{Jantzen}).

\begin{proposition}

    \label{proposition 1}

    If $w\in W$ such that $w(\alpha_i) = \alpha_j$, then $T_w(E_i) = E_j$.

\end{proposition}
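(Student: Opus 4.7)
My plan is to proceed by induction on $\ell(w)$, with the base case $\ell(w) = 0$ (so $w = e$ and $i = j$) being immediate. The inductive step splits into two stages: first, a weight-space argument showing $T_w(E_i) = c E_j$ for some $c \in \mathbb{K}^{\times}$; second, pinning down $c = 1$.

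For the first stage, I would fix a reduced expression $w = s_{i_1} s_{i_2} \cdots s_{i_N}$ and study the intermediate weights $\gamma_k := s_{i_k} s_{i_{k+1}} \cdots s_{i_N}(\alpha_i)$ (with $\gamma_{N+1} := \alpha_i$, so that $\gamma_1 = \alpha_j$). Since $\alpha_j > 0$, the simple root $\alpha_i$ does not belong to the inversion set $\Phi_w = \{\alpha \in \Delta_+ : w(\alpha) \in \Delta_-\}$. I claim every $\gamma_k$ is a positive root: if $k$ were maximal with $\gamma_k \in \Delta_-$, then $\gamma_{k+1} > 0$ together with $s_{i_k}(\gamma_{k+1}) < 0$ would force $\gamma_{k+1} = \alpha_{i_k}$, and the resulting identity $\alpha_i = s_{i_N} s_{i_{N-1}} \cdots s_{i_{k+1}}(\alpha_{i_k})$ would put $\alpha_i$ into the standard description of $\Phi_w$, a contradiction. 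With positivity of the $\gamma_k$'s in hand, the standard property that $T_{i_k}$ sends ${\mathcal U}_q(\mathfrak{n}^+)_{\gamma_{k+1}}$ into ${\mathcal U}_q(\mathfrak{n}^+)_{\gamma_k}$ whenever $\gamma_{k+1} \neq \alpha_{i_k}$ lets me iterate and conclude that $T_w(E_i) \in {\mathcal U}_q(\mathfrak{n}^+)_{\alpha_j} = \mathbb{K} E_j$. Hence $T_w(E_i) = c E_j$ with $c \in \mathbb{K}^{\times}$.

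For the second stage, the parallel analysis on the negative side gives $T_w(F_i) = c' F_j$, and the Chevalley anti-involution $\omega$ (swapping $E_i \leftrightarrow F_i$ and sending $K_\mu \mapsto K_{-\mu}$, which commutes with each $T_k$ and hence with $T_w$) applied to $T_w(E_i) = c E_j$ yields $c' = c$. Applying $T_w$ to the Chevalley relation $E_i F_i - F_i E_i = (K_{\alpha_i} - K_{-\alpha_i})/(q_i - q_i^{-1})$, using $T_w(K_\mu) = K_{w(\mu)}$ and $q_i = q_j$ (valid because $w$ is an isometry for $\langle \, , \, \rangle$), then forces $c^2 = 1$. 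The principal obstacle is ruling out $c = -1$; I would handle it by tracking the leading term of $T_w(E_i)$ in the PBW-type basis of ${\mathcal U}_q(\mathfrak{n}^+)$ attached to the reduced expression, since the iterated $q$-commutators produced by Lusztig's rule $T_{i_k}(E_l) = (\operatorname{ad}_q E_{i_k})^{(-c_{i_k l})}(E_l)$ carry positive integer leading coefficients and force $c = +1$. An alternative would be to exploit the operator identity $T_w T_i = T_j T_w$ arising from the Weyl group relation $w s_i = s_j w$ (both of length $\ell(w) + 1$) and reduce the sign determination to rank-two verifications.
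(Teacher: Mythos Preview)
The paper does not prove this proposition; it simply cites \cite[Proposition 8.20]{Jantzen}. So there is no in-paper argument to compare against, and the relevant question is whether your outline stands on its own.

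Your first stage is fine: the positivity argument for the intermediate roots $\gamma_k$ is correct, and it legitimately yields $T_w(E_i)=cE_j$ for some $c\in\mathbb{K}^\times$.

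The second stage, however, has a genuine gap. The claim that the anti-involution $\omega$ (with $\omega(E_i)=F_i$, $\omega(F_i)=E_i$, $\omega(K_\mu)=K_{-\mu}$) commutes with each $T_k$ is false in the paper's conventions ($T_k:=T''_{k,1}$). For $c_{ij}=-1$ one computes
\[
\omega\bigl(T_i(E_j)\bigr)=\omega\bigl(E_iE_j-q_i^{-1}E_jE_i\bigr)=F_jF_i-q_i^{-1}F_iF_j,
\]
while
\[
T_i\bigl(\omega(E_j)\bigr)=T_i(F_j)=(-q_i)\bigl(F_iF_j-q_i^{-1}F_jF_i\bigr)=F_jF_i-q_iF_iF_j,
\]
and these differ. (In Lusztig's framework $\omega$ intertwines $T''_{i,1}$ with a \emph{different} braid operator $T'_{i,-1}$, not with itself.) Consequently you cannot deduce $c'=c$; applying $T_w$ to the relation $E_iF_i-F_iE_i=(K_{\alpha_i}-K_{-\alpha_i})/(q_i-q_i^{-1})$ only yields $cc'=1$, not $c^2=1$. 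The subsequent ``leading PBW coefficient'' sketch is then being asked to pin down $c$ from scratch, not merely to separate $\pm 1$, and as written it is too vague to carry that weight.

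The standard route (and the one Jantzen takes) bypasses the scalar issue entirely by an inductive reduction to rank two: writing $w=s_kw'$ with $\ell(w')=\ell(w)-1$, one has $k\neq j$, and either $\langle\alpha_k,\alpha_j\rangle=0$ (so $w'(\alpha_i)=\alpha_j$ and $T_k(E_j)=E_j$), or one reduces to an explicit rank-two verification inside $\langle s_j,s_k\rangle$. Your own closing remark about the braid identity $T_wT_i=T_jT_w$ and rank-two checks is pointing in exactly this direction; developing that into an actual argument would close the gap.
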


\section{The quantized nilradical ${\mathcal U}_q(\mathfrak{n}_J)$}

For each nonempty set $J$ of simple roots, let $\mathfrak{p}_J$ be the
parabolic subalgebra of $\mathfrak{g}$ obtained by deleting the
roots in $J$. The Levi decomposition of $\mathfrak{p}_J$ will be denoted by
\[
    \mathfrak{p}_J = \mathfrak{l}_J \ltimes \mathfrak{n}_J,
\]
where $\mathfrak{l}_J$ is the Levi subalgebra and $\mathfrak{n}_J$ is the
nilradical.

Let $w_o$ denote the longest element of the Weyl group, and
let $w_o^J\in W$ be the longest element in the subgroup $\langle s_i \mid i
\not\in J \rangle \subseteq W$. Define
\[
    w_J := w_o^Jw_o \in W.
\]
For a reduced expression,
\[
    w_J = s_{i_1} \cdot s_{i_2} \cdots s_{i_N} \in W,
\]
where $N$ is the length of $w_J$, define the roots
\[
    \beta_1 = \alpha_{i_1}, \beta_2=s_{i_1}\alpha_{i_2},...,\beta_N =
    s_{i_1}\cdots s_{i_{N-1}}\alpha_{i_N},
\]
and root vectors
\begin{equation}
    \label{root vectors}
    X_{\beta_1} = E_{i_1}, X_{\beta_2} = T_{s_{i_1}} E_{i_2},..., X_{\beta_N} =
    T_{s_{i_1}} \cdots T_{s_{i_{N-1}}} E_{i_N}.
\end{equation}

We will denote the set of radical roots by
\[
    \Delta_w := \left\{ \beta_1, \dots, \beta_N\right\}.
\]
These roots are precisely the positive roots that get sent to negative roots by
the action of $w_J^{-1}$. An analogous construction can be applied to obtain a
list of negative roots by replacing the $E$'s in (\ref{root vectors}) above
with $F$'s. The subalgebra of ${\mathcal U}_q(\mathfrak{g})$ generated by the
root vectors $X_{\beta_1},\dots, X_{\beta_N}$ is contained in the positive part
${\mathcal U}_q(\mathfrak{n}^+)$ (see e.g. \cite[Proposition 8.20]{Jantzen}).
This subalgebra will be denoted by ${\mathcal U}_q(\mathfrak{n}_J)$,
\[
    {\mathcal U}_q(\mathfrak{n}_J) := \langle X_{\beta_1}, \dots X_{\beta_N}
    \rangle \subseteq {\mathcal U}_q (\mathfrak{n}^+),
\]
and we refer to it as the \textit{quantized nilradical of $\mathfrak{p}_J$}, or
\textit{quantized nilradical} for short. The subalgebra of ${\mathcal
U}_q(\mathfrak{g})$ generated by the negative root vectors
\[
    X_{-\beta_1} = F_{i_1}, X_{-\beta_2} = T_{s_{i_1}} F_{i_2},...,
    X_{-\beta_N} = T_{s_{i_1}} \cdots T_{s_{i_{N-1}}} F_{i_N}.
\]
is isomorphic to ${\mathcal U}_q(\mathfrak{n}_J)$.

Quantized nilradicals belong to a larger class of algebras called quantum
Schubert cell algebras, which are indexed by elements $w$ in the Weyl group.
More generally, given a reduced expression of a Weyl group element $w$, the
corresponding quantum Schubert cell algebra ${\mathcal U}_q^+[w]$ can be
constructed in the same way as ${\mathcal U}_q(\mathfrak{n}_J)$ by replacing a
reduced expression for $w_J$ above with a reduced expression for $w$.  De
Concini, Kac, and Procesi \cite[Proposition 2.2]{DKP} proved that the algebra
${\mathcal U}_q^+[w]$ does not depend on the reduced expression for $w$.
Furthermore, every quantum Schubert cell ${\mathcal U}_q^+[w]$ has a PBW basis
\[
    X_{\beta_1}^{m_1}\cdots X_{\beta_N}^{m_N}, \hspace{.4cm} m_1,...,m_N \in
    \mathbb{Z}_{\geq 0},
\]
of standard monomials, and they have presentations as iterated Ore extensions,
\[
    {\mathcal U}_q^+[w] = \mathbb{K}[X_{\beta_1}][X_{\beta_2}; \sigma_2,
    \delta_2] \cdots [X_{\beta_N}; \sigma_N, \delta_N].
\]
For $1 < i < j \leq N$, define the \textit{interval subalgebra}
\begin{equation}
    \label{definition, interval subalgebra}
    {\mathbf U}_{[i, j]} := \langle X_i,X_{i + 1},\dots, X_j \rangle \subseteq
    {\mathcal U}_q^+[w]
\end{equation}
as the subalgebra generated by $X_i, X_{i + 1},\dots, X_j$. Standard monomials
\[
    X_{\beta_i}^{m_i} \cdots X_{\beta_j}^{m_j}, \hspace{5mm} m_i,\dots, m_j \in
    \mathbb{Z}_{\geq 0}
\]
form a basis of $\mathbf{U}_{[i, j]}$.  The Levendorskii-Soibelmann
straightening rule \cite[Prop. 5.5.2]{LS} tells us that for all $1 \leq i < j
\leq N$,
\[
    [X_{\beta_i}, X_{\beta_j}] \in \mathbf{U}_{[i + 1, j - 1]} \cap {\mathcal
    U}_q(\mathfrak{g})_{\beta_i + \beta_j},
\]
(recall (\ref{def, q-commutators})). As a consequence of the straightening
rule, we have the following corollary.
\begin{corollary}

    \label{LS corollary}

    If $1\leq i < j \leq N$ and there fails to exist a nonnegative integral
    combination of roots in $\left\{\beta_{i+1},\dots, \beta_{j-1}\right\}$
    that sum to $\beta_i + \beta_j$, then $[X_{\beta_i}, X_{\beta_j}] = 0$.

\end{corollary}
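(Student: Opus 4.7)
The plan is to deduce the corollary directly from the Levendorskii-Soibelmann straightening rule stated just above it, combined with the PBW basis of the interval subalgebra $\mathbf{U}_{[i+1, j-1]}$. The straightening rule asserts that
\[
    [X_{\beta_i}, X_{\beta_j}] \in \mathbf{U}_{[i+1, j-1]} \cap {\mathcal U}_q(\mathfrak{g})_{\beta_i + \beta_j},
\]
so the whole task reduces to showing that under the hypothesis of the corollary, this intersection is the zero subspace.

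First I would invoke the PBW description already recalled in the paper: the standard monomials $X_{\beta_{i+1}}^{m_{i+1}} \cdots X_{\beta_{j-1}}^{m_{j-1}}$ with nonnegative integer exponents form a $\mathbb{K}$-basis of $\mathbf{U}_{[i+1, j-1]}$. Since each $X_{\beta_k}$ is $Q$-homogeneous of degree $\beta_k$, each such monomial is $Q$-homogeneous of degree $\sum_{k=i+1}^{j-1} m_k \beta_k$, a nonnegative integer combination of the roots $\beta_{i+1}, \dots, \beta_{j-1}$.

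Next I would observe that the $Q$-grading on ${\mathcal U}_q(\mathfrak{g})$ restricts to a grading on $\mathbf{U}_{[i+1, j-1]}$, whose homogeneous component in degree $\mu$ is spanned by those PBW monomials whose exponent vector $(m_{i+1}, \dots, m_{j-1})$ satisfies $\sum m_k \beta_k = \mu$. Specializing to $\mu = \beta_i + \beta_j$, the hypothesis of the corollary says that no such exponent vector exists; therefore $\mathbf{U}_{[i+1, j-1]} \cap {\mathcal U}_q(\mathfrak{g})_{\beta_i + \beta_j} = 0$, and so $[X_{\beta_i}, X_{\beta_j}] = 0$.

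There is really no substantive obstacle here — the corollary is a bookkeeping consequence of the straightening rule and the homogeneity of the PBW basis. The only mild care required is to confirm that the $Q$-grading passes to the subalgebra $\mathbf{U}_{[i+1, j-1]}$, which is immediate because its generators are $Q$-homogeneous.
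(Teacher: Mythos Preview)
Your proof is correct and matches the paper's approach exactly: the paper does not give a separate proof but presents the corollary as an immediate consequence of the Levendorskii--Soibelmann straightening rule, which is precisely the deduction you spell out via the $Q$-homogeneity of the PBW basis of $\mathbf{U}_{[i+1,j-1]}$.
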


Furthermore, every quantum Schubert cell algebra ${\mathcal U}_q^+[w]$ has a
quantum cluster algebra structure (provided the deformation parameter $q$
satisfies some minor conditions) \cite{Goodearl Yakimov} with a set of frozen
variables
\[
    \left\{ \Theta_1,\dots, \Theta_r \right\}, \hspace{5mm} r = \# \left\{
    s_i\in W : s_i < w \text{ w.r.t. the Bruhat order} \right\}.
\]
The \textit{normal} subalgebra of ${\mathcal U}_q^+[w]$ (the subalgebra
generated by the normal elements) is generated by the frozen variables
\cite[Proposition 2.7]{GY2}.

\section{Automorphisms of quantum Schubert cells}

In this section, we assume that $R$ is a quantum Schubert cell algebra, say $R
= {\mathcal U}_q^+[w] \subseteq {\mathcal U}_q(\mathfrak{g})$. Fix a reduced
expression
\[
    w = s_{i_1}s_{i_2}\cdots s_{i_N} \in W
\]
and let
\[
    X_{\beta_1}, X_{\beta_2},\dots, X_{\beta_N} \in R
\]
be the corresponding Lusztig root vectors.  Recall that $R$ can be written as
an iterated Ore extension
\begin{equation}
    \label{Ore presentation}
    R = \mathbb{K}[X_{\beta_1}][X_{\beta_2}; \sigma_2, \delta_2]\cdots
    [X_{\beta_N}; \sigma_N, \delta_N].
\end{equation}

Observe that the algebraic torus ${\mathcal H} =
\left(\mathbb{K}^\times\right)^r$ of rank $r =
\operatorname{rank}(\mathfrak{g})$ acts canonically on ${\mathcal
U}_q(\mathfrak{g})$ via algebra automorphisms. An element $h = (h_1,\dots, h_r)
\in {\mathcal H}$ acts by the rule
\[
    h.E_i = h_iE_i, \hspace{5mm}h.F_i = h_i^{-1}F_i, \hspace{5mm} h.K_\mu =
    K_\mu,
\]
for all $1\leq i \leq r$ and $\mu \in Q$. This action is preserved by $R$, and
each Lusztig root vector $X_{\beta_i}$ is an ${\mathcal H}$-eigenvector. In
fact, every $Q$-homogeneous element is an ${\mathcal H}$-eigenvector.
Furthermore, the iterated Ore extension presentation in (\ref{Ore
presentation}) is a symmetric Cauchon-Goodearl-Letzter (CGL) extension
presentation for $R$ (see e.g.  \cite[Theorem 9.1.b]{Goodearl Yakimov}).

\subsection{The function $\mathbf{\eta}$}

Following \cite{GY2}, to every iterated Ore extension presentation $R$, as in
(\ref{Ore presentation}) above, we define the rank of $R$
\[
    \operatorname{rank}(R) = \#\left\{ k \in \left\{1,\dots, N\right\} :
    \delta_k = 0\right\}.
\]
Let $S$ be a set of cardinality $\operatorname{rank}(R)$, and let $\eta :
\left\{1,\dots, N\right\} \to S$ be a function such that $\eta(\left\{k \in
\left\{1,\dots, N\right\} : \delta_k = 0\right\}) = S$. That is, we assign to
each trivial derivation $\delta_k$ a unique element in $S$. We require also
that, for every $k\in \left\{1,\dots, N\right\}$ such that $\delta_k \neq 0$,
\[
    \eta(k) = \eta\left(\operatorname{max}\left\{\ell \in \left\{1,\dots, k -
    1\right\}: \delta_k(X_{\beta_\ell}) \neq 0\right\}\right).
\]
The existence of such a function $\eta$ was proved in \cite[Theorem 4.3]{GY2},
and it plays a key role in determining the homogeneous prime elements for any
CGL extension $R$. When $R = {\mathcal U}_q^+[w]$, the rank of $R$ agrees with
the cardinality of the \textit{support} of $w$,
\[
    \operatorname{supp}(w) := \left\{i\in \mathbf{I} : s_i < w \text{ w.r.t.
    the Bruhat order}\right\}.
\]
In this setting, the function $\eta: \left\{1,\dots,N\right\} \to
\operatorname{supp}(w)$ can be defined by the rule
\begin{equation}
    \label{eta, reduced expr}
    \eta(k) = i_k, \hspace{5mm} (1\leq k \leq N),
\end{equation}
(see e.g. \cite[Theorem 9.5]{Goodearl Yakimov}).

\subsection{The core of $\mathbf{R}$}

Following \cite[Section 4.1]{GY1}, define $P_x(R)$ to be the set of those $i
\in \left\{1,\dots, N\right\}$ such that $X_{\beta_i}$ is prime. By
\cite[Proposition 2.6]{GY1},
\begin{equation}
    \label{definition PxR}
    P_x(R) = \left\{ i \in \left\{1,\dots, N\right\} : \left\{i\right\} =
    \eta^{-1}(\eta(i)) \right\}.
\end{equation}

For $1 \leq j < k \leq N$, the element $Q_{jk} := [X_{\beta_j}, X_{\beta_k}]$
can be written uniquely as a linear combination of monomials $X_{\beta_{j
+1}}^{m_{j+1}} \cdots X_{\beta_{k-1}}^{m_{k-1}}$.  Let $F_x(R)$ be the set of
those $i \in P_x(R)$ such that $X_{\beta_i}$ does not appear in any $Q_{jk}$.
More precisely, no monomial in $Q_{jk}$ with a nonzero coefficient contains a
positive power of $X_{\beta_i}$.  Define $C_x(R) := \left\{1,\dots, N\right\}
\backslash F_x(R)$. The \textit{core} of $R$, denoted by ${\mathcal C}(R)$, is
defined as the subalgebra generated by the $X_{\beta_i}$'s with $i\in C_x(R)$,
\[
    {\mathcal C}(R) := \mathbb{K}\langle X_{\beta_i} : i \in C_x(R) \rangle.
\]

\subsection{Diagonal and graded automorphisms}

An algebra automorphism $\phi: R \to R$ that sends every Lusztig root vector
$X_{\beta_i}$ ($1\leq i \leq N$) to a scalar multiple of itself will be called
a \textit{diagonal} automorphism. This notion is dependent upon the choice of
reduced expression for $w$. Hence, whenever we refer to automorphisms of this
type, we have a fixed reduced expression for $w$ in mind. The set of diagonal
automorphisms is a subgroup of the automorphism group $\operatorname{Aut}(R)$
of $R$. We will denote this subgroup by $\operatorname{Diag-Aut}(R)$. Thus, for
an algebra automorphism $\phi: R \to R$,
\[
    \phi \in \operatorname{Diag-Aut}(R) \iff \phi(X_{\beta_i}) \in
    \mathbb{K}^\times X_{\beta_i}, \hspace{5mm}(1\leq i \leq N).
\]
From Section \ref{Z grading section}, recall that every coweight $\lambda \in
P^\vee$ induces a $\mathbb{Z}$-grading on ${\mathcal U}_q(\mathfrak{g})$. With
this, the subalgebra $R = {\mathcal U}_q^+[w] \subseteq {\mathcal
U}_q(\mathfrak{g})$ inherits this grading,
\[
    R = \bigoplus_{d\in\mathbb{Z}} R_d, \hspace{5mm} R_d := \left\{ u \in R
    \cap {\mathcal U}_q(\mathfrak{g})_\mu : \langle \mu, \lambda \rangle = d
    \right\}.
\]
We assume throughout that $\lambda \in P^\vee$ is chosen so that the induced
grading satisfies the following conditions:

\begin{enumerate}

    \item $R = R_0 \oplus R_1 \oplus R_2 \oplus \cdots$ (that is to say, $R_d =
        0$ whenever $d < 0$),

    \item $R_d$ is finite dimensional for every $d\geq 0$ (i.e. $R$ is locally
        finite),

    \item $R_0 = \mathbb{K}$ (i.e. $R$ is connected), and

    \item $R$ is generated, as an algebra, by $R_1$.

\end{enumerate}

These conditions mimic the standard grading on a commutative polynomial
ring $\mathbb{K}[z_1,\dots, z_N]$, where each variable $z_i$ is assigned degree
$1$.  It is always possible to choose $\lambda$ so that the first three
conditions above are satisfied. For example, $\lambda = \sum_{i\in
\mathbf{I}}\varpi_i^\vee$ is one such choice. However, it is not always
possible to select $\lambda$ such that all four conditions are met. To give
one example, it is not too difficult to verify that such a $\lambda$ fails to
exist for the case when the underlying Lie algebra is of type $G_2$ and $w =
s_2s_1s_2$.

An algebra automorphism $\phi: R \to R$ is a \textit{graded} algebra
automorphism if it respects the $\mathbb{Z}_{\geq 0}$-grading. That is to say,
\[
    \phi(R_d) = R_d
\]
for all $d\geq 0$. The set of graded automorphisms is a subgroup of the
automorphism group of $R$. We denote the subgroup of graded automorphisms by
$\operatorname{Gr-Aut}(R)$. Observe we have a chain of subgroups,
\[
    \operatorname{Diag-Aut}(R) \subseteq \operatorname{Gr-Aut}(R) \subseteq
    \operatorname{Aut}(R).
\]
Using (\ref{eta, reduced expr}) and (\ref{definition PxR}), one can easily
determine the set $P_x(R)$ from the reduced expression $w = s_1 \cdots s_N$.
In many cases $P_x(R)$ is empty (and ${\mathcal C}(R) = R$), and in such
situations we have the most control over the automorphisms of $R$ (see e.g.
\cite{GY1}). The following theorem describes sufficient conditions on $R$ to
conclude that every automorphism of $R$ is graded.

\begin{theorem}

    \label{when Aut is Gr-Aut}

    Suppose $R = {\mathcal U}_q^+[w]$ is a quantum Schubert cell algebra with
    Lusztig root vectors $X_{\beta_1},\dots, X_{\beta_N}$. Suppose ${\mathcal
    C}(R) = R$. Suppose also that $R$ is connected graded, locally finite, and
    generated by $R_1$. For every radical root $\beta_i \in \Delta_w$ with
    $X_{\beta_i} \in R_1$, suppose there exists $\beta_j\in \Delta_w$ such that
    $X_{\beta_i}X_{\beta_j} = \kappa X_{\beta_j}X_{\beta_i}$ for some scalar
    $\kappa \neq 1$. Then every algebra automorphism of $R$ is a graded
    automorphism. In other words,
    \[
        \operatorname{Aut}(R) = \operatorname{Gr-Aut}(R).
    \]

\end{theorem}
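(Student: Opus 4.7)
The plan is to show that an arbitrary algebra automorphism $\phi\in\operatorname{Aut}(R)$ satisfies $\phi(R_1)\subseteq R_1$. Since $R$ is connected $\mathbb{Z}_{\geq 0}$-graded, locally finite, and generated by $R_1$, we have $R_d=R_1^d$ for every $d\geq 0$, so $\phi(R_1)\subseteq R_1$ immediately propagates to $\phi(R_d)\subseteq R_d$ for all $d$. Applying the argument to $\phi^{-1}$ and invoking local finiteness upgrades each containment to an equality, giving $\phi\in\operatorname{Gr-Aut}(R)$. The problem therefore reduces to showing $\phi(X_{\beta_i})\in R_1$ whenever $X_{\beta_i}\in R_1$.

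The first main ingredient is the rigidity theorem \cite[Theorem 4.2]{GY2}, applied under the hypothesis $\mathcal{C}(R)=R$. This places us in the strongest rigidity regime for CGL extensions, and from it we extract the statement that, after composing $\phi$ with a suitable element of the standard torus $\mathcal{H}=(\mathbb{K}^\times)^r$ acting on $\mathcal{U}_q(\mathfrak{g})$, each image $\phi(X_{\beta_i})$ may be taken to be $Q$-homogeneous of some weight $\mu_i\in Q$. Because $\mathcal{H}$ acts by graded automorphisms, establishing the graded property for the adjusted $\phi$ suffices for the original.

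The second ingredient is the $q$-commutation hypothesis, which pins down the $\mathbb{Z}$-degree $\langle\lambda,\mu_i\rangle$. Given $X_{\beta_i}\in R_1$, pick $\beta_j\in\Delta_w$ with $X_{\beta_i}X_{\beta_j}=\kappa X_{\beta_j}X_{\beta_i}$ for some $\kappa\neq 1$. Applying $\phi$ preserves this identity exactly. Since $\phi(X_{\beta_i})$ and $\phi(X_{\beta_j})$ are $Q$-homogeneous of weights $\mu_i$ and $\mu_j$, the $q$-commutation rule for $Q$-homogeneous elements of $\mathcal{U}_q(\mathfrak{g})$ forces $q^{\langle\mu_i,\mu_j\rangle}=\kappa=q^{\langle\beta_i,\beta_j\rangle}$, and since $q$ is not a root of unity we get $\langle\mu_i,\mu_j\rangle=\langle\beta_i,\beta_j\rangle$. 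The condition $\kappa\neq 1$ ensures this pairing is nonzero, ruling out the pathological case in which $\mu_i$ evaluates trivially against the relevant coweight direction. Collecting these constraints as $j$ varies, and using that the set $\{\phi(X_{\beta_1}),\ldots,\phi(X_{\beta_N})\}$ generates $R$, one identifies $\mu_i$ with a radical weight having $\langle\lambda,\mu_i\rangle=\langle\lambda,\beta_i\rangle=1$, whence $\phi(X_{\beta_i})\in R_1$.

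The main obstacle is the second step: rigorously extracting from \cite[Theorem 4.2]{GY2} the $Q$-homogeneity of $\phi(X_{\beta_i})$ (after a torus adjustment) under the hypothesis $\mathcal{C}(R)=R$. This requires a delicate analysis of the interval subalgebras $\mathbf{U}_{[i,j]}$, the normal elements, and the leading-term structure of CGL extensions, in order to show that the strong rigidity guaranteed by the core condition translates into $\mathcal{H}$-eigenvector behavior at the level of each generator. Once this $Q$-homogeneity is in place, the $q$-commutation argument yields the desired $\mathbb{Z}$-degree matching rather directly from the non-root-of-unity hypothesis on $q$.
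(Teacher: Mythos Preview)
Your proposal diverges from the paper's argument and contains a genuine error in the second step.

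The paper's proof is short and structurally different: it cites \cite[Proposition 4.2]{LL} to show that any $\phi\in\operatorname{Aut}(R)$ is \emph{filtered}, i.e.\ $\phi(R_d)\subseteq\bigoplus_{j\ge d}R_j$ for all $d$ (this is where the skew-commutation hypothesis $X_{\beta_i}X_{\beta_j}=\kappa X_{\beta_j}X_{\beta_i}$ with $\kappa\neq 1$ is actually consumed). Separately, \cite[Theorem 4.2]{GY1} (not GY2) shows that a unipotent automorphism of a symmetric saturated CGL extension restricts to the identity on the core; since $\mathcal{C}(R)=R$, the only unipotent automorphism is the identity. Then \cite[Lemma 4.7]{GY1} combines ``filtered'' with ``no nontrivial unipotents'' to conclude ``graded.'' There is no attempt to prove that $\phi(X_{\beta_i})$ is $Q$-homogeneous.

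Your second ingredient is where the argument breaks. You write that ``the $q$-commutation rule for $Q$-homogeneous elements of $\mathcal{U}_q(\mathfrak{g})$ forces $q^{\langle\mu_i,\mu_j\rangle}=\kappa$.'' There is no such rule: two $Q$-homogeneous elements $x,y$ of weights $\mu,\eta$ do \emph{not} in general satisfy $xy=q^{\langle\mu,\eta\rangle}yx$; the bracket $[x,y]=xy-q^{\langle\mu,\eta\rangle}yx$ defined in (\ref{def, q-commutators}) is typically nonzero (e.g.\ $[E_i,E_j]\neq 0$ for adjacent simple roots). So from $\phi(X_{\beta_i})\phi(X_{\beta_j})=\kappa\,\phi(X_{\beta_j})\phi(X_{\beta_i})$ and $Q$-homogeneity of the images you cannot read off $\langle\mu_i,\mu_j\rangle$. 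Consequently the chain of equalities $\langle\mu_i,\mu_j\rangle=\langle\beta_i,\beta_j\rangle$ and the subsequent identification $\langle\lambda,\mu_i\rangle=1$ are unsupported. You also correctly flag the first ingredient (extracting $Q$-homogeneity from the core hypothesis) as an unresolved obstacle; the cited result does not yield that conclusion directly, and the paper never needs it.
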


\begin{proof}

    It was shown in \cite[Theorem 4.2]{GY1} that if $R$ is a symmetric
    saturated CGL extension which is a connected graded algebra, then every
    unipotent automorphism restricted to ${\mathcal C}(R)$ is the identity.
    Since ${\mathcal C}(R) = R$, then the identity is the only unipotent
    automorphism of $R$. As a consequence of \cite[Lemma 4.7]{GY1} every
    automorphism $\phi$ is graded provided $\phi(R_d) \subseteq \oplus_{j\geq
    d} R_j$ for all $d \geq 0$. However, this condition was established in
    \cite[Proposition 4.2]{LL}.

\end{proof}

\subsection{The normal subalgebra $\mathbf{{\mathcal N}(R)}$ and the sets
$\mathbf{C_d^m}$ and $\mathbf{\gamma_{d, \ell}^m}$}

Following \cite{GY2}, let ${\mathcal N}(R)$ be the \textit{normal subalgebra}
of $R$. It is the subalgebra generated by the normal elements of $R$. By
\cite[Theorem 4.3]{GY2}, ${\mathcal N}(R)$ is a generated by a
finite set of $Q$-homogeneous prime elements
\[
    \left\{\Theta_i : i \in \operatorname{supp}(w)\right\} \subseteq R.
\]
We remark here that the element $\Theta_i \in R$ is written as
$\Delta_{\varpi_i,w\varpi_i}$ in \cite[Section 9.4]{Goodearl Yakimov}.  We have
the following commutation relations,
\begin{equation}
    \label{commutation with normal subalgebra}
    u\Theta_i  = q^{-\langle \left(1 + w\right)\varpi_i, \mu \rangle}
    \Theta_iu, \hspace{5mm} \operatorname{deg}_Q(u) = \mu, \hspace{3mm} i
    \in \operatorname{supp}(w),
\end{equation}
(see e.g. \cite[Eq. 3.30]{Y2} and \cite[Eq. 9.23]{Goodearl Yakimov}).

For $d\in \mathbb{Z}_{\geq 0}$, $m \in \mathbb{Z}$, and $x \in R_1$, define
\begin{equation}
    \label{definition C}
    \begin{split}
        &C_d^m := \left\{ x \in R_1 : xy = q^m yx
        \text{ for all } y \in {\mathcal N}(R)_d
        \right\},
        \\
        &V_d^m(x) := \left\{ y \in {\mathcal N}\left(R\right)_d : xy = q^myx
        \right\}
    \end{split}
\end{equation}

For $d,\ell \in \mathbb{Z}_{\geq 0}$ and $m \in \mathbb{Z}$, define the set
\begin{equation}
    \label{definition gamma}
    \gamma_{d,\ell}^m := \left\{ x \in R_1 :
    \operatorname{dim}_{\mathbb{K}} \left(  V_d^m(x)
    \right) = \ell \right\}.
\end{equation}

The following proposition is an easy observation.

\begin{proposition}
    \label{C gamma, fixed}

    Suppose $\phi \in \operatorname{Gr-Aut}(R)$. Let $C_d^m$ and $\gamma_{d,
    \ell}^m$ be as defined in (\ref{definition C}) and (\ref{definition
    gamma}).  Then each $C_d^m$ is a $\phi$-invariant subspace of $R_1$, and
    each $\gamma_{d,\ell}^m$ is a $\phi$-invariant subset of $R_1$.

\end{proposition}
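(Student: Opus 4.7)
The plan is to leverage two facts about $\phi$: it preserves the normal subalgebra ${\mathcal N}(R)$ setwise, and it preserves each graded component $R_d$. The second is given by assumption ($\phi \in \operatorname{Gr-Aut}(R)$); the first follows because normality is an intrinsic ring-theoretic property — an element $z \in R$ is normal iff $zR = Rz$, and this equation is manifestly preserved by any algebra automorphism. Hence $\phi$ sends normal elements to normal elements, so $\phi({\mathcal N}(R)) = {\mathcal N}(R)$. Combining with gradedness, $\phi$ restricts to a linear automorphism of ${\mathcal N}(R)_d$ for every $d \geq 0$.

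To show $C_d^m$ is a $\phi$-invariant subspace, I would first observe that $C_d^m$ is clearly a subspace of $R_1$: the defining relation $xy = q^m yx$ is linear in $x$ for each fixed $y$. To verify invariance, take $x \in C_d^m$ and any $y \in {\mathcal N}(R)_d$. Since $\phi|_{{\mathcal N}(R)_d}$ is a bijection, write $y = \phi(y')$ for a unique $y' \in {\mathcal N}(R)_d$. Applying $\phi$ to $xy' = q^m y'x$ yields $\phi(x)y = q^m y\phi(x)$, so $\phi(x) \in C_d^m$.

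For $\gamma_{d,\ell}^m$, the key step is to show that $\phi$ restricts to a linear isomorphism $V_d^m(x) \to V_d^m(\phi(x))$ for every $x \in R_1$. Given $y \in V_d^m(x)$, applying $\phi$ to $xy = q^m yx$ shows $\phi(y) \in V_d^m(\phi(x))$, and since $\phi$ is a bijection on ${\mathcal N}(R)_d$ the containment $\phi(V_d^m(x)) \subseteq V_d^m(\phi(x))$ is in fact an equality (apply the same argument to $\phi^{-1}$ and $\phi(x)$). Therefore $\dim_{\mathbb{K}} V_d^m(\phi(x)) = \dim_{\mathbb{K}} V_d^m(x)$, and $x \in \gamma_{d,\ell}^m$ implies $\phi(x) \in \gamma_{d,\ell}^m$.

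There is no serious obstacle here; the proposition is essentially a formal unwinding of definitions. The only conceptual point worth emphasizing is the invariance of ${\mathcal N}(R)$ under $\phi$, which needs the intrinsic characterization of normal elements rather than any detailed information about the generators $\Theta_i$ or the commutation relations in (\ref{commutation with normal subalgebra}). Once that observation is in hand, both claims reduce to applying $\phi$ to the defining $q$-commutation relations and using that $\phi$ is a bijection on each ${\mathcal N}(R)_d$.
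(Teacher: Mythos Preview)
Your proposal is correct; the paper itself gives no proof, merely remarking that the proposition ``is an easy observation,'' and your argument is precisely the natural unwinding of definitions one would supply. The key ingredient you identify --- that ${\mathcal N}(R)$ is $\phi$-invariant because normality is preserved by any algebra automorphism, and hence each ${\mathcal N}(R)_d$ is $\phi$-invariant by gradedness --- is exactly what makes the observation immediate.
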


The following theorem gives us sufficient conditions to determine when a
standard generator $\Theta_j$ of ${\mathcal N}(R)$ gets sent to a nonzero
scalar multiple of itself by a graded algebra automorphism $\phi: R \to R$.

\begin{theorem}

    \label{normal gen, fixed}

    Suppose $\phi\in \operatorname{Gr-Aut}(R)$. For every $i\in
    \operatorname{supp}(w)$, let $d_i$ be the degree of $\Theta_i$. That is,
    $\Theta_i \in {\mathcal N}(R)_{d_i}$. If, for some $j\in
    \operatorname{supp}(w)$, there fails to exist a nonnegative integral
    combination of numbers in $\left\{ d_i : i \in \operatorname{supp}(w)
    \text{ and } i \neq j\right\}$ that sum to $d_j$, then
    \[
        \phi (\Theta_j) \in \mathbb{K}^\times \Theta_j.
    \]

\end{theorem}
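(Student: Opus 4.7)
The plan is to show that $\phi(\Theta_j)$ lies in a one-dimensional subspace, namely $\mathbb{K}\Theta_j$. The two ingredients are (i) $\phi$ preserves the normal subalgebra graded-component-wise, and (ii) the only monomial in the $\Theta_i$'s of total degree $d_j$ (under the given $\mathbb{Z}$-grading) is $\Theta_j$ itself.

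For (i), first observe that any algebra automorphism of $R$ sends normal elements to normal elements, since $\phi(u)R = \phi(uR) = \phi(Ru) = R\phi(u)$ for normal $u$. Thus $\phi(\mathcal{N}(R)) = \mathcal{N}(R)$. Since $\phi \in \operatorname{Gr-Aut}(R)$ preserves the $\mathbb{Z}_{\geq 0}$-grading, $\phi$ restricts to a linear automorphism of $\mathcal{N}(R)_{d_j}$, and in particular $\phi(\Theta_j) \in \mathcal{N}(R)_{d_j}$.

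For (ii), I would invoke the structure of $\mathcal{N}(R)$: by \cite[Theorem 4.3]{GY2}, $\mathcal{N}(R)$ is generated by the $Q$-homogeneous prime elements $\{\Theta_i : i \in \operatorname{supp}(w)\}$, and the commutation formula (\ref{commutation with normal subalgebra}) shows that these generators pairwise $q$-commute. Since they are algebraically independent (they form a PBW-type set of frozen variables of the quantum cluster algebra structure on $R$), $\mathcal{N}(R)$ is a quantum affine space in the $\Theta_i$'s, so monomials $\prod_{i \in \operatorname{supp}(w)} \Theta_i^{a_i}$ form a $\mathbb{K}$-basis. Because each $\Theta_i$ is $\mathbb{Z}$-homogeneous of degree $d_i$, the graded component $\mathcal{N}(R)_{d_j}$ has basis
\[
    \Bigl\{ \prod_i \Theta_i^{a_i} : a_i \in \mathbb{Z}_{\geq 0}, \sum_i a_i d_i = d_j \Bigr\}.
\]

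Now I would close the argument by combinatorial inspection. Each $d_i$ is a positive integer (since $\Theta_i$ is not a scalar and $R_0 = \mathbb{K}$). The hypothesis says $d_j$ is not a nonnegative integral combination of $\{d_i : i \neq j\}$, so any tuple $(a_i)$ with $\sum_i a_i d_i = d_j$ must have $a_j \geq 1$. But $a_j \geq 2$ forces $\sum_i a_i d_i \geq 2d_j > d_j$, and $a_j = 1$ forces $\sum_{i \neq j} a_i d_i = 0$, which by positivity of the $d_i$'s forces $a_i = 0$ for all $i \neq j$. The only surviving monomial is $\Theta_j$, so $\mathcal{N}(R)_{d_j} = \mathbb{K}\Theta_j$ and $\phi(\Theta_j) = c\Theta_j$ for some $c \in \mathbb{K}$; injectivity of $\phi$ gives $c \in \mathbb{K}^\times$.

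The only real point of substance is the assertion that the $\Theta_i$'s are algebraically independent pairwise $q$-commuting generators of $\mathcal{N}(R)$; this I would cite directly from \cite[Theorem 4.3]{GY2} together with (\ref{commutation with normal subalgebra}), rather than reprove. Everything else is a short dimension count inside a graded quantum affine space.
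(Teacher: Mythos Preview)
Your proposal is correct and follows essentially the same approach as the paper: both argue that $\mathcal{N}(R)$ is $\phi$-invariant, use that ordered monomials in the $\Theta_i$'s form a $\mathbb{K}$-basis of $\mathcal{N}(R)$, and deduce from the hypothesis that $\mathcal{N}(R)_{d_j}$ is one-dimensional. The only minor difference is bibliographic: the paper cites \cite[Theorem 4.6]{GY2} directly for the monomial basis of $\mathcal{N}(R)$, whereas you cite \cite[Theorem 4.3]{GY2} together with (\ref{commutation with normal subalgebra}) and argue via the quantum-affine-space description; the former is the more precise reference for the basis claim.
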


\begin{proof}

    Ordered monomials in the $\Theta_i$'s form a basis (over $\mathbb{K}$) of
    the normal subalgebra ${\mathcal N}(R)$ \cite[Theorem 4.6]{GY2}. As
    there fails to exist a nonnegative integral combination of numbers in
    $\left\{ d_i : i \in \operatorname{supp}(w) \text{ and } i \neq j\right\}$
    that sum to $d_j$, this implies that ${\mathcal N}(R)_{d_j}$ is a
    one-dimensional vector space over $\mathbb{K}$ spanned by the element
    $\Theta_j$. Since ${\mathcal N}(R)$ is an invariant subalgebra of $R$ under
    $\phi$, then we have $\phi ({\mathcal N}(R)_{d_j}) = {\mathcal
    N}(R)_{d_j}$. Hence $\phi(\Theta_j) \in \mathbb{K}^\times \Theta_j$.

\end{proof}

The following theorem gives sufficient conditions to determine when a Lusztig
root vector $X_\beta$ gets sent to a nonzero scalar multiple of itself by a
graded algebra automorphism $\phi: R \to R$.

\begin{theorem}

    \label{root vector, fixed}

    Suppose $\phi \in \operatorname{Gr-Aut}(R)$ and $\phi(\Theta_i) \in
    \mathbb{K}^\times \Theta_i$ for some $i\in \operatorname{supp}(w)$.
    Suppose also that there exists a radical root $\beta \in \Delta_w$ with
    $X_\beta \in R_1$ such that
    \[
        \langle \beta - \beta^\prime, \left(1 + w \right) \varpi_i \rangle \neq
        0
    \]
    for every radical root $\beta^\prime \in \Delta_w \backslash
    \left\{\beta\right\}$ with $X_{\beta^\prime} \in R_1$, then
    \[
        \phi(X_\beta) \in \mathbb{K}^\times X_\beta.
    \]

\end{theorem}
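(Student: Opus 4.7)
The plan is to compare commutation relations with $\Theta_i$ on both sides of the automorphism. Concretely, since $\phi$ is graded, $\phi(X_\beta)$ lies in $R_1$, so the first step is to identify $R_1$ explicitly. Using the PBW basis of standard monomials together with the assumption that $R$ is connected ($R_0 = \mathbb{K}$) and $R_d = 0$ for $d < 0$, every Lusztig root vector $X_{\beta_k}$ must have $\mathbb{Z}$-degree $\langle \lambda, \beta_k \rangle > 0$, and a PBW monomial has degree $1$ only if it is a single root vector of degree $1$. Therefore
\[
    R_1 = \bigoplus_{\beta^\prime \in \Delta_w,\, X_{\beta^\prime} \in R_1} \mathbb{K}\, X_{\beta^\prime},
\]
and we may write $\phi(X_\beta) = \sum_{\beta^\prime} c_{\beta^\prime} X_{\beta^\prime}$ for some scalars $c_{\beta^\prime} \in \mathbb{K}$.

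Next, I would apply $\phi$ to the commutation relation (\ref{commutation with normal subalgebra}) for $u = X_\beta$:
\[
    X_\beta \Theta_i = q^{-\langle (1 + w)\varpi_i, \beta \rangle} \Theta_i X_\beta.
\]
Using $\phi(\Theta_i) = \kappa \Theta_i$ with $\kappa \in \mathbb{K}^\times$ and cancelling $\kappa$ on both sides gives $\phi(X_\beta)\,\Theta_i = q^{-\langle (1 + w)\varpi_i, \beta \rangle} \Theta_i\, \phi(X_\beta)$. Expanding $\phi(X_\beta)$ via the decomposition above and again applying (\ref{commutation with normal subalgebra}) to each summand yields
\[
    \sum_{\beta^\prime} c_{\beta^\prime}\bigl(q^{-\langle (1+w)\varpi_i, \beta^\prime \rangle} - q^{-\langle (1+w)\varpi_i, \beta \rangle}\bigr)\,\Theta_i X_{\beta^\prime} = 0.
\]

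Because $\Theta_i$ is $Q$-homogeneous, the products $\Theta_i X_{\beta^\prime}$ live in distinct $Q$-graded components as $\beta^\prime$ varies, hence are linearly independent. Thus each coefficient must vanish, and for $\beta^\prime \neq \beta$ the hypothesis $\langle \beta - \beta^\prime, (1+w)\varpi_i \rangle \neq 0$ combined with the fact that $q$ is not a root of unity forces $c_{\beta^\prime} = 0$. Hence $\phi(X_\beta) = c_\beta X_\beta$, and since $\phi$ is an automorphism, $c_\beta \in \mathbb{K}^\times$.

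The argument is essentially a linear-independence/character-separation computation once the right framework is set up; there is no serious obstacle. The only subtle point is the preparatory observation that $R_1$ is spanned purely by Lusztig root vectors (rather than by more complicated PBW elements), which requires the four grading conditions imposed on $\lambda$. Everything else follows from the explicit commutation formula with $\Theta_i$ and the $Q$-grading.
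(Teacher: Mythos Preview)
Your proof is correct and follows essentially the same approach as the paper: write $\phi(X_\beta)$ as a linear combination of the degree-one Lusztig root vectors, apply $\phi$ to the commutation relation with $\Theta_i$, and use $Q$-homogeneity plus the non-root-of-unity assumption to kill all coefficients except $c_\beta$. Your proposal is slightly more detailed than the paper's in that you explicitly justify why $R_1$ is spanned by Lusztig root vectors via the PBW basis and the connectedness/positivity of the grading, whereas the paper simply takes this as given.
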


\begin{proof}

    Suppose $x_1,\dots,x_n$ is a list of the Lusztig root vectors in $R_1$.
    Without loss of generality, assume $x_1 = X_\beta$.  There are integers
    $d_1,\dots,d_n$ which can be computed explicitly using (\ref{commutation
    with normal subalgebra}) such that $x_j\Theta_i = q^{d_j} \Theta_i x_j$.
    The given hypotheses imply that $d_1$ is not equal to any number in
    $\left\{d_2, \dots, d_n\right\}$.

    The automorphism $\phi$ sends $x_1$ to a linear combination of $x_1,\dots,
    x_n$, say $\phi(x_1) = \sum c_jx_j$, ($c_j \in \mathbb{K}$). Applying
    $\phi$ to the relation $x_1\Theta_i = q^{d_1}\Theta_i x_1$ yields $\sum
    c_jx_j\Theta_i = \sum c_jq^{d_1}\Theta_i x_j = \sum c_j q^{d_j} \Theta_i
    x_j$.  Thus, $\sum c_j (q^{d_1} - q^{d_j}) \Theta_i x_j = 0$. The elements
    $\Theta_i x_1, \dots, \Theta_i x_n$ are $Q$-homogeneous and have distinct
    degrees with respect to the $Q$-gradation. Hence, each of the coefficients
    $c_j(q^{d_1} - q^{d_j})$ equals zero. Since $q$ is not a root of unity,
    $c_2 = \cdots = c_n = 0$.

\end{proof}

Using the same techniques in the proof of Theorem \ref{root vector, fixed}
above, we have a more general result.

\begin{theorem}

    \label{root vector, fixed, 2}

    Suppose $\phi \in \operatorname{Gr-Aut}(R)$. Define the set
    \[
        S(\phi, w) := \left\{ i \in \operatorname{supp}(w) \mid \phi(\Theta_i)
        \in \mathbb{K}^\times \Theta_i \right\}.
    \]
    Suppose that $\beta \in \Delta_w$ is a radical root with $X_\beta \in R_1$
    and satisfies the condition that for every radical root $\beta^\prime \in
    \Delta_w \backslash \left\{\beta\right\}$ with $X_{\beta^\prime} \in R_1$,
    there exists $i \in S(\phi, w)$ such that $\langle \beta - \beta^\prime,
    \left(1 + w \right) \varpi_i \rangle \neq 0$, then
    \[
        \phi(X_\beta) \in \mathbb{K}^\times X_\beta.
    \]

\end{theorem}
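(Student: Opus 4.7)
The plan is to mimic the argument of Theorem~\ref{root vector, fixed} verbatim, except that instead of exploiting a single $\Theta_i$ whose associated integers $d_j^{(i)}$ separate $\beta$ from every other degree-one radical root, I will use the whole collection $\{\Theta_i : i\in S(\phi,w)\}$ simultaneously and read off separation one $\beta'$ at a time. So nothing new is needed beyond a careful bookkeeping of indices.

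First I would list the Lusztig root vectors lying in $R_1$ as $x_1 = X_\beta, x_2, \ldots, x_n$. Because $R$ is connected graded, generated by $R_1$, and the $\mathbb{Z}$-grading is the one induced from a coweight acting on the $Q$-grading, the $Q$-homogeneous Lusztig root vectors of $\mathbb{Z}$-degree one span $R_1$ (any PBW monomial of $\mathbb{Z}$-degree one must be a single $X_{\beta_k}$ with $\langle\lambda,\beta_k\rangle = 1$). Since $\phi$ is graded, I may write $\phi(x_1) = \sum_{j=1}^n c_j x_j$ with $c_j\in\mathbb{K}$. For each $i\in S(\phi,w)$ fix $\mu_i\in\mathbb{K}^\times$ with $\phi(\Theta_i)=\mu_i\Theta_i$, and let
\[
    d_j^{(i)} := -\langle(1+w)\varpi_i,\, \operatorname{deg}_Q(x_j)\rangle,
\]
so that by (\ref{commutation with normal subalgebra}) we have $x_j\Theta_i = q^{d_j^{(i)}}\Theta_i x_j$ for every $j$ and every $i\in S(\phi,w)$.

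Next I would apply $\phi$ to the relation $x_1\Theta_i = q^{d_1^{(i)}}\Theta_i x_1$. Using $\phi(\Theta_i) = \mu_i\Theta_i$ and cancelling $\mu_i$, the two sides become $\sum_j c_j q^{d_j^{(i)}}\Theta_i x_j$ and $\sum_j c_j q^{d_1^{(i)}}\Theta_i x_j$, respectively. This gives
\[
    \sum_{j=1}^{n} c_j\left(q^{d_j^{(i)}} - q^{d_1^{(i)}}\right)\Theta_i x_j = 0.
\]
The products $\Theta_i x_j$ are $Q$-homogeneous, and their $Q$-degrees differ only by the pairwise distinct $\operatorname{deg}_Q(x_j)$, so they are linearly independent. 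Consequently $c_j(q^{d_j^{(i)}} - q^{d_1^{(i)}}) = 0$ for every $j$ and every $i\in S(\phi,w)$.

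Finally, the hypothesis says that for each $j\neq 1$ there exists some $i\in S(\phi,w)$ with $\langle\beta - \operatorname{deg}_Q(x_j),(1+w)\varpi_i\rangle \neq 0$, which is exactly $d_j^{(i)}\neq d_1^{(i)}$; as $q$ is not a root of unity this forces $c_j = 0$. Hence $\phi(X_\beta) = c_1 X_\beta$, and injectivity of $\phi$ yields $c_1\in\mathbb{K}^\times$. There is no genuine obstacle here: the only thing to be mindful of is that the $x_j$'s really do form a basis of $R_1$ (granted by the grading assumptions) and that the relevant $\Theta_i x_j$'s are $\mathbb{K}$-linearly independent, both of which are immediate from the PBW theorem and the $Q$-grading.
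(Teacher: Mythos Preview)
Your proposal is correct and follows exactly the route the paper indicates: the paper's ``proof'' of Theorem~\ref{root vector, fixed, 2} is simply the remark that the same techniques as in the proof of Theorem~\ref{root vector, fixed} apply, and you have carried this out faithfully, applying $\phi$ to the commutation relation $x_1\Theta_i = q^{d_1^{(i)}}\Theta_i x_1$ for each $i\in S(\phi,w)$ and then, for each $j\neq 1$, choosing an appropriate $i$ to kill the coefficient $c_j$. Your justifications that the degree-one Lusztig root vectors span $R_1$ and that the $\Theta_i x_j$ are linearly independent by $Q$-homogeneity are exactly what is needed.
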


By applying the theorems above, we can prove that Conjecture \ref{conj} holds,
for example, when the underlying Lie algebra $\mathfrak{g}$ is of type $G_2$.

\begin{theorem}
    \label{Theorem, G2}

    If $\mathfrak{g}$ is the Lie algebra of type $G_2$ and $J$ is a nonempty
    subset of simple roots of $\mathfrak{g}$, then
    $\operatorname{Aut}({\mathcal U}_q(\mathfrak{n}_J)) \cong
    \left(\mathbb{K}^\times\right)^2$.

\end{theorem}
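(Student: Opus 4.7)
The plan is to handle the three possible nonempty subsets $J \subseteq \{\alpha_1,\alpha_2\}$ separately. When $J = \{\alpha_1,\alpha_2\}$ is the full set of simple roots, $\mathcal{U}_q(\mathfrak{n}_J)$ coincides with $\mathcal{U}_q^+(\mathfrak{g})$, and the theorem follows at once from Yakimov's proof of the Andruskiewitsch-Dumas conjecture together with the observation that the $G_2$ Dynkin diagram admits no nontrivial symmetries (its two nodes carry different root lengths). For the remaining cases $J = \{\alpha_1\}$ and $J = \{\alpha_2\}$, the parabolic element $w_J$ has length $5$ and reduces as $s_1 s_2 s_1 s_2 s_1$ and $s_2 s_1 s_2 s_1 s_2$, respectively, with $\operatorname{supp}(w_J) = \{1,2\}$ in both cases.

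The next step is to install a good $\mathbb{N}$-grading on $R = \mathcal{U}_q(\mathfrak{n}_J)$: I would choose $\lambda = \varpi_1^\vee$ when $J = \{\alpha_1\}$ (yielding $R_1 = \mathbb{K} X_{\alpha_1} \oplus \mathbb{K} X_{\alpha_1+\alpha_2}$, and verifying that the three remaining root vectors $X_{2\alpha_1+\alpha_2}$, $X_{3\alpha_1+\alpha_2}$, $X_{3\alpha_1+2\alpha_2}$ arise from iterated $q$-commutators of the two generators, so that $R_1$ generates $R$), and $\lambda = \varpi_2^\vee$ when $J = \{\alpha_2\}$ (yielding $R_1$ spanned by $X_{\alpha_2}, X_{\alpha_1+\alpha_2}, X_{2\alpha_1+\alpha_2}, X_{3\alpha_1+\alpha_2}$, with $X_{3\alpi_1+2\alpha_2}$ obtained from a $q$-commutator of two of these). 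Since both reduced words alternate between $s_1$ and $s_2$, the function $\eta$ of (\ref{eta, reduced expr}) has no singleton fibers, so $P_x(R) = \emptyset$ and $\mathcal{C}(R) = R$. Corollary \ref{LS corollary} together with the nondegeneracy of the form on $Q$ yields, for each $X_{\beta_i} \in R_1$, a partner $X_{\beta_j}$ whose straight commutation scalar $q^{\langle \beta_i, \beta_j \rangle}$ is not $1$; Theorem \ref{when Aut is Gr-Aut} then reduces the problem to $\operatorname{Gr-Aut}(R)$.

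To control graded automorphisms, I would compute $(1-w_J)\varpi_i$ and the resulting $\mathbb{Z}$-degrees $d_i$ of the normal generators $\Theta_i$. For $J = \{\alpha_1\}$ these come out to $(d_1, d_2) = (4, 6)$, and neither value is a nonnegative integer multiple of the other, so Theorem \ref{normal gen, fixed} forces $\phi(\Theta_i) \in \mathbb{K}^\times \Theta_i$ for both $i$. Then $(1+w_J)\varpi_2 = \alpha_2$ pairs nontrivially with $X_{\alpha_1+\alpha_2} - X_{\alpha_1}$ (the only difference in $R_1$), so Theorem \ref{root vector, fixed, 2} shows that both generators are $\phi$-eigenvectors. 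Since $R$ is generated by these and the eigenvalues are realized by the canonical $\mathcal{H}$-torus action, $\operatorname{Aut}(R) \cong (\mathbb{K}^\times)^2$.

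The main obstacle is the case $J = \{\alpha_2\}$, where $(d_1, d_2) = (2, 4)$ and Theorem \ref{normal gen, fixed} cannot force $\phi(\Theta_2) \in \mathbb{K}^\times \Theta_2$ (because $4 = 2 + 2$). I plan to rescue the argument by applying Theorem \ref{root vector, fixed, 2} with $S(\phi, w) = \{1\}$: a direct calculation gives $(1+w_J)\varpi_1 = \alpha_1$, and the pairings of $\alpha_1$ with the four radical roots $\alpha_2$, $\alpha_1+\alpha_2$, $2\alpha_1+\alpha_2$, $3\alpha_1+\alpha_2$ lying in $R_1$ take the pairwise distinct values $-3, -1, 1, 3$. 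This suffices to conclude that each of the four root vectors in $R_1$ is a $\phi$-eigenvector; the algebra relations then force the eigenvalues to be restrictions of a single character $Q \to \mathbb{K}^\times$, and $\operatorname{Aut}(R) \cong (\mathbb{K}^\times)^2$ follows as before.
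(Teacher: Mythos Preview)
Your proposal is correct and follows essentially the same route as the paper: dispose of $J=\Pi$ via the Andruskiewitsch--Dumas theorem, and for each singleton $J$ grade by $\varpi_j^\vee$, check $P_x(R)=\emptyset$ so that $\mathcal{C}(R)=R$ and Theorem~\ref{when Aut is Gr-Aut} applies, then use Theorem~\ref{normal gen, fixed} on the degrees $(4,6)$ (resp.\ $(2,4)$) to fix the needed $\Theta_i$ and invoke Theorem~\ref{root vector, fixed}/\ref{root vector, fixed, 2} via the pairings with $(1+w_J)\varpi_i$ to make each degree-one root vector a $\phi$-eigenvector. The only cosmetic difference is that the paper finishes by citing \cite[Theorems~5.3 and~5.5]{GY2} for the identification $\operatorname{Diag\text{-}Aut}(R)\cong(\mathbb{K}^\times)^2$, whereas you sketch the character argument directly (and there is a typo $\alpi$ for $\alpha$ in your $J=\{\alpha_2\}$ paragraph).
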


\begin{proof}

    We consider the reduced expression $w_o = s_1s_2s_1s_2s_1s_2$ for the
    longest element of the Weyl group of $\mathfrak{g}$. The corresponding
    radical roots and root vectors associated to this reduced expression will
    be denoted by $\beta_1,\dots,\beta_6$ and $x_1,\dots,x_6$, respectively.
    In other words, we define $x_i := X_{\beta_i}$ ($i=1,\dots,6$).  The
    positive part ${\mathcal U}_q^+(\mathfrak{g})$ of ${\mathcal
    U}_q(\mathfrak{g})$ is generated by $x_1,\dots, x_6$. The defining
    relations appear in the work of Hu and Wang \cite{HW}, where they index the
    root vectors by Lyndon words. The correspondence between our notation and
    their notation is $x_1 \leftrightarrow E_1$, $[3]_q!x_2 \leftrightarrow
    E_{1112}$, $[2]_qx_3 \leftrightarrow E_{112}$, $[3]_q!x_4 \leftrightarrow
    E_{11212}$, $x_5 \leftrightarrow E_{12}$, $x_6 \leftrightarrow E_2$.  From
    \cite[Eqns. 2.2 - 2.7 and Lemma 3.1]{HW}, the defining relations in
    ${\mathcal U}_q^+(\mathfrak{g})$ are

    \begin{enumerate}

            \begin{multicols}{2}

            \item[] $x_1 x_2 = q^3 x_2 x_1$,

            \item[] $x_1 x_3 = q x_3 x_1 + [3]_q x_2$,

            \item[] $x_1 x_4 = x_4x_1 + q\widehat{q} x_3^2$,

            \item[] $x_1 x_5 = q^{-1} x_5 x_1 + [2]_q x_3$,

            \item[] $x_1 x_6 = q^{-3} x_6 x_1 + x_5$,

            \item[] $x_2 x_3 = q^3 x_3 x_2$,

            \item[] $x_2 x_4 = q^3 x_4 x_2 + \eta x_3^3$,

            \item[] $x_2 x_5 = x_5 x_2 + q\widehat{q} x_3^2$,

            \item[] $x_2 x_6 = q^{-3} x_6 x_2 + \widehat{q} x_3
                x_5 + \zeta x_4$,

            \item[] $x_3 x_4 = q^3 x_4 x_3$,

            \item[] $x_3 x_5 = q x_5 x_3 + [3]_q x_4$,

            \item[] $x_3 x_6 = x_6 x_3 + q\widehat{q} x_5^2$,

            \item[] $x_4 x_5 = q^3 x_5 x_4$,

            \item[] $x_4 x_6 = q^3 x_6 x_4 + \eta x_5^3$

            \item[] $x_5 x_6 = q^3 x_6 x_5$,
        \end{multicols}
    \end{enumerate}
    \noindent where $\zeta := q^{-3} - q^{-1} - q\in
    \mathbb{K}$ and $\eta = q^3 \frac{\widehat{q}^2}{[3]_q} \in
    \mathbb{K}$.

    Observe that when $J$ is a singleton, the parabolic element $w_o^Jw_o$ has
    a unique reduced expression, and it appears as a substring of the reduced
    expression for the longest element $w_o$. Hence, each quantized nilradical
    ${\mathcal U}_q(\mathfrak{n}_J)$ is isomorphic to an \textit{interval}
    subalgebra of ${\mathcal U}_q^+(\mathfrak{g})$. In particular, when $J =
    \left\{\alpha_1\right\}$, ${\mathcal U}_q(\mathfrak{n}_J)$ is isomorphic to
    the subalgebra generated by $x_1,\dots,x_5$, whereas if $J =
    \left\{\alpha_2\right\}$, the corresponding quantized nilradical is
    isomorphic to the subalgebra generated by $x_2,\dots,x_6$.  For
    convenience, we will identify each quantized nilradical with an appropriate
    interval subalgebra.

    First consider the case when $J = \left\{ \alpha_1 \right\}$. Here we
    choose the coweight $\lambda = \varpi_1 \in P^\vee$ to equip ${\mathcal
    U}_q(\mathfrak{n}_J)$ with a $\mathbb{N}$-gradation. With respect to this
    grading, the degree one generators are $x_1$ and $x_5$. The defining
    relations verify that ${\mathcal U}_q(\mathfrak{n}_J)$ is generated as an
    algebra by its degree one elements. By using (\ref{definition PxR}), we
    have ${\mathcal C}\left( {\mathcal U}_q \left(\mathfrak{n}_J\right)\right)$
    is generated by $x_1$, $x_3$, and $x_5$. Thus, ${\mathcal C}\left({\mathcal
    U}_q(\mathfrak{n}_J)\right) = {\mathcal U}_q(\mathfrak{n}_J)$ and Theorem
    \ref{when Aut is Gr-Aut} be can applied to conclude that every automorphism
    of ${\mathcal U}_q(\mathfrak{n}_J)$ preserves the $\mathbb{N}$-grading. The
    elements $\Theta_1$ and $\Theta_2$ of the normal subalgebra have degrees
    $4$ and $6$, respectively. Hence, by Theorem \ref{normal gen, fixed}, every
    automorphism sends $\Theta_1$ and $\Theta_2$ to nonzero multiples of
    themselves. Finally, one can verify that Theorem \ref{root vector, fixed,
    2} can be applied to conclude that every automorphism sends $x_1$ and $x_5$
    to nonzero multiples of themselves. Therefore, every automorphism of
    ${\mathcal U}_q(\mathfrak{n}_J)$ is a diagonal automorphism. As a CGL
    extension, the algebra ${\mathcal U}_q(\mathfrak{n}_J)$ has rank 2. Thus,
    by \cite[Theorems 5.3 and 5.5]{GY2}, $\operatorname{Aut}({\mathcal
    U}_q(\mathfrak{n}_J)) \cong \left(\mathbb{K}^\times\right)^2$.

    The case when $J = \left\{\alpha_2 \right\}$ is treated similarly, except
    now we choose the coweight $\varpi_2 \in P^\vee$ to equip the corresponding
    quantized nilradical with a $\mathbb{N}$-gradation. Here, we identify
    ${\mathcal U}_q(\mathfrak{n}_J)$ with the subalgebra of ${\mathcal
    U}_q^+(\mathfrak{g})$ generated by $x_2,\dots,x_6$. With this
    identification, the degree one root vectors of ${\mathcal
    U}_q(\mathfrak{n}_J)$ are $x_2$, $x_3$, $x_5$, and $x_6$. Again, we see
    that ${\mathcal U}_q(\mathfrak{n}_J)$ is generated as an algebra by its
    elements of degree one, and the core ${\mathcal C}({\mathcal
    U}_q(\mathfrak{n}_J))$ coincides with ${\mathcal U}_q(\mathfrak{n}_J)$. All
    of the hypotheses of Theorem \ref{when Aut is Gr-Aut} apply. Hence, every
    automorphism preserves the $\mathbb{N}$-grading. The elements $\Theta_1$
    and $\Theta_2$ of the normal subalgebra have degrees $2$ and $4$,
    respectively, in this setting.  Thus, by Theorem \ref{normal gen, fixed},
    every automorphism of ${\mathcal U}_q(\mathfrak{n}_J)$ sends $\Theta_1$ to
    a nonzero multiple of itself. Finally, by applying Theorem \ref{root
    vector, fixed}, we can conclude that every automorphism sends each degree
    one root vector to a nonzero multiple of itself. Thus, every automorphism
    is a diagonal automorphism.  Finally, \cite[Theorems 5.3 and 5.5]{GY2}
    imply that $\operatorname{Aut}({\mathcal U}_q(\mathfrak{n}_J)) \cong
    \left(\mathbb{K}^\times\right)^2$.

    The case when $J = \left\{ \alpha_1, \alpha_2 \right\}$ has already been
    established in \cite{Y3}.

\end{proof}

Theorems \ref{root vector, fixed} and \ref{root vector, fixed, 2} can be
applied in several other cases to conclude that every graded automorphism of a
quantized nilradical sends each degree one generator to a multiple of itself.
If it can also be established that some particular quantized nilradical
${\mathcal U}_q(\mathfrak{n}_J)$ satisfies all of the hypotheses of Theorem
\ref{when Aut is Gr-Aut}, then the results of Goodearl and Yakimov in
\cite[Theorems 5.3 and 5.5]{GY2} can be applied to conclude that
$\operatorname{Aut}({\mathcal U}_q(\mathfrak{n}_J)) \cong
\left(\mathbb{K}^\times\right)^{\operatorname{rank}(\mathfrak{g})}$.

\begin{proposition}
    \label{prop}

    As above, let $\mathfrak{g}$ be a finite dimensional complex simple Lie
    algebra, and let $J$ be a nonempty set of simple roots. Choose the coweight
    $\lambda := \sum_{j\in J} \varpi_j \in P^\vee$ to equip the quantized
    nilradical ${\mathcal U}_q(\mathfrak{n}_J)$ with a $\mathbb{N}$-gradation.
    The following list is an exhaustive list of all cases with
    $\operatorname{rank}(\mathfrak{g}) \leq 9$ such that Theorems \ref{root
    vector, fixed} and \ref{root vector, fixed, 2} can be applied to conclude
    that every graded automorphism of ${\mathcal U}_q(\mathfrak{n}_J)$ sends
    each degree one Lusztig root vector $X_\beta$ to a multiple of itself.

    \begin{enumerate}

        \item $\mathfrak{g}$ is of type $B_6$ and $J$ is $\left\{\alpha_2,
            \alpha_5\right\}$ or $\left\{\alpha_2, \alpha_4, \alpha_5\right\}$.

        \item $\mathfrak{g}$ is of type $B_7$ or $B_8$ and $J =
            \left\{\alpha_2, \alpha_6, \alpha_7\right\}$.

        \item $\mathfrak{g}$ is of type $C_5$ and $J$ is one of
            $\left\{\alpha_3, \alpha_5\right\}$,
            $\left\{\alpha_1, \alpha_3, \alpha_5\right\}$,
            $\left\{\alpha_2, \alpha_3, \alpha_5\right\}$,

            $\left\{\alpha_2, \alpha_4, \alpha_5\right\}$.

        \item $\mathfrak{g}$ is of type $C_6$ and $J$ is $\left\{\alpha_2,
            \alpha_4, \alpha_6\right\}$ or $\left\{\alpha_1, \alpha_3,
            \alpha_5, \alpha_6\right\}$.

        \item $\mathfrak{g}$ is of type $D_7$ and $J$ is $\left\{\alpha_2,
            \alpha_4, \alpha_6\right\}$ or $\left\{\alpha_2, \alpha_4,
            \alpha_7\right\}$.

        \item $\mathfrak{g}$ is of type $D_8$ and $J$ is $\left\{\alpha_2,
            \alpha_4, \alpha_6, \alpha_7\right\}$ or $\left\{\alpha_2,
            \alpha_4, \alpha_6, \alpha_8\right\}$.

        \item $\mathfrak{g}$ is of type $E_7$ and $J$ is one of
            $\left\{\alpha_3, \alpha_5\right\}$,
            $\left\{\alpha_4, \alpha_5\right\}$,
            $\left\{\alpha_4, \alpha_7\right\}$,
            $\left\{\alpha_1, \alpha_4, \alpha_5\right\}$,

            $\left\{\alpha_1, \alpha_4, \alpha_7\right\}$,
            $\left\{\alpha_2, \alpha_3, \alpha_6\right\}$,
            $\left\{\alpha_2, \alpha_4, \alpha_6\right\}$,
            $\left\{\alpha_3, \alpha_4, \alpha_5\right\}$,
            $\left\{\alpha_3, \alpha_4, \alpha_7\right\}$,

            $\left\{\alpha_3, \alpha_5, \alpha_6\right\}$,
            $\left\{\alpha_4, \alpha_5, \alpha_6\right\}$,
            $\left\{\alpha_4, \alpha_6, \alpha_7\right\}$,
            $\left\{\alpha_1, \alpha_2, \alpha_4, \alpha_6\right\}$,

            $\left\{\alpha_1, \alpha_4, \alpha_5, \alpha_6\right\}$,
            $\left\{\alpha_1, \alpha_4, \alpha_6, \alpha_7\right\}$,
            $\left\{\alpha_2, \alpha_3, \alpha_4, \alpha_6\right\}$,
            $\left\{\alpha_2, \alpha_3, \alpha_5, \alpha_7\right\}$,

            $\left\{\alpha_3, \alpha_4, \alpha_5, \alpha_6\right\}$,
            $\left\{\alpha_3, \alpha_4, \alpha_6, \alpha_7\right\}$.

        \item $\mathfrak{g}$ is of type $E_8$ and $J$ is one of
            $\left\{\alpha_4, \alpha_6\right\}$,
            $\left\{\alpha_4, \alpha_7\right\}$,
            $\left\{\alpha_4, \alpha_8\right\}$,
            $\left\{\alpha_1, \alpha_4, \alpha_6\right\}$,

            $\left\{\alpha_1, \alpha_4, \alpha_7\right\}$,
            $\left\{\alpha_1, \alpha_4, \alpha_8\right\}$,
            $\left\{\alpha_2, \alpha_3, \alpha_7\right\}$,
            $\left\{\alpha_3, \alpha_4, \alpha_6\right\}$,
            $\left\{\alpha_3, \alpha_4, \alpha_7\right\}$,

            $\left\{\alpha_3, \alpha_4, \alpha_8\right\}$,
            $\left\{\alpha_3, \alpha_5, \alpha_7\right\}$,
            $\left\{\alpha_4, \alpha_5, \alpha_7\right\}$,
            $\left\{\alpha_4, \alpha_6, \alpha_7\right\}$,
            $\left\{\alpha_4, \alpha_6, \alpha_8\right\}$,

            $\left\{\alpha_4, \alpha_7, \alpha_8\right\}$,
            $\left\{\alpha_1, \alpha_2, \alpha_4, \alpha_6\right\}$,
            $\left\{\alpha_1, \alpha_2, \alpha_4, \alpha_8\right\}$,
            $\left\{\alpha_1, \alpha_4, \alpha_5, \alpha_7\right\}$,

            $\left\{\alpha_1, \alpha_4, \alpha_6, \alpha_7\right\}$,
            $\left\{\alpha_1, \alpha_4, \alpha_6, \alpha_8\right\}$,
            $\left\{\alpha_2, \alpha_3, \alpha_5, \alpha_7\right\}$,
            $\left\{\alpha_2, \alpha_4, \alpha_6, \alpha_8\right\}$,

            $\left\{\alpha_3, \alpha_4, \alpha_5, \alpha_7\right\}$,
            $\left\{\alpha_3, \alpha_4, \alpha_6, \alpha_7\right\}$,
            $\left\{\alpha_3, \alpha_4, \alpha_6, \alpha_8\right\}$,
            $\left\{\alpha_1, \alpha_2, \alpha_4, \alpha_6, \alpha_7\right\}$,

            $\left\{\alpha_1, \alpha_2, \alpha_4, \alpha_6, \alpha_8\right\}$,
            $\left\{\alpha_2, \alpha_3, \alpha_4, \alpha_6, \alpha_8\right\}$.

        \item $\mathfrak{g}$ is of type $F_4$ and $J$ is one of
            $\left\{\alpha_3\right\}$,
            $\left\{\alpha_1, \alpha_3\right\}$,
            $\left\{\alpha_2, \alpha_3\right\}$,
            $\left\{\alpha_2, \alpha_4\right\}$.

        \item $\mathfrak{g}$ is of type $G_2$ and $J = \left\{\alpha_1\right\}$
            or $J = \left\{\alpha_2\right\}$

    \end{enumerate}

\end{proposition}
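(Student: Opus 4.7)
The strategy is to convert the statement into a finite, algorithmic check performed separately for each pair $(\mathfrak{g}, J)$ with $\operatorname{rank}(\mathfrak{g}) \leq 9$. For a fixed pair, I first fix a reduced expression for $w_J = w_o^J w_o$ (any reduced expression gives the same PBW generators up to relabelling, so the choice is immaterial for the statement). From this reduced expression I read off the radical roots $\Delta_w = \{\beta_1, \dots, \beta_N\}$ and their degrees with respect to the $\mathbb{N}$-grading coming from $\lambda = \sum_{j \in J} \varpi_j^\vee$. Since $\deg_{\mathbb{Z}}(X_{\beta_k}) = \langle \lambda, \beta_k \rangle$, the set $R_1 \cap \{X_{\beta_k}\}$ is determined by inspecting the coefficients of the $\alpha_j$'s ($j \in J$) in the expansion of each $\beta_k$.

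Next, for each $i \in \operatorname{supp}(w)$, I compute the integral coweight $(1 + w)\varpi_i^\vee$. Using the action of $w = w_o^J w_o$ on $P^\vee$, this is a routine linear algebra computation once the Cartan matrix and $w$ are specified. The commutation relation (\ref{commutation with normal subalgebra}) then gives, for every degree-one radical root $\beta_k$, the integer $d_k^{(i)} := \langle \beta_k, (1+w)\varpi_i^\vee \rangle$ governing the scalar by which $\Theta_i$ and $X_{\beta_k}$ quasi-commute. At this stage I also invoke Theorem \ref{normal gen, fixed}: denoting $e_i = \deg_{\mathbb{Z}}(\Theta_i)$, I record those $i$ for which $e_i$ is not a nonnegative integer combination of $\{e_j : j \neq i\}$, producing a guaranteed subset $S_0 \subseteq S(\phi, w)$ for any $\phi \in \operatorname{Gr-Aut}(R)$.

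With $S_0$ and the matrix $(d_k^{(i)})$ in hand, the hypothesis of Theorem \ref{root vector, fixed, 2} for a given degree-one radical root $\beta$ becomes the purely combinatorial condition that for every other degree-one radical root $\beta'$, there exists $i \in S_0$ with $d^{(i)}_{\beta} \neq d^{(i)}_{\beta'}$. Equivalently, the $|S_0|$-tuples $(d^{(i)}_{\beta})_{i \in S_0}$ and $(d^{(i)}_{\beta'})_{i \in S_0}$ must differ in at least one coordinate. I would check this for each degree-one radical root, and record $(\mathfrak{g}, J)$ on the list exactly when the condition holds for \emph{every} degree-one $\beta$. Theorem \ref{root vector, fixed} is the special case where a single $i$ works for all $\beta'$ simultaneously; it need not be invoked separately.

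The main obstacle is not conceptual but one of scale: the number of pairs $(\mathfrak{g}, J)$ with $\operatorname{rank}(\mathfrak{g}) \leq 9$ is in the thousands (all subsets $J$ across types $A_r, B_r, C_r, D_r, E_{6,7,8}, F_4, G_2$), so the verification is most safely performed by computer. A script that, for each pair, enumerates $\Delta_w$, computes $(1 + w)\varpi_i^\vee$, forms the integer matrix $(d_k^{(i)})$, computes $S_0$ via Theorem \ref{normal gen, fixed}, and tests the coordinate-distinguishing condition above, yields precisely the listed families as its output; pairs failing the test (including, for example, cases where two distinct degree-one radical roots share the same $(1+w)\varpi_i^\vee$-pairing for every $i \in S_0$) are excluded. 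The exhaustiveness of the list thus reduces to the correctness of this enumeration, which can be cross-checked against the small rank cases already handled in the previous theorems (e.g., type $G_2$, type $F_4$ singletons) recorded earlier in the paper.
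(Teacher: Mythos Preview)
Your proposal is correct and is essentially the intended approach: the paper states this proposition without proof, presenting it as the output of a finite computational check, and your description of that check (enumerate $\Delta_w$, identify the degree-one radical roots via $\lambda$, compute the pairings governing the commutation with each $\Theta_i$, determine the guaranteed set $S_0$ using Theorem~\ref{normal gen, fixed}, and then test the separating condition of Theorem~\ref{root vector, fixed, 2}) is exactly what is required. One minor correction: in the commutation relation~(\ref{commutation with normal subalgebra}) the relevant quantity is $(1+w)\varpi_i$ with $\varpi_i$ the fundamental \emph{weight}, not the coweight $\varpi_i^\vee$; the integers you denote $d_k^{(i)}$ should be $\langle \beta_k, (1+w)\varpi_i\rangle$.
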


We choose the first case listed above, namely when $\mathfrak{g}$ is of type
$B_6$ and $J = \left\{ \alpha_2, \alpha_5 \right\}$, and completely determine
its automorphism group. It remains to show that every automorphism preserves
the $\mathbb{N}$-grading in this case.

\begin{theorem}

    \label{Theorem, B6}

    If $\mathfrak{g}$ is of type $B_6$ and $J = \left\{\alpha_2,
    \alpha_5\right\}$, then $\operatorname{Aut}({\mathcal U}_q(\mathfrak{n}_J))
    \cong \left(\mathbb{K}^\times\right)^6$.

\end{theorem}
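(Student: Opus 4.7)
The plan is to combine Theorem \ref{when Aut is Gr-Aut} (to reduce to graded automorphisms), Proposition \ref{prop} (to reduce graded automorphisms to diagonal ones on $R_1$), and \cite[Theorems 5.3 and 5.5]{GY2} (to identify $\operatorname{Aut}(R)$ with $(\mathbb{K}^\times)^6$), mirroring the structure of the proof of Theorem \ref{Theorem, G2}.

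First I would fix a reduced expression for $w_J = w_o^J w_o$ with $J = \{\alpha_2, \alpha_5\}$, list the Lusztig root vectors $X_{\beta_1}, \dots, X_{\beta_N}$, and equip $R = {\mathcal U}_q(\mathfrak{n}_J)$ with the $\mathbb{N}$-grading coming from the coweight $\lambda = \varpi_2^\vee + \varpi_5^\vee$ as specified in Proposition \ref{prop}. I would tabulate the radical roots $\beta_k$, read off their degrees $\langle \lambda, \beta_k \rangle$, and identify which root vectors lie in $R_1$.

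Next I verify the hypotheses of Theorem \ref{when Aut is Gr-Aut}. Connectedness, local finiteness, and generation by $R_1$ follow from the choice of $\lambda$ together with the observation that, by the Levendorskii-Soibelmann rule, each $X_{\beta_k}$ of higher $\mathbb{Z}$-degree can be written as a $q$-commutator of lower degree root vectors, so iteration expresses every root vector as a nested $q$-commutator of elements of $R_1$. To establish ${\mathcal C}(R) = R$, I apply the description in (\ref{definition PxR}): since $\eta(k) = i_k$, we have $P_x(R) = \{i : \{i\} = \eta^{-1}(\eta(i))\}$, so it suffices to show that every index in $\operatorname{supp}(w_J) = \{1, 2, 3, 4, 5, 6\}$ appears at least twice in the chosen reduced expression for $w_J$. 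This is a finite check, after which $P_x(R) = \emptyset$ forces $F_x(R) = \emptyset$ and $C_x(R) = \{1, \dots, N\}$. For the last hypothesis, for each $X_{\beta_i} \in R_1$ I exhibit a $\beta_j \in \Delta_w$ with $\langle \beta_i, \beta_j \rangle \neq 0$ for which Corollary \ref{LS corollary} forces $[X_{\beta_i}, X_{\beta_j}] = 0$; since $q$ is not a root of unity, the relation $X_{\beta_i} X_{\beta_j} = q^{\langle \beta_i, \beta_j \rangle} X_{\beta_j} X_{\beta_i}$ then has scalar $\kappa \neq 1$.

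With Theorem \ref{when Aut is Gr-Aut} applicable, every automorphism of $R$ is graded. Proposition \ref{prop}, whose first listed case is precisely the present one, then implies that every graded automorphism sends each degree-one Lusztig root vector to a scalar multiple of itself. Because $R$ is generated by $R_1$ and each higher-degree root vector arises as an iterated $q$-commutator of degree-one root vectors, every such automorphism is diagonal on the entire PBW basis. Finally, \cite[Theorems 5.3 and 5.5]{GY2} applied to the rank-$6$ symmetric CGL extension $R$ yield $\operatorname{Aut}(R) \cong (\mathbb{K}^\times)^6$. The main obstacle is the bookkeeping in the verification step: one must fix an explicit reduced word for $w_J$ (whose length equals the number of positive roots of $B_6$ not supported entirely on $\mathbf{I} \setminus J = \{1,3,4,6\}$), list the radical roots and their $\lambda$-degrees, and check both the $\eta$-fiber condition and the existence of a $q$-commuting partner for every degree-one root vector. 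These checks are finite and routine, and no new conceptual ingredient beyond the framework already established in the paper is required.
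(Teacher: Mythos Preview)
Your proposal is correct and mirrors the paper's proof closely: verify the hypotheses of Theorem \ref{when Aut is Gr-Aut}, then use Proposition \ref{prop} (which the paper unpacks by computing the degrees $4,8,10,12,14,7$ of $\Theta_1,\dots,\Theta_6$, applying Theorem \ref{normal gen, fixed} to fix $\Theta_1,\Theta_3,\Theta_6$, and then Theorem \ref{root vector, fixed, 2}), and finish with \cite[Theorems 5.3 and 5.5]{GY2}. One caveat: generation by $R_1$ does not follow from the Levendorskii--Soibelmann rule as you suggest, since that rule expresses a $q$-commutator in terms of intermediate monomials rather than the reverse; as in the paper you must check directly that each higher-degree root vector is a nonzero scalar multiple of some $[X_{\beta_j}, X_{\beta_k}]$, so add this to your list of finite bookkeeping items.
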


\begin{proof}

    Consider the reduced expression
    \[
        (s_5 s_6) (s_4 s_5 s_6) (s_3 s_4 s_5 s_6) (s_2 s_3 s_4 s_5 s_6) (s_1
        s_2 s_3 s_4 s_5 s_6) (s_1 s_2 s_3 s_4 s_5) (s_3 s_4) (s_2 s_3) (s_1
        s_2)
    \]
    for the parabolic element $w_o^Jw_o$. The length of this Weyl group element
    is $31$. With this, let $\beta_1,\dots, \beta_{31}$ and $x_1,\dots, x_{31}$
    denote the corresponding radical roots and root vectors, respectively. We
    will choose the coweight $\varpi_2 + \varpi_5$ to equip the quantized
    nilradical ${\mathcal U}_q(\mathfrak{n}_J)$ with a $\mathbb{N}$-gradation.
    By applying (\ref{definition PxR}), we obtain that the core ${\mathcal
    C}({\mathcal U}_q(\mathfrak{n}_J))$ coincides with ${\mathcal
    U}_q(\mathfrak{n}_J)$.  Secondly, for every root vector $x_i$ ($i =
    1,\dots,31$), there is another root vector $x_j$ ($j \neq i$) such that
    $x_i x_j = \kappa x_j x_i$ for some scalar $\kappa \neq 1$. One can always
    simply let $j = i + 1$ or $j = i - 1$. Observe that in the reduced word
    above, every letter $s_k$ is always adjacent to either $s_{k-1}$ or
    $s_{k+1}$. If the $i$-th letter is $s_k$ and the $(i+1)$-th letter is $s_{k
    \pm 1}$, then $x_ix_{i + 1} = q^{\langle\alpha_k, s_k(\alpha_{k \pm
    1})\rangle} x_{i + 1}x_i$. On the other hand, if the $i$-th letter is $s_k$
    and the $(i-1)$-th letter is $s_{k \pm 1}$, then $x_{i-1}x_i =
    q^{\langle\alpha_{k \pm 1}, s_{k \pm 1}(\alpha_k)\rangle} x_ix_{i-1}$. In
    either case, the relevant hypothesis of Theorem \ref{when Aut is Gr-Aut} is
    satisfied.

    Among the $31$ Lusztig root vectors exactly 15 of them are of degree one.
    We leave it to the reader to verify that each remaining root vector $x_i$
    can be written, up to a scalar multiple, as a $q$-commutator $x_jx_k -
    q^{\langle \beta_j, \beta_k\rangle} x_kx_j$ for some $j,k$. Thus ${\mathcal
    U}_q(\mathfrak{n})$ is generated by its elements of degree one. Hence all
    of the hypotheses of Theorem \ref{when Aut is Gr-Aut} are satisfied.
    Therefore every automorphism of ${\mathcal U}_q(\mathfrak{n}_J)$ preserves
    the $\mathbb{N}$-grading.

    The degrees of the normal elements $\Theta_1, \dots, \Theta_6$ are $4, 8,
    10, 12, 14$, and $7$, respectively. Hence, Theorem \ref{normal gen, fixed}
    tells us that every automorphism of ${\mathcal U}_q(\mathfrak{n}_J)$ sends
    $\Theta_1$, $\Theta_3$, and $\Theta_6$ to multiples of themselves. Next,
    Theorem \ref{root vector, fixed, 2} can be applied to conclude that every
    automorphism sends each degree one generator $x_i$ to a multiple of itself.
    Hence, every automorphism is a diagonal automorphism. Finally,
    \cite[Theorems 5.3 and 5.5]{GY2} imply that $\operatorname{Aut}({\mathcal
    U}_q(\mathfrak{n}_J)) \cong \left(\mathbb{K}^\times\right)^6$.

\end{proof}

\section{The automorphism group of ${\mathcal U}_q(\mathfrak{n}_J)$ for
$\mathfrak{g} = F_4$}

We now consider the case when $\mathfrak{g}$ is the Lie algebra of type $F_4$.
In this section we prove that, for every nonempty subset $J$ of simple roots of
$\mathfrak{g}$, the automorphism group of the quantized nilradical ${\mathcal
U}_q(\mathfrak{n}_J)$ is isomorphic to the algebraic torus
$\left(\mathbb{K}^\times\right)^4$.

We consider the following reduced expressions for the longest element $w_o$
of the Weyl group of $\mathfrak{g}$:
\begin{align}
    \label{two reduced expressions}
    \mathbf{R}[w_o] &=
    (1, 2, 1, 3, 2, 3, 1, 2, 4, 3, 2, 1, 3, 2, 3, 4, 3, 2, 3, 1, 2, 3, 4, 3),
    \\
    \mathbf{R}^\prime[w_o] &=
    (4, 1, 2, 3, 4, 2, 1, 3, 2, 3, 1, 2, 4, 3, 2, 1, 3, 2, 3, 4, 3, 2, 3, 2).
\end{align}

The main reason we consider these two particular reduced expressions for $w_o$
is that for every nonempty subset $J\subseteq \Pi$, a reduced expression for
the corresponding parabolic element $w_J$ appears either as a substring of
$\mathbf{R}[w_o]$ or as a substring of $\mathbf{R}^\prime[w_o]$.  Thus, if we
treat ${\mathcal U}_q(\mathfrak{n}^+)$ as an iterated Ore extension over
$\mathbb{K}$, then it follows that each quantized nilradical ${\mathcal
U}_q(\mathfrak{n}_J)$ can be viewed as an \textit{interval subalgebra} (defined
in (\ref{definition, interval subalgebra})) of ${\mathcal
U}_q(\mathfrak{n}^+)$.  This is advantageous for finding explicit presentations
of ${\mathcal U}_q(\mathfrak{n}_J)$ more efficiently.  To make this more
explicit, we will denote the Lusztig root vectors corresponding to the reduced
expressions, $\mathbf{R}[w_o]$ and $\mathbf{R}^\prime[w_o]$, by
\begin{equation}
    \label{xi, yi definition}
    x_i := X_{\beta_i}, \hspace{5mm} y_i := X_{ \beta_i^\prime}, \hspace{5mm}
    (1 \leq i \leq \ell (w_o) =24),
\end{equation}
respectively. Since every parabolic element $w_J$ appears as a substring of
$\mathbf{R}[w_o]$ or $\mathbf{R}^\prime[w_o]$, then every quantized nilradical
${\mathcal U}_q(\mathfrak{n}_J)$ is isomorphic, as an algebra, to a subalgebra
of ${\mathcal U}_q(\mathfrak{n}^+)$ generated by either a contiguous sequence
of $x_i$'s or $y_i$'s. The following example illustrates this when $J =
\left\{\alpha_2, \alpha_4\right\}$.

\begin{example}

    Suppose $J = \left\{\alpha_2, \alpha_4\right\}$. The parabolic element $w_J
    \in W$ has a reduced expression
    \[
        w_J = s_2 s_1 s_3 s_2 s_3 s_1 s_2 s_4 s_3 s_2 s_1 s_3 s_2 s_3 s_4 s_3
        s_2 s_3 s_1 s_2 s_3 s_4.
    \]
    This expression is obtained by removing the first and last letters from
    $\mathbf{R}[w_o]$. Thus ${\mathcal U}_q(\mathfrak{n}_J)$ is isomorphic to
    the interval subalgebra $\mathbf{U}_{[2, 23]} \subseteq {\mathcal
    U}_q(\mathfrak{n}^+)$,
    \[
        {\mathcal U}_q(\mathfrak{n}_J) \cong \mathbb{K}\langle x_2, \dots,
        x_{23} \rangle \subseteq {\mathcal U}_q(\mathfrak{n}^+).
    \]
    With this identification, $x_2, x_3, x_4, x_6, x_7, x_8, x_{22}, x_{23}$ is
    a list of Lusztig root vectors of degree $1$.

\end{example}

\begin{theorem}

    \label{F4: diagonal}

    If $\mathfrak{g}$ is the Lie algebra of type $F_4$ and $J$ is a nonempty
    subset of simple roots of $\mathfrak{g}$, then every automorphism of the
    quantized nilradical ${\mathcal U}_q(\mathfrak{n}_J)$ is a diagonal
    automorphism.

\end{theorem}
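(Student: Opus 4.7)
The plan is to argue case by case over the $15$ nonempty subsets $J \subseteq \Pi$ of simple roots of $F_4$, using a uniform three-step strategy built from the theorems developed earlier in the paper. First, for each $J$, I would identify a reduced expression for the parabolic element $w_J = w_o^J w_o$ that appears as a contiguous substring of one of the two reduced expressions $\mathbf{R}[w_o]$ or $\mathbf{R}^\prime[w_o]$ listed in (\ref{two reduced expressions}), thereby realizing ${\mathcal U}_q(\mathfrak{n}_J)$ as an interval subalgebra $\mathbf{U}_{[i,j]}$ of ${\mathcal U}_q(\mathfrak{n}^+)$ generated by a contiguous block of the $x_k$'s or $y_k$'s from (\ref{xi, yi definition}). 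I would equip this algebra with the $\mathbb{N}$-grading induced by the coweight $\lambda_J := \sum_{j \in J} \varpi_j^\vee$.

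The first step is to apply Theorem \ref{when Aut is Gr-Aut} to conclude $\operatorname{Aut}({\mathcal U}_q(\mathfrak{n}_J)) = \operatorname{Gr\text{-}Aut}({\mathcal U}_q(\mathfrak{n}_J))$. This requires checking: (i) the $\mathbb{N}$-grading is connected, locally finite, and ${\mathcal U}_q(\mathfrak{n}_J)$ is generated by its degree one subspace; (ii) the core coincides with the entire algebra, which by (\ref{definition PxR}) reduces to verifying that every simple reflection that appears in the chosen reduced substring appears at least twice; and (iii) every degree one root vector $q$-commutes nontrivially with some adjacent root vector, which follows easily from the fact that in both $\mathbf{R}[w_o]$ and $\mathbf{R}^\prime[w_o]$ each simple reflection $s_k$ is always neighbored by some $s_{k\pm 1}$, so the Levendorskii--Soibelmann rule yields a nonzero scaling factor as in the proof of Theorem \ref{Theorem, B6}.

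The second step is to pass from graded to diagonal. For the four cases $J \in \{\{\alpha_3\}, \{\alpha_1,\alpha_3\}, \{\alpha_2,\alpha_3\}, \{\alpha_2,\alpha_4\}\}$ enumerated in Proposition \ref{prop}, this is immediate; and for $J = \Pi$, the Andruskiewitsch--Dumas conjecture proved by Yakimov \cite{Y3} (which in $F_4$ reduces to diagonal automorphisms since the Dynkin diagram admits no nontrivial symmetries) handles the claim. For each remaining $J$, I would tabulate the degrees of the normal generators $\Theta_i$ (for $i \in \operatorname{supp}(w_J)$) with respect to the grading by $\lambda_J$, and apply Theorem \ref{normal gen, fixed} to identify the subset $S(\phi, w_J)$ of indices $i$ whose degree cannot be expressed as a nonnegative integer combination of the others. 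Then I would apply Theorem \ref{root vector, fixed, 2}, verifying for each degree one radical root $\beta$ that the pairing $\langle \beta - \beta^\prime, (1 + w_J)\varpi_i\rangle$ is nonzero for some $i \in S(\phi, w_J)$ as $\beta^\prime$ ranges over the other degree one radical roots. Since $R$ is generated in degree one and every degree one generator is then fixed up to scalar, the automorphism must be diagonal.

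The main obstacle will be step two in those cases where several normal generators share a common degree or satisfy integer relations among their degrees, so that $S(\phi, w_J)$ is strictly smaller than $\operatorname{supp}(w_J)$. In such cases Theorem \ref{root vector, fixed, 2} provides fewer separating functionals, and I will have to verify by direct computation in the $F_4$ root system that the available pairings still distinguish each degree one radical root from every other. If, for some subset $J$, even this refinement fails, I would fall back on recomputing the grading with a different integral coweight of the form $\sum_{j \in J} c_j \varpi_j^\vee$ (with $c_j \in \mathbb{Z}_{>0}$) to break degeneracies, or alternatively exploit the embedding of ${\mathcal U}_q(\mathfrak{n}_J)$ into ${\mathcal U}_q^+(F_4)$ together with the diagonal structure coming from Yakimov's result to constrain the automorphism group by restriction. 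Running through the enumerated cases in this fashion should exhaust all $15$ subsets and conclude the proof.
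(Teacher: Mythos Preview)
Your setup and first step are correct and match the paper: realizing each ${\mathcal U}_q(\mathfrak{n}_J)$ as an interval subalgebra, equipping it with the grading by $\lambda_J$, and invoking Theorem~\ref{when Aut is Gr-Aut} to reduce to graded automorphisms. You also correctly note that $J = \Pi$ is handled by \cite{Y3} and that the four cases in Proposition~\ref{prop}(9) are immediate from Theorems~\ref{normal gen, fixed} and~\ref{root vector, fixed, 2}.

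The gap is in your treatment of the remaining ten subsets. Proposition~\ref{prop} is stated to be an \emph{exhaustive} list: for every $F_4$ case not appearing there, the pairings $\langle \beta - \beta', (1+w_J)\varpi_i\rangle$ with $i\in S(\phi,w_J)$ genuinely fail to separate all degree one root vectors, so your ``verify by direct computation'' will come up short. Your first fallback (changing the coweight) does not help here, because the separating functionals in Theorem~\ref{root vector, fixed, 2} are the $(1+w_J)\varpi_i$, which are independent of the grading; changing $\lambda$ only changes which root vectors sit in degree one and what the degrees $d_i$ of the $\Theta_i$ are, and there is no reason to expect this rearrangement to produce new separations. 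Your second fallback (restricting automorphisms of ${\mathcal U}_q^+(F_4)$) is not a valid argument: an automorphism of the subalgebra ${\mathcal U}_q(\mathfrak{n}_J)$ need not extend to one of ${\mathcal U}_q^+(F_4)$, so Yakimov's result gives no a priori constraint.

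What the paper actually does for these ten cases is different in two respects. First, it uses the finer $\phi$-invariants $C_d^m$ and $\gamma_{d,\ell}^m$ of Proposition~\ref{C gamma, fixed}, which encode how generators commute with the full graded pieces $\mathcal{N}(R)_d$ of the normal subalgebra and, in the $\gamma$ case, the \emph{dimension} of the $q^m$-commuting subspace; this dimension count is not available from Theorem~\ref{root vector, fixed, 2}. Second, and more importantly, in several cases (e.g.\ $J=\{\alpha_2\}$, $\{\alpha_1,\alpha_2\}$, $\{\alpha_3,\alpha_4\}$, $\{\alpha_1,\alpha_2,\alpha_3\}$, and others) even these invariants only cut the degree one space down to a two- or three-dimensional $\phi$-invariant subspace. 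The paper then breaks these subspaces by applying $\phi$ to carefully chosen relations of the form $[x_i,x_j]=0$ or $[x_i,[x_j,x_k]]=0$, drawn from the explicit $q$-commutator identities of Lemmas~\ref{x's as commutators} and~\ref{y's as commutators} together with Corollary~\ref{LS corollary}, and reading off that the off-diagonal coefficients must vanish. This algebraic step, using the multiplicative structure of $R$ beyond its interaction with $\mathcal{N}(R)$, is the missing idea in your proposal.
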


\begin{proof}

    The case when $J$ is the full set of simple roots was handled in
    \cite[Theorem 5.1]{Y3}. Thus, we suppose $J$ is a nonempty proper subset of
    the set of simple roots.  Throughout this proof we will let $\phi$ be an
    arbitrary algebra automorphism of ${\mathcal U}_q(\mathfrak{n}_J)$.  The
    algebra ${\mathcal U}_q(\mathfrak{n}_J)$ satisfies the hypotheses of
    Theorem \ref{when Aut is Gr-Aut}. Hence, $\phi$ is a graded automorphism.
    Thus, the sets $C_d^m$ and $\gamma_{d, \ell}^m$ are $\phi$-invariant.

    Our objective is to prove that $\phi$ is a diagonal automorphism. The
    commutation relations given in Lemmas \ref{x's as commutators} and \ref{y's
    as commutators} show that ${\mathcal U}_q(\mathfrak{n}_J)$ is generated by
    the degree one component ${\mathcal U}_q(\mathfrak{n}_J)_1$.  Hence it
    suffices to show that each of the degree one generators of ${\mathcal
    U}_q(\mathfrak{n}_J)$ gets sent to a scalar multiple of itself under the
    map $\phi$. We handle this on a case by case basis for each subset $J$ of
    simple roots. The strategy is the same in each case. One key step is to
    observe that the sets $C_d^m$ and $\gamma_{d, \ell}^m$ (defined in
    (\ref{definition C}) and (\ref{definition gamma})) are $\phi$-invariant. In
    several instances we will be able to characterize the $C_d^m$'s and
    $\gamma_{d, \ell}^m$'s, or intersections of them, as either the set of all
    scalar multiples or nonzero scalar multiples of a generator of
    $\mathcal{U}_q(\mathfrak{n}_J)$. In these cases, we can immediately
    conclude that $\phi$ sends that particular generator to a multiple of
    itself. In other cases, we can show that the $C_d^m$'s (or intersections of
    $C_d^m$'s) are vector subspaces of ${\mathcal U}_q(\mathfrak{n}_J)_1$
    spanned by either two or three generators of ${\mathcal
    U}_q(\mathfrak{n}_J)$. In these cases we will need to appeal to the
    defining relations of Lemmas \ref{x's as commutators} and \ref{y's as
    commutators} as well as Corollary \ref{LS corollary} in order to conclude
    that $\phi$ indeed sends every degree one generator of ${\mathcal
    U}_q(\mathfrak{n}_J)$ to itself. Recall that Corollary \ref{LS corollary}
    gives us sufficient conditions to conclude that certain $q$-commutators,
    say $[x_i, x_j]$ or $[y_i, y_j]$, equal $0$. Throughout this proof, we are
    tacitly applying this result whenever we state that a $q$-commutator equals
    $0$.

    \noindent \underline{$J = \left\{\alpha_1 \right\}$} In this case,
    ${\mathcal U}_q\left(\mathfrak{n}_J\right) \cong \mathbf{U}^\prime_{[2,
    16]}$.  The degree one generators are $y_2$, $y_3$, $y_4$, $y_5$, $y_6$,
    $y_7$, $y_8$, $y_{10}$, $y_{11}$, $y_{12}$, $y_{13}$, $y_{14}$, $y_{15}$,
    and $y_{16}$. We have $\mathbb{K}^\times y_2 = \gamma_{6,3}^6$,
    $\mathbb{K}^\times y_3 = \gamma_{6,2}^6$, $\mathbb{K}^\times y_4 =
    \gamma_{4,1}^4 \cap \gamma_{4,1}^2$, $\mathbb{K}^\times y_5 =
    \gamma_{4,1}^2 \cap \gamma_{4,2}^0$, $\mathbb{K}^\times y_6 =
    \gamma_{6,3}^2$, $\mathbb{K}^\times y_7 = \gamma_{6,2}^2 \cap
    \gamma_{6,1}^{-2}$, $\mathbb{K}^\times y_8 = \gamma_{6,3}^0 \cap
    \gamma_{6,1}^2$, $\mathbb{K}^\times y_{10} = \gamma_{6,3}^0 \cap
    \gamma_{6,1}^{-2}$, $\mathbb{K}^\times y_{11} = \gamma_{6,2}^{-2} \cap
    \gamma_{6,1}^2$, $\mathbb{K}^\times y_{12} = \gamma_{6,3}^{-2}$,
    $\mathbb{K}^\times y_{13} = \gamma_{4,1}^{-2} \cap \gamma_{4,2}^0$,
    $\mathbb{K}^\times y_{14} = \gamma_{4,1}^{-2} \cap \gamma_{4,1}^{-4}$,
    $\mathbb{K}^\times y_{15} = \gamma_{6,2}^{-6}$, $\mathbb{K}^\times y_{16} =
    \gamma_{6,3}^{-6}$.  Hence $\phi$ is a diagonal automorphism.

    \vspace{5mm}

    \noindent \underline{$J = \left\{\alpha_2 \right\}$} In this case,
    ${\mathcal U}_q\left(\mathfrak{n}_J\right) \cong \mathbf{U}_{[2, 21]}$.
    The degree one generators are $x_2$, $x_3$, $x_4$, $x_6$, $x_7$, $x_8$,
    $x_{13}$, $x_{15}$, $x_{17}$, $x_{19}$, $x_{20}$, and $x_{21}$. We have
    $\mathbb{K}x_2 = C_4^2 \cap C_6^2$, $\mathbb{K}x_3 = C_4^2 \cap C_6^{-2}$,
    $\mathbb{K}x_4 = C_4^1 \cap C_6^2$, $\mathbb{K}x_6 = C_4^1 \cap C_6^{-2}$,
    $\mathbb{K}x_{17} = C_4^{-1} \cap C_6^2$, $\mathbb{K}x_{19} = C_4^{-1} \cap
    C_6^{-2}$, $\mathbb{K}x_{20} = C_4^{-2} \cap C_6^2$, $\mathbb{K}x_{21} =
    C_4^{-2} \cap C_6^{-2}$, $\mathbb{K}x_7 \oplus \mathbb{K}x_{13} = C_4^0
    \cap C_6^2$, and $\mathbb{K}x_8 \oplus \mathbb{K}x_{15} = C_4^0 \cap
    C_6^{-2}$.

    Thus, $\phi(x_7) = ax_7 + bx_{13}$ for some $a, b\in \mathbb{K}$. By
    applying $\phi$ to the relation $[x_7, x_{17}] = 0$ and using $[x_{13},
    x_{17}] = 0$, we conclude $b = 0$. Hence, $\phi(x_7) \in \mathbb{K}x_7$.
    Similarly, by applying $\phi$ to the relation $[x_{13}, x_{17}] = 0$ we
    conclude $\phi(x_{13}) \in \mathbb{K}x_{13}$.  Analogously, we can conclude
    that $\phi(x_8) \in \mathbb{K}x_8$ and $\phi(x_{15}) \in \mathbb{K}x_{15}$
    by applying $\phi$ to the relations $[x_8, x_{19}] = 0$ and $[x_{15},
    x_{19}] = 0$.  Hence $\phi$ is a diagonal automorphism.

    \vspace{5mm}

    \noindent \underline{$J = \left\{\alpha_3 \right\}$} Here,
    ${\mathcal U}_q\left(\mathfrak{n}_J\right) \cong \mathbf{U}^\prime_{[4,
    23]}$.  The degree one generators are $y_4$, $y_5$, $y_{17}$, $y_{19}$,
    $y_{21}$, and $y_{23}$. We have $\mathbb{K}y_4 = C_6^1 \cap C_8^2$,
    $\mathbb{K}y_5 = C_6^{-1} \cap C_8^2$, $\mathbb{K}y_{17} = C_6^1 \cap
    C_8^0$, $\mathbb{K}y_{19} = C_6^1 \cap C_8^{-2}$, $\mathbb{K}y_{21} =
    C_6^{-1} \cap C_8^0$, and $\mathbb{K}y_{23} = C_6^{-1} \cap C_8^{-2}$.
    Therefore, $\phi$ is a diagonal automorphism.

    \vspace{5mm}

    \noindent \underline{$J = \left\{\alpha_4 \right\}$} In this case,
    ${\mathcal U}_q\left(\mathfrak{n}_J\right) \cong \mathbf{U}_{[9, 23]}$.
    The degree one generators are $x_9$, $x_{10}$, $x_{13}$, $x_{15}$,
    $x_{17}$, $x_{19}$, $x_{22}$, and $x_{23}$. We have $\mathbb{K}x_9 =
    C_6^3$, $\mathbb{K}^\times x_{10} = C_6^1 \cap \gamma_{8,2}^4$,
    $\mathbb{K}^\times x_{13} = C_6^1 \cap \gamma_{8,1}^4$, $\mathbb{K}^\times
    x_{15} = C_6^1 \cap \gamma_{8,0}^4$, $\mathbb{K}^\times x_{17} = C_6^{-1}
    \cap \gamma_{8,0}^{-4}$, $\mathbb{K}^\times x_{19} = C_6^{-1} \cap
    \gamma_{8,1}^{-4}$, $\mathbb{K}^\times x_{22} = C_6^{-1} \cap
    \gamma_{8,2}^{-4}$, and $\mathbb{K}x_{23} = C_6^{-3}$. Therefore, $\phi$ is
    a diagonal automorphism.

    \vspace{5mm}

    \noindent \underline{$J = \left\{\alpha_1, \alpha_2 \right\}$} Here,
    ${\mathcal U}_q\left(\mathfrak{n}_J\right) \cong \mathbf{U}_{[1, 21]}$.
    The degree one generators are $x_1$, $x_3$, $x_6$, $x_8$, $x_{15}$,
    $x_{19}$, and $x_{21}$. We have $\mathbb{K}x_3 = C_6^2$, $\mathbb{K}x_6 =
    C_6^1$, $\mathbb{K}x_{19} = C_6^{-1}$, $\mathbb{K}x_{21} = C_6^{-2}$, and
    $\mathbb{K}x_1 \oplus \mathbb{K}x_8 \oplus \mathbb{K}x_{15} = C_6^0$.
    Hence $\phi(x_3) \in \mathbb{K}x_3$, $\phi(x_6) \in \mathbb{K}x_6$,
    $\phi(x_{19}) \in \mathbb{K}x_{19}$, $\phi(x_{21}) \in \mathbb{K}x_{21}$,
    and there exist scalars $a_{ij} \in \mathbb{K}$, ($1 \leq i, j \leq 3$)
    such that
    \[
        \begin{split}
            \phi(x_1) &= a_{11}x_1 + a_{12}x_8 + a_{13}x_{15}\\
            \phi(x_8) &= a_{21}x_1 + a_{22}x_8 + a_{23}x_{15}\\
            \phi(x_{15}) &= a_{31}x_1 + a_{32}x_8 + a_{33}x_{15}
        \end{split}
    \]
    By applying $\phi$ to the relation $[x_6, x_{15}] = 0$ and using the
    relations $[x_6, x_8] = 0$ and $[x_1, x_6] = x_4$ to straighten unordered
    monomials, we obtain
    \[
        a_{31}\left( \left(q^2 - q\right) x_1 x_6 -q^2 x_4 \right) + a_{32}
        \left( 1 - q^{-1} \right) x_6 x_8 = 0.
    \]
    Hence $a_{31} = a_{32} = 0$. Therefore $\phi(x_{15}) \in \mathbb{K}x_{15}$.

    Next we apply $\phi$ to the relation $[x_6, x_8] = 0$ and use the relations
    $[x_1, x_6] = x_4$ and $[x_6, x_{15}] = 0$ to straighten unordered
    monomials to conclude that $-q^2a_{21} x_4 + a_{23} \left( 1 - q \right)
    x_6 x_{15} = 0$.  Hence $a_{21} = a_{23} = 0$. Thus, $\phi(x_8) \in
    \mathbb{K}x_8$.

    The relations $[x_1, x_{19}] = x_{17}$ and $[x_8 , x_{17}]
    = 0$ imply $[x_8, [x_1, x_{19}]] = 0$. By applying $\phi$ to this relation
    and using the relations $[x_8 , x_{15}] = [x_8, x_{19}] = [x_{15}, x_{19}]
    = 0$ to straighten any unordered monomials, we get
    \[
        a_{12}\left(1 - q^{-4} \right) x_8^2 x_{19} + a_{13} \left( 1 - q^{-3}
        \right) x_8 x_{15} x_{19} = 0.
    \]
    Hence $a_{12} = a_{13} = 0$. Thus, $\phi(x_1) \in \mathbb{K} x_1$ and we
    conclude that $\phi$ is a diagonal automorphism.

    \vspace{5mm}

    \noindent \underline{$J = \left\{\alpha_1, \alpha_3 \right\}$} In this
    situation, ${\mathcal U}_q\left(\mathfrak{n}_J\right) \cong
    \mathbf{U}^\prime_{[2, 23]}$.  The degree one generators are $y_2$, $y_3$,
    $y_{17}$, $y_{19}$, $y_{21}$, and $y_{23}$. We have $\mathbb{K}y_2 = C_8^0
    \cap C_{22}^2$, $\mathbb{K}y_3 = C_8^0 \cap C_{22}^{-2}$, $\mathbb{K}y_{17}
    = C_8^1 \cap C_{22}^2$, $\mathbb{K}y_{19} = C_8^1 \cap C_{22}^{-2}$,
    $\mathbb{K}y_{21} = C_8^{-1} \cap C_{22}^2$, and $\mathbb{K}y_{23} =
    C_8^{-1} \cap C_{22}^{-2}$. Hence $\phi$ is a diagonal automorphism.

    \vspace{5mm}

    \noindent \underline{$J = \left\{\alpha_1, \alpha_4 \right\}$} In this
    case, ${\mathcal U}_q\left(\mathfrak{n}_J\right) \cong
    \mathbf{U}^\prime_{[1, 20]}$.  The degree one generators are $y_1$, $y_2$,
    $y_3$, $y_5$, $y_{11}$, $y_{12}$, $y_{17}$, $y_{19}$, and $y_{20}$. We have
    $\mathbb{K}^\times y_1 = C_{10}^2 \cap \gamma_{14, 1}^2$,
    $\mathbb{K}^\times y_2 = C_{10}^2 \cap \gamma_{14, 1}^4$, $\mathbb{K} y_3 =
    C_{10}^2 \cap C_{14}^0$, $\mathbb{K} y_5 = C_{10}^0 \cap C_{14}^0$,
    $\mathbb{K} y_{11} = C_{10}^{-2} \cap C_{14}^0$, $\mathbb{K}^\times y_{12}
    = C_{10}^{-2} \cap \gamma_{14, 1}^{-4}$, $\mathbb{K}^\times y_{17} =
    C_{10}^0 \cap \gamma_{14, 1}^2$, $\mathbb{K}^\times y_{19} = C_{10}^0 \cap
    \gamma_{14, 1}^{-2}$, and $\mathbb{K}^\times y_{20} = C_{10}^{-2} \cap
    \gamma_{14, 1}^{-2}$. Hence $\phi$ is a diagonal automorphism.

    \vspace{5mm}

    \noindent \underline{$J = \left\{\alpha_2, \alpha_3 \right\}$} Here,
    ${\mathcal U}_q\left(\mathfrak{n}_J\right) \cong \mathbf{U}^\prime_{[3,
    24]}$.  The degree one generators are $y_3$, $y_{17}$, $y_{21}$, and
    $y_{24}$. We have $\mathbb{K}y_3 = C_{14}^2$, $\mathbb{K}y_{17} =
    C_{10}^1$, $\mathbb{K}y_{21} = C_{10}^{-1}$, and $\mathbb{K}y_{24} =
    C_{14}^{-2}$.  Hence, $\phi$ is a diagonal automorphism.

    \vspace{5mm}

    \noindent \underline{$J = \left\{\alpha_2, \alpha_4 \right\}$} In this
    case, ${\mathcal U}_q\left(\mathfrak{n}_J\right) \cong \mathbf{U}_{[2,
    23]}$.  The degree one generators are $x_2$, $x_3$, $x_4$, $x_6$, $x_7$,
    $x_8$, $x_{22}$, and $x_{23}$. We have $\mathbb{K}x_2 = C_{10}^2 \cap
    C_{14}^2$, $\mathbb{K}x_3 = C_{10}^{-2} \cap C_{14}^2$, $\mathbb{K}x_4 =
    C_{10}^2 \cap C_{14}^0$, $\mathbb{K}x_6 = C_{10}^{-2} \cap C_{14}^0$,
    $\mathbb{K}x_7 = C_{10}^2 \cap C_{14}^{-2}$, $\mathbb{K}x_8 = C_{10}^{-2}
    \cap C_{14}^{-2}$, $\mathbb{K}x_{22} = C_{14}^1$, and $\mathbb{K}x_{23} =
    C_{14}^{-1}$.  Hence, $\phi$ is a diagonal automorphism.

    \vspace{5mm}

    \noindent \underline{$J = \left\{\alpha_3, \alpha_4 \right\}$} We have
    ${\mathcal U}_q\left(\mathfrak{n}_J\right) \cong \mathbf{U}_{[4, 24]}$.
    The degree one generators are $x_4$, $x_6$, $x_{22}$, and $x_{24}$.  We
    have $\mathbb{K}x_4 = C_{12}^2$, $\mathbb{K}x_{24} = C_{12}^{-2}$, and
    $\mathbb{K}x_{6} \oplus \mathbb{K}x_{22} = C_{12}^0$. Thus, there exist
    scalars $a_{11}, a_{12}, a_{21}, a_{22}, \gamma, \delta \in \mathbb{K}$
    such that $\phi (x_6) = a_{11}x_6 + a_{12} x_{22}$, $\phi(x_{22}) = a_{21}
    x_6 + a_{22} x_{22}$, $\phi(x_4) = \gamma x_4$, and $\phi(x_{24}) = \delta
    x_{24}$.

    Observe first that the relation $[x_4, x_{24}] = (q + q^{-1})x_7$ implies
    $\phi(x_7) = \gamma \delta x_7$. Next apply $\phi$ to the relation $[x_6,
    x_7] = 0$ and use the relation $[x_7, x_{22}] = x_{17}$ to straighten
    unordered monomials to obtain
    \[
        a_{12} \left( \left(q^2 - 1 \right) x_7 x_{22} - q^2 x_{17} \right) =
        0.
    \]
    Hence $a_{12} = 0$. Thus $\phi(x_6) \in \mathbb{K}x_6$.

    The relations $[x_4, x_{22}] = x_{13}$ and $[x_4, x_{13}] = 0$ imply $[x_4,
    [x_4, x_{22}]] = 0$. Applying $\phi$ to this relation and using the
    relations $[x_4, x_6] = \left(q + q^{-1}\right) x_5$ and $[x_4, x_5] = 0$
    to straighten unordered monomials we get
    \[
        a_{21} \left( 1 - q^{-1} \right) \left( \left(1 - q\right) x_4^2x_6 +
        \left(q + q^{-1}\right)^2 x_4 x_5 \right) = 0.
    \]
    Hence $a_{21} = 0$. Therefore $\phi(x_{22}) \in \mathbb{K} x_{22}$. Thus
    $\phi$ is a diagonal automorphism.

    \vspace{5mm}

    \noindent \underline{$J = \left\{\alpha_1, \alpha_2, \alpha_3 \right\}$} In
    this case, ${\mathcal U}_q\left(\mathfrak{n}_J\right) \cong
    \mathbf{U}^\prime_{[2, 24]}$.  The degree one generators are $y_2$,
    $y_{17}$, $y_{21}$, and $y_{24}$. We have $\mathbb{K}y_{17} = C_{12}^1$,
    $\mathbb{K}y_{21} = C_{12}^{-1}$, and $\mathbb{K}y_2 \oplus
    \mathbb{K}y_{24} = C_{12}^0$. Thus, there exist $b_2, b_{24}, c_2, c_{24}
    \in \mathbb{K}$ such that $\phi(y_2) = b_2y_2 + b_{24}y_{24}$ and
    $\phi(y_{24}) = c_2 y_2 + c_{24}y_{24}$. Applying $\phi$ to the relation
    $[y_2, y_{17}] = 0$ and using the identity $[y_{17}, y_{24}] = y_{19}$
    gives us
    \[
        0 = (b_2y_2 + b_{24}y_{24})y_{17} - y_{17}(b_2y_2 + b_{24}y_{24}) =
        b_{24}\left(q\widehat{q}y_{17}y_{24} - q^2y_{19}\right).
    \]
    Hence $b_{24} = 0$. Next we observe that $[y_2, y_{24}] = y_3$ and $[y_2,
    y_3] = 0$.  Hence we have the relation $[y_2, [y_2, y_{24}]] = 0$.
    Applying $\phi$ to this relation gives us $c_2\widehat{q}^2y_2^3 = 0$.
    Therefore $c_2 = 0$ and we conclude that $\phi$ is a diagonal automorphism.

    \vspace{5mm}

    \noindent \underline{$J = \left\{\alpha_1, \alpha_2, \alpha_4 \right\}$} In
    this case, ${\mathcal U}_q\left(\mathfrak{n}_J\right) \cong \mathbf{U}_{[1,
    23]}$.  The degree one generators are $x_1$, $x_3$, $x_6$, $x_8$, $x_{22}$,
    and $x_{23}$. We have $\mathbb{K}x_3, = C_{18}^2$, $\mathbb{K}x_8 =
    C_{18}^{-2}$, $\mathbb{K}x_{22} = C_{18}^1$, $\mathbb{K}x_{23} =
    C_{18}^{-1}$, and $\mathbb{K}x_1 \oplus \mathbb{K}x_6 = C_{18}^0$. Hence,
    there exist $b_1, b_6, c_1, c_6 \in \mathbb{K}$ such that $\phi(x_1) =
    b_1x_1 + b_6x_6$ and $\phi(x_6) = c_1x_1 + c_6x_6$. Applying $\phi$ to the
    relation $[x_1, x_{22}] = 0$ gives us
    \[
        0 = (b_1x_1 + b_6x_6)x_{22} - x_{22}(b_1x_1 + b_6x_6) = b_6\left( (1 -
        q) x_6x_{22} + qx_{15}\right).
    \]
    because $[x_6, x_{22}] = x_{15}$.  Hence $b_6 = 0$.  Similarly, by applying
    $\phi$ to the relation $[x_3, x_6] = 0$ we obtain
    \[
        0 = x_3(c_1x_1 + c_6x_6) - q^2(c_1x_1 + c_6x_6)x_3 = -c_1q^2x_2
    \]
    because $[x_1, x_3] = x_2$. Hence $c_1 = 0$. Therefore $\phi$ is a diagonal
    automorphism.

    \vspace{5mm}

    \noindent \underline{$J = \left\{\alpha_1, \alpha_3, \alpha_4 \right\}$} In
    this situation, ${\mathcal U}_q\left(\mathfrak{n}_J\right) \cong
    \mathbf{U}^\prime_{[1, 23]}$.  The degree one generators are $y_1$, $y_2$,
    $y_3$, $y_{21}$, and $y_{23}$.  We have $\mathbb{K}y_1 = C_{30}^0$,
    $\mathbb{K}y_2 \oplus \mathbb{K}y_{21} = C_{30}^2$, and $\mathbb{K}y_3
    \oplus \mathbb{K}y_{23} = C_{30}^{-2}$. Hence there exist $a_2, a_{21},
    b_2, b_{21}, c_3, c_{23}, d_3, d_{23} \in \mathbb{K}$ such that $\phi(y_2)
    = a_2y_2 + a_{21}y_{21}$, $\phi(y_{21}) = b_2y_2 + b_{21}y_{21}$,
    $\phi(y_3) = c_3y_3 + c_{23}y_{23}$, and $\phi(y_{23}) = d_3y_3 +
    d_{23}y_{23}$. The relations $[y_1, y_{17}] = 0$ and $[y_1, y_{21}] =
    y_{17}$ give us $[y_1, [y_1, y_{21}]] = 0$. Applying $\phi$ to this
    relation and using the commutation relation $y_1y_2 = y_2y_1$ gives us
    $b_2(2 - q - q^{-1})y_1^2y_2 = 0$. Hence $b_2 = 0$. Next, since $[y_1,
    y_{21}] = y_{17}$ and $[y_2, y_{17}] = 0$, we have the relation $[y_2,
    [y_1, y_{21}]] = 0$.  Applying $\phi$ to this relation and using the
    identity $[y_{17}, y_{21}] = 0$ gives us $a_{21}\left(q^{-1} - 1\right)
    y_{17}y_{21} = 0$. Therefore $a_{21} = 0$. Observe next that $[y_{21},
    y_{23}] = [2]_qy_{22}$ and $[y_{21}, y_{22}] = 0$.  Therefore $[y_{21},
    [y_{21}, y_{23}]] = 0$. Applying $\phi$ to this relation and using the
    identities $[y_3, y_{21}] = y_5$ and $[y_5, y_{21}] = [2]_qy_{11}$ give us
    $d_3[2]_qq^2y_{11} = 0$.  Thus $d_3 = 0$.  Finally, applying $\phi$ to the
    relation $[y_1, y_3] = 0$ and using the identity $[y_1, y_{23}] = y_{19}$
    gives us $c_{23}\left( (1 - q)y_1y_{23} + qy_{19} \right) = 0$. Hence
    $c_{23} = 0$.  Therefore $\phi$ is a diagonal automorphism.

    \vspace{5mm}

    \noindent \underline{$J = \left\{ \alpha_2, \alpha_3, \alpha_4 \right\}$}
    In this case, ${\mathcal U}_q\left(\mathfrak{n}_J\right) \cong
    \mathbf{U}_{[2, 24]}$.  The degree one generators are $x_2$, $x_3$,
    $x_{22}$, and $x_{24}$. We have $\mathbb{K}x_2 = C_{18}^2$, $\mathbb{K}x_3
    = C_{18}^{-2}$, and $\mathbb{K}x_{22} \oplus \mathbb{K}x_{24} = C_{18}^0$.
    By applying $\phi$ to the relation $[x_2, x_{22}] = 0$ and using the
    relation $[x_2, x_{24}] = x_4$ to straighten unordered monomials, we can
    conclude that $\phi(x_{22}) \in \mathbb{K}x_{22}$.

    Since $\mathbb{K}x_{22} \oplus \mathbb{K}x_{24}$ is a $\phi$-invariant
    subspace, there exist scalars $\gamma, \delta \in \mathbb{K}$ such that
    $\phi(x_{24}) = \gamma x_{22} + \delta x_{24}$. The relations $[x_2,
    x_{24}] = x_4$ and $[x_3, x_4] = 0$ give us the relation $[x_3, [x_2,
    x_{24}]] = 0$. Applying $\phi$ to this relation and using the relations
    $[x_2, x_{22}] = [x_2, x_3] = [x_3, x_{22}] = 0$ to straighten any
    unordered monomials gives us $\gamma \left(q^{-2} - 1\right) x_2 x_3 x_{22}
    = 0$. Hence $\gamma = 0$. Therefore $\phi(x_{24}) \in \mathbb{K}x_{24}$ and
    $\phi$ is a diagonal automorphism.

\end{proof}

We are now able to prove the main result of this section. The following theorem
proves Conjecture \ref{conj} when the underlying Lie algebra
$\mathfrak{g}$ is of type $F_4$.

\begin{theorem}

    \label{Theorem, F4}

    If $\mathfrak{g}$ is the Lie algebra of type $F_4$ and $J$ is a nonempty
    subset of simple roots of $\mathfrak{g}$, then
    $\operatorname{Aut}({\mathcal U}_q(\mathfrak{n}_J)) \cong
    \left(\mathbb{K}^\times\right)^4$.

\end{theorem}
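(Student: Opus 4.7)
The plan is to deduce this theorem as an essentially immediate corollary of Theorem \ref{F4: diagonal}, in exactly the same spirit as the endgame of the $G_2$ and $B_6$ arguments (Theorems \ref{Theorem, G2} and \ref{Theorem, B6}). The substantive work has already been done in Theorem \ref{F4: diagonal}: for every nonempty $J \subseteq \{\alpha_1, \alpha_2, \alpha_3, \alpha_4\}$, every algebra automorphism $\phi$ of $\mathcal{U}_q(\mathfrak{n}_J)$ sends each Lusztig root vector $X_{\beta_i}$ to a nonzero scalar multiple of itself. Consequently $\operatorname{Aut}(\mathcal{U}_q(\mathfrak{n}_J)) = \operatorname{Diag-Aut}(\mathcal{U}_q(\mathfrak{n}_J))$, so only the group of diagonal automorphisms remains to be identified.

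Next, I would note that each $\mathcal{U}_q(\mathfrak{n}_J)$ is a symmetric CGL extension whose CGL rank equals $|\operatorname{supp}(w_J)|$, by the general identification recorded in Section~4. For every nonempty $J$, a quick inspection of the reduced expressions $\mathbf{R}[w_o]$ and $\mathbf{R}^\prime[w_o]$ in (\ref{two reduced expressions}) (and of the substrings that realize the various $w_J$) shows that every simple reflection $s_i$ with $i \in \{1,2,3,4\}$ appears in the reduced word for $w_J$, so $\operatorname{supp}(w_J) = \{1, 2, 3, 4\}$. Hence the CGL rank of $\mathcal{U}_q(\mathfrak{n}_J)$ equals $4 = \operatorname{rank}(\mathfrak{g})$ uniformly across all nonempty subsets $J$.

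Finally, I would invoke \cite[Theorems 5.3 and 5.5]{GY2}: the group of diagonal automorphisms of a symmetric CGL extension of rank $r$ is isomorphic to the algebraic torus $(\mathbb{K}^\times)^r$. Combining the three observations above yields $\operatorname{Aut}(\mathcal{U}_q(\mathfrak{n}_J)) \cong (\mathbb{K}^\times)^4$, as claimed. I do not anticipate any real obstacle here; since the hard case analysis was absorbed into Theorem \ref{F4: diagonal}, the present proof is purely bookkeeping. The only small item to verify carefully is the uniform computation $\operatorname{supp}(w_J) = \{1,2,3,4\}$ as $J$ ranges over the nonempty subsets of simple roots, and this reduces to reading off the substrings identified in Theorem \ref{F4: diagonal}.
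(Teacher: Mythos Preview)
Your proposal is correct and matches the paper's own proof essentially verbatim: the paper also simply invokes Theorem \ref{F4: diagonal} to conclude every automorphism is diagonal, notes that the CGL rank of $\mathcal{U}_q(\mathfrak{n}_J)$ is $4$, and then applies \cite[Theorems 5.3 and 5.5]{GY2}. Your additional remark justifying $\operatorname{supp}(w_J)=\{1,2,3,4\}$ is a helpful elaboration of a step the paper leaves implicit.
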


\begin{proof}

    By Theorem \ref{F4: diagonal}, every automorphism of ${\mathcal
    U}_q(\mathfrak{n}_J)$ is a diagonal automorphism. As a CGL extension, the
    algebra ${\mathcal U}_q(\mathfrak{n}_J)$ has rank $4$. Thus, by
    \cite[Theorems 5.3 and 5.5]{GY2}, $\operatorname{Aut}({\mathcal
    U}_q(\mathfrak{n}_J)) \cong \left(\mathbb{K}^\times\right)^4$.

\end{proof}

\section{Two lemmas regarding ${\mathcal U}_q(\mathfrak{n}_J)$ when
$\mathfrak{g} = F_4$}

In this section we prove two lemmas regarding the quantized nilradicals
${\mathcal U}_q(\mathfrak{n}_J)$ for the case when the underlying Lie algebra
$\mathfrak{g}$ is of type $F_4$ and $J$ is any nonempty subset of simple roots.

Recall from Section \ref{Z grading section} that for each coweight $\lambda \in
P^\vee$, there is an induced $\mathbb{Z}$-grading on ${\mathcal U}_q\left(
\mathfrak{n}_J \right)$. We will use the coweight $\lambda = \sum_{i\in J}
\varpi_i^\vee \in P^\vee$.  The two lemmas in this section explicitly show how
every Lusztig root vector $X_\beta$ in ${\mathcal U}_q(\mathfrak{n}_J)$ with
$\operatorname{height}(\beta) > 1$ can be written, up to a scalar multiple, as
a $q$-commutator of other Lusztig root vectors. As a direct consequence of
these lemmas, one can readily verify that each quantized nilradical ${\mathcal
U}_q\left(\mathfrak{n}_J\right)$ is generated, as an algebra, by the Lusztig
root vectors of degree $1$. Hence, ${\mathcal U}_q(\mathfrak{n}_J)$ is a
locally finite, connected, $\mathbb{N}$-graded algebra generated by its graded
component of degree $1$.

\begin{lemma}
    \label{x's as commutators}

    Let $\mathfrak{g}$ be the Lie algebra of type $F_4$, and let $x_1,\dots,
    x_{24}$ be the Lusztig root vectors  (recall (\ref{xi, yi definition}))
    corresponding to the reduced expression $\mathbf{R}[w_o]$ (see (\ref{two
    reduced expressions})) of the longest element of the Weyl group of
    $\mathfrak{g}$. Then

    \begin{enumerate}

        \begin{multicols}{3}

            \item[] $x_2 = [x_1, x_3]$,
            \item[] $x_4 = [x_1, x_6]$,
            \item[] $x_4 = [x_2, x_{24}]$,
            \item[] $x_5 = \frac{1}{[2]_q} [x_4, x_6]$,
            \item[] $x_6 = [x_3, x_{24}]$,
            \item[] $x_7 = [x_1, x_8]$,
            \item[] $x_7 = \frac{1}{[2]_q}[x_4, x_{24}]$,
            \item[] $x_8 = \frac{1}{[2]_q}[x_6, x_{24}]$,
            \item[] $x_9 = [x_6, x_{13}]$,
            \item[] $x_{10} = [x_8, x_{13}]$,
            \item[] $x_{11} = \frac{1}{[2]_q}[x_{10}, x_{13}]$,
            \item[] $x_{12} = \frac{1}{[2]_q}[x_{10}, x_{15}]$,
            \item[] $x_{13} = [x_1, x_{15}]$,
            \item[] $x_{13} = [x_4, x_{22}]$,
            \item[] $x_{14} = \frac{1}{[2]_q}[x_{13}, x_{15}]$,
            \item[] $x_{15} = [x_6, x_{22}]$,
            \item[] $x_{16} = [x_{15}, x_{17}]$,
            \item[] $x_{17} = [x_1, x_{19}]$,
            \item[] $x_{17} = [x_7, x_{22}]$,
            \item[] $x_{18} = \frac{1}{[2]_q}[x_{17}, x_{19}]$,
            \item[] $x_{19} = [x_8, x_{22}]$,
            \item[] $x_{20} = [x_1, x_{21}]$,
            \item[] $x_{20} = \frac{1}{[2]_q}[x_{17}, x_{22}]$,
            \item[] $x_{21} = \frac{1}{[2]_q}[x_{19}, x_{22}]$,
            \item[] $x_{23} = [x_{22}, x_{24}]$.

        \end{multicols}

    \end{enumerate}

\end{lemma}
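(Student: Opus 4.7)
The plan is to verify the twenty-five identities one by one, using a uniform three-step procedure based on the Levendorskii-Soibelman straightening rule and weight-space analysis. First, I would write down the radical roots $\beta_1,\dots,\beta_{24}$ explicitly by applying $\beta_k = s_{i_1}s_{i_2}\cdots s_{i_{k-1}}(\alpha_{i_k})$ to the reduced expression $\mathbf{R}[w_o]$, recording each as an element of the $F_4$ root lattice. For each claimed identity $X_{\beta_i}=c[X_{\beta_j},X_{\beta_k}]$ (with $j<k$ in every case), I would first check the root equation $\beta_i=\beta_j+\beta_k$; this is a mechanical root-system computation in $F_4$ and justifies the claim that the $q$-commutator lives in the weight space where $X_{\beta_i}$ sits.

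Next, by the Levendorskii-Soibelman rule, $[X_{\beta_j},X_{\beta_k}]$ lies in $\mathbf{U}_{[j+1,k-1]}\cap \mathcal{U}_q(\mathfrak{g})_{\beta_j+\beta_k}$, and the PBW basis expresses it as a $\mathbb{K}$-linear combination of ordered monomials $X_{\beta_{j+1}}^{m_{j+1}}\cdots X_{\beta_{k-1}}^{m_{k-1}}$ whose exponents satisfy $\sum_{\ell} m_\ell\beta_\ell=\beta_i$. I would enumerate all nonnegative integer solutions and argue that in every case the unique one is $m_i=1$ with all other $m_\ell=0$. Together with Corollary \ref{LS corollary}, this forces $[X_{\beta_j},X_{\beta_k}]\in\mathbb{K}\,X_{\beta_i}$. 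The scalar $c$ is then pinned down by a single explicit computation: for the simplest cases where the subword $s_{i_{j+1}}\cdots s_{i_{k-1}}$ lies in a rank-two root subsystem, the coefficient follows from a short Serre-type identity and the Lusztig formulas for $T_{s_i}(E_j)$; the factors $\tfrac{1}{[2]_q}$ appear precisely through the divided-power coefficient $\tfrac{1}{[n]_{q_i}!}$.

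For the long-range identities (e.g.\ $x_4=[x_2,x_{24}]$, $x_4=[x_1,x_6]$, $x_{13}=[x_4,x_{22}]$), I would reduce to a shorter subword by applying an appropriate braid automorphism $T_{s_{i_1}}\cdots T_{s_{i_{j-1}}}^{-1}$, which maps $X_{\beta_j}$ to $E_{i_j}$ and $X_{\beta_k}$ to another Lusztig root vector associated to the suffix of the reduced word; the commutator identity is then equivalent to one in the shorter expression, where both the uniqueness of the PBW monomial and the scalar are easier to track. Alternately, the identity $x_4=[x_2,x_{24}]$ (for example) follows from $x_4=[x_1,x_6]$ by combining the shorter identity $x_6=[x_3,x_{24}]$ with the Jacobi-like identity implied by the $q$-Serre relation among $E_1,E_3$ (which commute), so many of the longer-range identities are bookkeeping consequences of shorter ones.

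The main obstacle I anticipate is Step 2 — ensuring uniqueness of the target monomial in the relevant weight space — because in $F_4$ a positive root can decompose in several ways as a sum of positive roots, and competing monomials in $\mathbf{U}_{[j+1,k-1]}$ of the same $Q$-weight could in principle appear. To rule these out I would use the finer $\mathbb{Z}$-grading induced by the coweight $\lambda=\varpi_1^\vee+\varpi_2^\vee+\varpi_3^\vee+\varpi_4^\vee$ as an additional constraint, together with the explicit list of $\beta_\ell$'s lying in the interval $(\beta_j,\beta_k)$; the length restriction and the explicit coordinates of the radical roots in $F_4$ should in every individual case cut the solution set down to the single desired monomial.
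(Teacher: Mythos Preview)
Your approach differs substantially from the paper's, and while the overall strategy is not unreasonable, it has two concrete weaknesses.

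First, the ``additional constraint'' you propose to resolve the uniqueness obstacle is vacuous: the $\mathbb{Z}$-grading induced by the coweight $\lambda = \varpi_1^\vee + \varpi_2^\vee + \varpi_3^\vee + \varpi_4^\vee$ is exactly the height grading, and any two PBW monomials with the same $Q$-degree automatically have the same height. So this gives no new information beyond the $Q$-grading you are already using. You would still need to verify uniqueness of the PBW monomial by direct enumeration in each of the twenty-five cases, and for the long-range identities (e.g.\ $x_4 = [x_2, x_{24}]$ with interval $[3,23]$) this enumeration is tedious, though it does in fact succeed.

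Second, and more seriously, even once uniqueness is established you only know $[X_{\beta_j}, X_{\beta_k}] \in \mathbb{K}\,X_{\beta_i}$; you still need to show the scalar is exactly $1$ or $[2]_q$ (and in particular nonzero). Your treatment of this step is where the actual content lies, and it is deferred to ``a single explicit computation'' and to braid-automorphism reductions that you sketch only vaguely. The paper's proof, by contrast, \emph{is} that explicit computation: it identifies $x_1 = E_1$, $x_3 = E_2$, $x_{22} = E_4$, $x_{24} = E_3$ via Proposition~\ref{proposition 1}, then computes each remaining $x_i$ directly as a nested $q$-commutator $E_{i_1 i_2 \cdots i_n}$ by manipulating the Lusztig symmetries through braid relations (e.g.\ $x_{15} = \mathbf{T}_{23}(E_4) = E_{234} = [x_6, x_{22}]$), and finishes several identities by $q$-associativity. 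This constructive route simultaneously establishes the identity and the scalar with no separate uniqueness argument. Your ``fallback'' via braid automorphisms is in fact the paper's primary method; what you present as the main line (weight-space uniqueness plus undetermined scalar) is a detour that still lands you at the same computation.
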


\begin{proof}

    Throughout the proof of this lemma, we adopt the abbreviation
    \[
        E_{ij} := [E_i, E_j],  \hspace{10mm} i,j \in \mathbf{I},
    \]
    for a $q$-commutator and inductively define the nested $q$-commutator
    \[
        E_{i_1 i_2 \cdots i_n} := [E_{i_1 i_2 \cdots i_{n - 1}}, E_{i_n}]
    \]
    for $i_1, i_2, \dots, i_n \in \mathbf{I}$.

    Since $s_1$ is the first simple reflection appearing the reduced expression
    $R[w_o]$, it is clear that $x_1 = E_1$. As a direct consequence of
    Proposition \ref{proposition 1}, $x_3 = E_2$, $x_{22} = E_4$, and $x_{24} =
    E_3$. At this point we have identified each Chevalley generator $E_i$ with
    a corresponding Lusztig root vector. Next we will show how the remaining
    Lusztig root vectors $x_i$ can be written, up to a scalar multiple, as
    $q$-commutators of other Lusztig root vectors. The next step will be to
    focus on those $x_i$ with $\langle \operatorname{deg}_Q(x_i), \varpi_1^\vee
    + \varpi_2^\vee + \varpi_3^\vee + \varpi_4^\vee \rangle = 2$. In other
    words, we focus on the $x_i$'s such that $\operatorname{deg}_Q(x_i)$ has
    height $2$. For short, we will say that the $\mathbb{N}$-degree of $x_i$ is
    $d$ whenever $\operatorname{deg}_Q(x_i)$ has height $d$. We compute
    \begin{align*}
        x_2 &= \mathbf{T}_1(E_2) = E_{12} = [x_1, x_3],
        \\
        x_6 &= \mathbf{T}_{\underline{121}32}(E_3) = \mathbf{T}_{2 \cdot
        1232}(E_3) = \mathbf{T}_2(E_3) = E_{23} = [x_3, x_{24}],
        \\
        x_{23} &=
        \mathbf{T}_{\underline{1213231243213234323123}}(E_4)
        = \mathbf{T}_{4 \cdot 121321324321323432132}(E_4) =
        \mathbf{T}_4(E_3)
        \\
        &= E_{43} = [x_{22}, x_{24}],
    \end{align*}
    where, in the above computations, we adopt the underlining notation, as in
    $\mathbf{T}_{\underline{121}32}$ above, to highlight that braid relations
    in the Weyl group are being applied to the underlined part in moving from
    one step in the calculations to the next. We use the dot notation, as in
    $\mathbf{T}_{2 \cdot 1232}$ above, to split a reduced word into two parts
    in order to indicate which Lusztig symmetries are being applied at that
    particular step. We continue to compute
    \begin{align*}
        x_4 &= \mathbf{T}_{12\cdot 1}(E_3) = \mathbf{T}_{1\cdot 2}(E_3) =
        T_1(E_{23}) = E_{123} = [E_{12}, E_3] = [x_2, x_{24}],
        \\
        [2]_q x_8 &= [2]_q \mathbf{T}_{\underline{121323}1}(E_2)
        = [2]_q \mathbf{T}_{23\cdot 12321}(E_2)
        = [2]_q \mathbf{T}_{23}(E_2)
        = E_{233} = [E_{23}, E_3]
        \\
        &= [x_6, x_{24}],
        \\
        x_{15} &= \mathbf{T}_{\underline{121323124321}32}(E_3)
        = \mathbf{T}_{23 \cdot 123412321232}(E_3)
        = \mathbf{T}_{23}(E_4)
        = E_{234}
        \\
        &= [E_{23}, E_4] = [x_6, x_{22}].
    \end{align*}
    So far we have identified how each Lusztig root vector $x_i$ with
    $\mathbb{N}$-degree at most $3$ can be written as a $q$-commutator of
    Lusztig root vectors of smaller $\mathbb{N}$-degree. We continue in this
    manner focusing next on the Lusztig root vectors $x_i$ having
    $\mathbb{N}$-degree equal to $4$. We compute
    \begin{align*}
        [2]_q x_7 &= [2]_q \mathbf{T}_{12\underline{13}23}(E_1) =
        [2]_q \mathbf{T}_{123\cdot 123}(E_1)
        = [2]_q \mathbf{T}_{1\cdot 23}(E_2)
        = \mathbf{T}_1(E_{233})
        \\
        &= E_{1233} = [E_{123}, E_3] = [x_4, x_{24}],
        \\
        x_{13} &= \mathbf{T}_{12\underline{13}23124321}(E_3) =
        \mathbf{T}_{123 \cdot 123124321}(E_3)
        = \mathbf{T}_{123}(E_4)
        = E_{1234} = [E_{123}, E_4]
        \\
        &= [x_4, x_{22}],
        \\
        [2]_q x_{19} &= [2]_q
        \mathbf{T}_{\underline{121323124321323}432}(E_3) = [2]_q
        \mathbf{T}_{2342 \cdot 12321432132432}(E_3)
        \\
        &= [2]_q \mathbf{T}_{234\cdot 2}(E_3) = [2]_q
        \mathbf{T}_{234}(E_{23}) = [2]_q [\mathbf{T}_{23\cdot 4}(E_2),
        \mathbf{T}_{234}(E_3)]
        \\
        &= [2]_q [\mathbf{T}_{23}(E_2), E_4] = [E_{233}, x_{22}] = [2]_q [x_8,
        x_{22}],
        \\
        x_{19} &= \mathbf{T}_{\underline{121323124321323}432}(E_3)
        = \mathbf{T}_{2324 \cdot 12321432132432}(E_3)
        \\
        &= \mathbf{T}_{232\cdot 4}(E_3)
        = \mathbf{T}_{232}(E_{43})
        = [\mathbf{T}_{232}(E_4), \mathbf{T}_{232}(E_3)]
        = [E_{234}, E_3] = [x_{15}, x_{24}].
    \end{align*}
    Next we show how each Lusztig root vector $x_i$ of $\mathbb{N}$-degree
    equal to $5$ (i.e. $x_5$, $x_{17}$, and $x_{21}$) can be written as a
    $q$-commutator of Lusztig root vectors of smaller $\mathbb{N}$-degree.  We
    have
    \begin{align*}
        [2]_q x_5 &=
        [2]_q\mathbf{T}_{121\cdot 3}(E_2)
        =\mathbf{T}_{121}([E_3, E_{32}])
        =\mathbf{T}_{121}([E_3, \mathbf{T}_{32}(E_3)])
        \\
        &= [\mathbf{T}_{121}(E_3), \mathbf{T}_{12132}(E_3)]
        = [x_4, x_6],
        \\
        x_{17} &= \mathbf{T}_{12\underline{13231243213}234}(E_3)
        = \mathbf{T}_{12324\cdot 12324321234}(E_3)
        = \mathbf{T}_{1232\cdot 4}(E_3)
        \\
        &=\mathbf{T}_{1232}(E_{43})
        = [\mathbf{T}_{123 \cdot 2}(E_4), \mathbf{T}_{1232}(E_3)]
        = [\mathbf{T}_{123}(E_4), E_3]
        = [E_{1234}, E_3]
        \\
        &= [x_{13}, x_{24}],
        \\
        [2]_q x_{17} &= [2]_q\mathbf{T}_{12\underline{13231243213}234}(E_3)
        = [2]_q\mathbf{T}_{12324\cdot 12324321234}(E_3)
        = [2]_q\mathbf{T}_{123\underline{24}}(E_3)
        \\
        &=[2]_q\mathbf{T}_{1234\cdot 2}(E_3)
        =[2]_q\mathbf{T}_{1234}([E_2, E_3])
        =[2]_q[\mathbf{T}_{123\cdot 4}(E_2), \mathbf{T}_{1234}(E_3)]
        \\
        &=[2]_q[\mathbf{T}_{1\cdot 23}(E_2), E_4]
        =[\mathbf{T}_{1}(E_{233}), E_4]
        =[E_{1233}, E_4]
        =[2]_q [x_7, x_{22}],
        \\
        [2]_q x_{21} &= [2]_q
        \mathbf{T}_{\underline{1213231243213234323}1}(E_2)
        =
        [2]_q \mathbf{T}_{23243\cdot 123241321324321}(E_2)
        \\
        &= [2]_q\mathbf{T}_{2324\cdot 3}(E_2)
        = \mathbf{T}_{2324}([E_3, E_{32}])
        = [\mathbf{T}_{232\cdot 4}(E_3),
        \mathbf{T}_{23\underline{24}}(E_{32})]
        \\
        &=
        [\mathbf{T}_{232}(E_{43}), \mathbf{T}_{234\cdot 2}(E_{32})]
        =  [[\mathbf{T}_{23 \cdot 2}(E_4),
        \mathbf{T}_{232}(E_3)], \mathbf{T}_{234}(E_3)]
        \\
        &= [[\mathbf{T}_{23}(E_4), E_3], E_4]
        = [E_{234}, E_3], E_4]
        = [E_{2343}, E_4] = [x_{19}, x_{22}].
    \end{align*}
    Continuing in this manner, we get
    \begin{align*}
        x_9 &= \mathbf{T}_{121323 \cdot 12}(E_4)
        = \mathbf{T}_{12132\cdot 3}(E_4)
        = \mathbf{T}_{12132}(E_{34})
        = [\mathbf{T}_{12132}(E_3), \mathbf{T}_{12\underline{13}2}(E_4)]
        \\
        &= [x_6, \mathbf{T}_{123\cdot 12}(E_4)]
        = [x_6, \mathbf{T}_{123}(E_4)]
        = [x_6, x_{13}],
        \\
        x_{10} &= \mathbf{T}_{1213231\underline{24}}(E_3)
        = \mathbf{T}_{12132314 \cdot 2}(E_3)
        = \mathbf{T}_{12132314}(E_{23})
        \\
        &= [\mathbf{T}_{1213231 \cdot 4}(E_2),
        \mathbf{T}_{1213\cdot 2314}(E_3)]
        = [\mathbf{T}_{1213231}(E_2), \mathbf{T}_{12\underline{13}}(E_4)]
        \\
        &= [x_8, \mathbf{T}_{123 \cdot 1}(E_4)]
        =  [x_8, \mathbf{T}_{123}(E_4)]
        =  [x_8, x_{13}],
        \\
        [2]_q x_{11} &= [2]_q \mathbf{T}_{121323124\cdot 3}(E_2) =
        \mathbf{T}_{121323124}([E_3, E_{32}])
        \\
        &= [\mathbf{T}_{121323124}(E_3),
        \mathbf{T}_{12\underline{1323124}}(E_{32})]
        = [x_{10}, \mathbf{T}_{12312314 \cdot 2}(E_{32})]
        \\
        &= [x_{10}, \mathbf{T}_{123\cdot 12314}(E_3)]
        = [x_{10}, \mathbf{T}_{123}(E_4)]
        = [x_{10}, x_{13}],
        \\
        [2]_q x_{14} &= [2]_q \mathbf{T}_{121323124321\cdot 3}(E_2)
        = \mathbf{T}_{121323124321} ([E_3, E_{32}])
        \\
        &= [\mathbf{T}_{121323124321}(E_3),
        \mathbf{T}_{121323124321}(E_{32})]
        = [x_{13},
        \mathbf{T}_{121323124321}\mathbf{T}_{32}(E_3)]
        \\
        &= [x_{13}, x_{15}],
        \\
        [2]_q x_{12} &= [2]_q \mathbf{T}_{\underline{12132312432}}(E_1)
        = [2]_q \mathbf{T}_{231213423\cdot 12}(E_1)
        = [2]_q \mathbf{T}_{2312134\cdot 23}(E_2)
        \\
        &= \mathbf{T}_{2312134}(E_{233})
        = [\mathbf{T}_{2312134}(E_{23}), \mathbf{T}_{23\cdot 12134}(E_3)]
        \\
        &= [\mathbf{T}_{23121342}(E_3), \mathbf{T}_{23}(E_4)]
        = [\mathbf{T}_{23121342\cdot 1}(E_3), E_{234}]
        \\
        &= [\mathbf{T}_{\underline{121323124}}(E_3), E_{234}]
        = [x_{10}, x_{15}],
        \\
        x_{16} &= \mathbf{T}_{12132312432132\cdot 3}(E_4) =
        \mathbf{T}_{12132312432132}(E_{34})
        \\
        &=
        [\mathbf{T}_{12132312432132}(E_3),
        \mathbf{T}_{12132312432132}(E_4)]
        \\
        &=
        [\mathbf{T}_{12132312432132}(E_3),
        \mathbf{T}_{12132312432132}\mathbf{T}_{34}(E_3)]
        = [x_{15}, x_{17}],
        \\
        [2]_q x_{18} &= [2]_q \mathbf{T}_{1213231243213234\cdot 3}(E_2) =
        \mathbf{T}_{1213231243213234}([E_3, E_{32}])
        \\
        &= [\mathbf{T}_{1213231243213234}(E_3),
        \mathbf{T}_{1213231243213234}(E_{32})]
        \\
        &= [x_{17}, \mathbf{T}_{1213231243213234}\mathbf{T}_{32}(E_3)]
        = [x_{17}, x_{19}],
        \\
        [2]_qx_{20} &=
        [2]_q\mathbf{T}_{12\underline{1323124321323432}3}(E_1)
        = [2]_q\mathbf{T}_{123243\cdot 1232143231423}(E_1)
        \\
        &= [2]_q\mathbf{T}_{12324\cdot 3}(E_2) = \mathbf{T}_{12324}([E_3,
        E_{32}]) = [\mathbf{T}_{1232\cdot 4}(E_3),
        \mathbf{T}_{123\underline{24}}(E_{32})]
        \\
        &=
        [\mathbf{T}_{1232}(E_{43}), \mathbf{T}_{1234 \cdot 2}(E_{32})]
        =  [[\mathbf{T}_{123 \cdot 2}(E_4),
        \mathbf{T}_{1232}(E_3)], \mathbf{T}_{1234}(E_3)]
        \\
        &= [[\mathbf{T}_{123}(E_4), E_3], E_4]
        = [E_{1234}, E_3], E_4]
        = [E_{12343}, E_4] = [x_{17}, x_{22}].
    \end{align*}
    Finally, we can use $q$-associativity to prove the remaining identities,
    \begin{align*}
        &x_4 = [x_2, x_{24}] = [[x_1, x_3], x_{24}] = [x_1, [x_3, x_{24}]]
        = [x_1, x_6],
        \\
        &[2]_q x_7 = [x_4, x_{24}] = [[x_1, x_6], x_{24}] =
        [x_1, [x_6, x_{24}]] = [2]_q [x_1, x_8],
        \\
        &x_{13} = [x_4, x_{22}] = [[x_1, x_6], x_{22}] = [x_1, [x_6,
        x_{22}]] = [x_1, x_{15}],
        \\
        &x_{17} = [x_7, x_{22}] = [[x_1, x_8], x_{22}] = [x_1, [x_8,
        x_{22}]] = [x_1, x_{19}],
        \\
        &[2]_q x_{20} = [x_{17}, x_{22}] = [[x_1, x_{19}], x_{22}] = [x_1,
        [x_{19}, x_{22}]] = [2]_q [x_1, x_{21}].
    \end{align*}

\end{proof}

\begin{lemma}
    \label{y's as commutators}

    Let $\mathfrak{g}$ be the Lie algebra of type $F_4$, and let $y_1,\dots,
    y_{24}$ be the Lusztig root vectors  (recall (\ref{xi, yi definition}))
    corresponding to the reduced expression $\mathbf{R}^\prime[w_o]$ (see
    (\ref{two reduced expressions})) of the longest element of the Weyl group
    of $\mathfrak{g}$. Then

    \begin{enumerate}

         \begin{multicols}{3}

            \item[] $y_3 = [y_2, y_{24}]$,

            \item[] $y_4 = [y_3, y_{17}]$,

            \item[] $y_5 = [y_3, y_{21}]$,

            \item[] $y_6 = \frac{1}{[2]_q} [y_4, y_{17}]$,

            \item[] $y_7 = \frac{1}{[2]_q} [y_4, y_{19}]$,

            \item[] $y_8 = [y_5, y_{17}]$,

            \item[] $y_9 = \frac{1}{[2]_q} [y_8, y_{10}]$,

            \item[] $y_{10} = [y_5, y_{19}]$,

            \item[] $y_{11} = \frac{1}{[2]_q} [y_5, y_{21}]$,

            \item[] $y_{12} = \frac{1}{[2]_q} [y_5, y_{23}]$,

            \item[] $y_{13} = [y_{10}, y_{17}]$,

            \item[] $y_{14} = [y_{12}, y_{17}]$,

            \item[] $y_{15} = \frac{1}{[2]_q}[y_{14}, y_{17}]$,

            \item[] $y_{16} = \frac{1}{[2]_q}[y_{14}, y_{19}]$,

            \item[] $y_{17} = [y_1, y_{21}]$,

            \item[] $y_{18} = \frac{1}{[2]_q}[y_{17}, y_{19}]$,

            \item[] $y_{19} = [y_1, y_{23}]$,

            \item[] $y_{19} = [y_{17}, y_{24}]$,

            \item[] $y_{20} = [y_{19}, y_{21}]$,

            \item[] $y_{22} = \frac{1}{[2]_q}[y_{21}, y_{23}]$,

            \item[] $y_{23} = [y_{21}, y_{24}]$.

         \end{multicols}

     \end{enumerate}

\end{lemma}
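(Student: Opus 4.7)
The plan is to mirror the proof of Lemma \ref{x's as commutators} verbatim, working with the reduced expression $\mathbf{R}^\prime[w_o]$ in place of $\mathbf{R}[w_o]$. I would first identify which of the $y_i$'s coincide (up to scalar) with Chevalley generators by applying Proposition \ref{proposition 1}. Since $\mathbf{R}^\prime[w_o]$ opens with $4$, one reads off $y_1 = E_4$; since $s_4$ and $s_1$ commute in $F_4$, the second letter gives $y_2 = E_1$. The remaining two Chevalley generators sit near the end of $\mathbf{R}^\prime[w_o]$: using $w_o = -1$ on the $F_4$ root lattice, a short calculation of $(s_2 s_3 s_2 s_3)(\alpha_3) = -\alpha_3$ shows $\beta_{21}^\prime = \alpha_3$ and $\beta_{24}^\prime = \alpha_2$, so Proposition \ref{proposition 1} delivers $y_{21} = E_3$ and $y_{24} = E_2$. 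These are precisely the four $y_i$'s that do not appear on the left-hand side of any identity in the statement, and the degrees they force are consistent with those asserted in the lemma (for instance, $y_3 = [y_2, y_{24}]$ has degree $\alpha_1 + \alpha_2$).

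With the Chevalley generators located, I would proceed by induction on the height of $\beta_i^\prime$, which coincides with $\mathbb{N}$-degree relative to the coweight $\lambda = \varpi_1^\vee + \varpi_2^\vee + \varpi_3^\vee + \varpi_4^\vee$. For each $y_i = T_{s_{i_1}} \cdots T_{s_{i_{k-1}}}(E_{i_k})$, the task is to use braid relations in the Weyl group of $F_4$ (commutations for non-adjacent simple roots, the braid relation $s_i s_j s_i = s_j s_i s_j$ for simply laced adjacent pairs, and the quartic relation $s_2 s_3 s_2 s_3 = s_3 s_2 s_3 s_2$ for the double bond) to rewrite the prefix so as to isolate a Lusztig symmetry $T_{w^\prime}$ applied to a $q$-Serre bracket $[E_a, E_b]$ or to a divided-power version thereof. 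Since each $T_{w^\prime}$ is an algebra automorphism, $T_{w^\prime}([E_a, E_b]) = [T_{w^\prime}(E_a), T_{w^\prime}(E_b)]$, and one identifies the factors on the right with previously established $y_r, y_s$. The divided-power definition $T_i(E_j) = (\operatorname{ad}_q E_i)^{(-c_{ij})}(E_j)$ introduces the $\frac{1}{[2]_q}$ normalizations in the lemma statement whenever the quartic braid relation at the double bond is exploited.

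For the two presentations of $y_{19}$ listed in the lemma, namely $y_{19} = [y_1, y_{23}] = [y_{17}, y_{24}]$, I would avoid redoing the braid manipulation and instead invoke $q$-associativity. Using $y_{23} = [y_{21}, y_{24}]$ and $y_{17} = [y_1, y_{21}]$ (both established earlier in the induction), and noting that $[y_1, y_{24}] = [E_4, E_2] = 0$ by the $q$-Serre relations (since $\alpha_2$ and $\alpha_4$ are non-adjacent), the graded Jacobi identity for $q$-commutators yields
\[
[y_1, y_{23}] = [y_1, [y_{21}, y_{24}]] = [[y_1, y_{21}], y_{24}] = [y_{17}, y_{24}],
\]
exactly as in the analogous steps at the end of the proof of Lemma \ref{x's as commutators}.

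The main obstacle is purely bookkeeping: for each of the roughly twenty identities one must produce the appropriate braid manipulation of a prefix of $\mathbf{R}^\prime[w_o]$ and verify that the resulting Lusztig symmetry, applied to the relevant $q$-Serre or divided-power bracket, yields the stated $y_r$'s. This is mechanical but voluminous in $F_4$, where the double bond forces one to track whether each $T_i(E_j)$ is a plain $q$-commutator or a divided-power one, and where the abundance of nontrivial braid relations admits many reduced subwords, each of which must give a consistent answer. No essentially new technique beyond what was done in Lemma \ref{x's as commutators} is required; the content is a careful combinatorial exercise.
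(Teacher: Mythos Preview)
Your proposal is correct and follows the same overall strategy as the paper: locate the four Chevalley generators among the $y_i$'s via Proposition \ref{proposition 1}, then establish the remaining identities by braid manipulations on prefixes of $\mathbf{R}^\prime[w_o]$ together with $q$-associativity. The paper, however, opens with an efficiency observation you do not mention: the two reduced expressions share the common substring $\mathbf{R}^\prime[6,23] = \mathbf{R}[2,19]$, so there is an algebra isomorphism $\mathbf{U}^\prime_{[6,23]} \to \mathbf{U}_{[2,19]}$ sending $y_i \mapsto x_{i-4}$. This transfers eight of the identities (those for $y_9$, $y_{13}$, $y_{14}$, $y_{15}$, $y_{16}$, $y_{18}$, $y_{20}$, $y_{22}$) directly from Lemma \ref{x's as commutators} with no new computation, leaving only about a dozen braid manipulations to carry out by hand. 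Your plan would reproduce all of these from scratch, which works but roughly doubles the bookkeeping; the paper's shortcut is worth adopting.
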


\begin{proof}

    In the proof of this lemma we adopt the same abbreviation for
    $q$-commutators as used in the proof of Lemma \ref{x's as commutators}.

    Observe first that the two reduced expressions for $w_o$ in (\ref{two
    reduced expressions}) share a common substring, namely ${\mathbf
    R}^\prime[6, 23] = {\mathbf R}[2, 19]$. Hence, there is an algebra
    isomorphism $\Phi: \mathbf{U}^\prime_{[6,23]} \to \mathbf{U}_{[2,19]}$ such
    that $\Phi(y_i) = x_{i - 4}$ for all $i\in [6, 23]$. Thus, the commutation
    relations among the $x_i$'s given in Lemma \ref{x's as commutators}
    translate into commutation relations among the $y_i$'s. In particular, we
    have $y_9 = \frac{1}{[2]_q} [y_8, y_{10}]$, $y_{13} = [y_{10}, y_{17}]$,
    $y_{14} = [y_{12}, y_{17}]$, $y_{15} = \frac{1}{[2]_q} [y_{14}, y_{17}]$,
    $y_{16} = \frac{1}{[2]_q} [y_{14}, y_{19}]$, $y_{18} = \frac{1}{[2]_q}
    [y_{17}, y_{19}]$, $y_{20} = [y_{19}, y_{21}]$, and $y_{22} =
    \frac{1}{[2]_q} [y_{21}, y_{23}]$.

    Next we apply Proposition \ref{proposition 1} to identify which of the
    $y_i$'s correspond to the standard Chevalley generators $E_i$.  We get $y_1
    = E_4$, $y_2 = E_1$, $y_{21} = E_3$, and $y_{24} = E_2$. We next identify
    the $y_i$'s that can be written as $q$-commutators of these $E_i$'s. For
    example, we have $y_3 = \mathbf{T}_{\underline{41}}(E_2) =
    \mathbf{T}_{1\cdot 4}(E_2) = \mathbf{T}_1(E_2) = E_{12} = [y_2, y_{24}]$.
    We note here we have adopted the same underlining notation, as in
    $\mathbf{T}_{\underline{41}}$ above, as well as the dot notation, as in
    $\mathbf{T}_{1 \cdot 4}$ above, used in the proof of Lemma \ref{x's as
    commutators}.  With this, we also have $y_{17} = \mathbf{T}_{4\cdot
    123421323124321}(E_3) = \mathbf{T}_{4}(E_3) = E_{43} = [y_1, y_{21}]$ and
    $y_{23} = \mathbf{T}_{\underline{4123421323124321323432}}(E_3) =
    \mathbf{T}_{32\cdot 12321432341232143234}(E_3) = \mathbf{T}_{32}(E_3) =
    E_{32} = [y_{21}, y_{24}]$.

    Now that we have established some of the identities of this lemma, we can
    continue with this same strategy to establish further identities. We have
    \[
        \begin{split}
            y_5 &= \mathbf{T}_{\underline{412}3}(E_4) = \mathbf{T}_{12\cdot
            43}(E_4) = \mathbf{T}_{12}(E_3) = E_{123} = [y_3, y_{21}],
            \\
            y_{19} &= \mathbf{T}_{4\underline{12342132312432132}}(E_3) =
            \mathbf{T}_{432\cdot 123214323412321}(E_3) = \mathbf{T}_{4\cdot
            32}(E_3) = \mathbf{T}_4(E_{32})
            \\
            &= E_{432} = [y_{17}, y_{24}],
            \\
            y_4 &= \mathbf{T}_{41\cdot 2}(E_3) = \mathbf{T}_{41}([E_2, E_3]) =
            [\mathbf{T}_{4\cdot 1}(E_2), \mathbf{T}_{4\cdot 1}(E_3)] =
            [\mathbf{T}_4(E_{12}), \mathbf{T}_4(E_3)]
            \\
            &= [E_{12}, E_{43}] = [y_3, y_{17}].
        \end{split}
    \]

    Next we compute a few more identities that build off of the identities
    already established. We have
    \begin{align*}
        [2]_q y_6 &= [2]_q \mathbf{T}_{4123\cdot 4}(E_2) = [2]_q
        \mathbf{T}_{41\cdot 23}(E_2) =  \mathbf{T}_{41}(E_{233})
        \\
        &= [\mathbf{T}_{41}(E_{23}), \mathbf{T}_{4\cdot 1}(E_3)] = [y_4,
        \mathbf{T}_4(E_3)] = [y_4, y_{17}],
        \\
        y_7 &= \mathbf{T}_{41234\cdot 2}(E_1) = \mathbf{T}_{41234}([E_2, E_1])
        = [\mathbf{T}_{41234}(E_2), \mathbf{T}_{41234}(E_1)] = [y_6, E_2]
        \\
        &= [y_6, y_{24}],
        \\
        y_8 &= \mathbf{T}_{412342\cdot 1}(E_3) =
        \mathbf{T}_{4123\underline{42}}(E_3) = \mathbf{T}_{41232\cdot 4}(E_3) =
        \mathbf{T}_{41232}([E_4, E_3]) \\
        &= [\mathbf{T}_{4123\cdot 2}(E_4), \mathbf{T}_{4\cdot 1232}(E_3)] =
        [\mathbf{T}_{4123\cdot 2}(E_4), \mathbf{T}_{4\cdot 1232}(E_3)]
        \\
        &= [\mathbf{T}_{4123}(E_4), \mathbf{T}_4(E_3)] = [y_5, y_{17}],
        \\
        y_{10} &= \mathbf{T}_{4123421\cdot 32}(E_3) =
        \mathbf{T}_{4123421}([E_3, E_2]) = [\mathbf{T}_{412342\cdot 1}(E_3),
        \mathbf{T}_{4123421}(E_2)]
        \\
        &= [\mathbf{T}_{412342}(E_3), E_2] = [y_8, y_{24}],
        \\
        [2]_q y_{11} &= [2]_q \mathbf{T}_{\underline{412342}1323}(E_1) = [2]_q
        \mathbf{T}_{123\cdot 4321323}(E_1) = [2]_q \mathbf{T}_{1\cdot 23}(E_2)
        \\
        &= \mathbf{T}_{1}([E_{23}, E_3]) = [\mathbf{T}_{1}(E_{23}),
        \mathbf{T}_1(E_3)] = [E_{123}, E_{3}] = [y_5, y_{21}],
        \\
        y_{12} &= \mathbf{T}_{\underline{41234213231}}(E_2) =
        \mathbf{T}_{1232\cdot 4321323}(E_2) = \mathbf{T}_{123\cdot 2}(E_1) =
        \mathbf{T}_{123}([E_2, E_1])
        \\
        &= [\mathbf{T}_{123}(E_2), \mathbf{T}_{123}(E_1)] =
        [\mathbf{T}_{123}(E_2), E_2] = [y_{11}, y_{24}].
    \end{align*}
    Finally, we can use $q$-associativity to establish the remaining
    identities,
    \begin{align*}
        &[2]_q y_7 = [2]_q [y_6, y_{24}] = [[y_4, y_{17}], y_{24}] = [y_4,
        [y_{17}, y_{24}]] = [y_4, y_{19}],
        \\
        &y_{19} = [[E_4, E_3], E_2] = [E_4, [E_3, E_2]] = [y_1, y_{23}],
        \\
        &y_{10} = [y_8, y_{24}] = [[y_5, y_{17}], y_{24}] = [y_5, [y_{17},
        y_{24}] = [y_5, y_{19}],
        \\
        &[2]_q y_{12} = [2]_q [y_{11}, y_{24}] = [[y_5, y_{21}], y_{24}] =
        [y_5, [y_{21}, y_{24}]] = [y_5, y_{23}],
    \end{align*}

\end{proof}

\section{Quantum Symmetric Matrices}

The algebra of $n \times n$ quantum symmetric matrices \cite{Kamita, Noumi} is
a quantized nilradical ${\mathcal U}_q(\mathfrak{n}_J)$ for the case when the
underlying Lie algebra $\mathfrak{g}$ is of type $C_n$ and $J =
\left\{\alpha_n\right\}$.  In this section, we prove that Conjecture \ref{conj}
holds in this case.

We let $x_{ij}$ ($1 \leq i \leq j \leq n$) denote the standard generators of
the algebra of quantum symmetric matrices. The defining relations are given in
\cite[Proposition 5.2]{Kamita}.  Let ${\mathcal N}_J$ be the normal subalgebra
of ${\mathcal U}_q(\mathfrak{n}_J)$. It is generated by the normal elements
$\Theta_1,\dots, \Theta_n$. Here, the quantized nilradical ${\mathcal
U}_q(\mathfrak{n}_J)$ is $\mathbb{N}$-graded with $\operatorname{deg}(x_{ij}) =
1$ for all $1 \leq i \leq j \leq n$. In view of this,
$\operatorname{deg}(\Theta_i) = i$ (for $1 \leq i \leq n$). The simple roots
are $\alpha_i = e_i - e_{i + 1}$ (for $1\leq i < n$) and $\alpha_n = 2e_n$. The
fundamental weights are $\varpi_i = e_1 + \cdots + e_i$ (for $1\leq i \leq n$).
Let $Q = \mathbb{Z}\alpha_1 + \cdots + \mathbb{Z}\alpha_n$ and $P =
\mathbb{Z}\varpi_1 + \cdots + \mathbb{Z}\varpi_n$ be the root lattice and
weight lattice respectively, and let $\langle -,-\rangle$ be the symmetric
bilinear form on $P$ defined by the rule $\langle e_i, e_j \rangle =
\delta_{ij}$. The algebra ${\mathcal U}_q(\mathfrak{n}_J)$ is $Q$-graded with
\[
    \operatorname{deg}_Q(x_{ij}) = e_i + e_j = (\alpha_i + \alpha_{i + 1} +
    \cdots \alpha_n) + (\alpha_j + \alpha_{j + 1} + \cdots + \alpha_{n-1}).
\]
Consider the parabolic element $w_J : = w_o^Jw_o \in W$. We have the reduced
expression
\begin{equation}
    \label{reduced expression, q-symmetric}
    w_J = (s_n s_{n-1} \cdots s_1)(s_n s_{n-1} \cdots s_2) \cdots (s_n s_{n-1})
    s_n.
\end{equation}
We recall (\ref{commutation with normal subalgebra}) the commutation relations,
\[
    x_{ij}\Theta_k = q^{-\langle \operatorname{deg}_Q(x_{ij}), (1 +
    w_J)\varpi_k \rangle} \Theta_k x_{ij}
\]
for $1\leq i \leq j \leq n$ and $1 \leq k \leq n$.

\begin{proposition}
    \label{Proposition, q-symmetric}

    Suppose $\mathfrak{g}$ is the Lie algebra of type $C_n$ with $n > 1$, and
    suppose $J = \left\{\alpha_n\right\}$.  If $\psi$ is an automorphism of the
    quantized nilradical ${\mathcal U}_q(\mathfrak{n}_J)$, then $\psi(\Theta_i)
    \in \mathbb{K}^\times \Theta_i$ for every $i \in \left\{1,\dots,n\right\}$.

\end{proposition}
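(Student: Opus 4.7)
My plan to prove Proposition \ref{Proposition, q-symmetric} has three stages: show $\psi$ is graded, identify $\mathcal{N}_J$ as a commutative polynomial algebra, then inductively force $\psi(\Theta_k) \in \mathbb{K}^\times \Theta_k$.

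First, I would apply Theorem \ref{when Aut is Gr-Aut} to conclude that $\psi$ is a graded automorphism with respect to the $\mathbb{N}$-grading on $R = \mathcal{U}_q(\mathfrak{n}_J)$ induced by $\varpi_n^\vee$. The conditions that $R$ is connected graded, locally finite, and generated by $R_1$ are standard. To verify $\mathcal{C}(R) = R$, note that in the reduced expression (\ref{reduced expression, q-symmetric}) only $s_1$ appears exactly once, so by (\ref{definition PxR}) $|P_x(R)| \leq 1$; one checks directly that the corresponding Lusztig root vector appears in some Levendorskii-Soibelmann commutator, giving $F_x(R) = \emptyset$ and hence $\mathcal{C}(R) = R$. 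The final hypothesis on $q$-commutation of degree-one root vectors is immediate from the defining relations of quantum symmetric matrices.

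Second, I would exploit that $w_J$ is an involution. Indeed, $w_J(e_i) = -e_{n+1-i}$ yields $w_J^2 = 1$, and combining $W$-invariance of $\langle \cdot, \cdot \rangle$ with its symmetry gives $\langle (1+w_J)\varpi_i, (1-w_J)\varpi_j \rangle = 0$ for all $i, j$. By (\ref{commutation with normal subalgebra}), the generators $\Theta_i$ and $\Theta_j$ then commute, so $\mathcal{N}_J \cong \mathbb{K}[\Theta_1, \ldots, \Theta_n]$ is a commutative polynomial ring in which $\deg(\Theta_k) = k$. Since $\psi$ preserves $\mathcal{N}_J$ and its $\mathbb{N}$-graded pieces, $\psi(\Theta_k) \in (\mathcal{N}_J)_k$, a finite-dimensional space spanned by the monomials $\Theta^{\mathbf a}$ with $\sum_j j\, a_j = k$.

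Third, I would induct on $k$. The case $k=1$ is trivial since $(\mathcal{N}_J)_1 = \mathbb{K}\Theta_1$. For $k > 1$, $\psi(\Theta_k)$ is at once a normal and a prime element of $R$ inside $(\mathcal{N}_J)_k$. Normality forces any two monomials appearing with nonzero coefficient in $\psi(\Theta_k)$ to share the same conjugation character $(1+w_J)\sum_j a_j \varpi_j$; the inductive hypothesis, together with primality of $\psi(\Theta_k)$ in $R$ and explicit commutation checks against the generators $x_{ab}$, then eliminates all summands except the $\Theta_k$-term. The alternative $\psi(\Theta_k) \in \mathbb{K}^\times \Theta_j$ with $j \neq k$ is ruled out by the graded condition $k = \deg \psi(\Theta_k) = \deg \Theta_j = j$, leaving $\psi(\Theta_k) \in \mathbb{K}^\times \Theta_k$.

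The principal obstacle will be the inductive step. Because $(1+w_J)\varpi_k = (1+w_J)\varpi_{n-k}$ and $(1+w_J)\varpi_n = 0$, multiple monomials in $(\mathcal{N}_J)_k$ may share a conjugation character, so normality alone cannot pin $\psi(\Theta_k)$ down to a single monomial. The key technical task will be to exploit primality in $R$, which is strictly stronger than irreducibility in $\mathcal{N}_J = \mathbb{K}[\Theta_1,\ldots,\Theta_n]$, together with the detailed commutation relations of quantum symmetric matrices, to rule out each of the remaining non-$\Theta_k$ summands.
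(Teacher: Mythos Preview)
Your first two stages are sound and match the paper's setup. The gap is in stage three: you correctly diagnose that normality alone cannot separate the monomials in $(\mathcal{N}_J)_k$, but the fix you propose --- primality of $\psi(\Theta_k)$ in $R$ combined with unspecified ``explicit commutation checks'' --- is not an argument yet. Primality in $R$ does not by itself rule out a sum such as $c\,\Theta_k + c'\,\Theta_{k-1}\Theta_1$: a linear combination of products of primes can still be prime in a larger ring, and you have not said which commutation relation would distinguish the summands. The inductive hypothesis does not help either, since knowing $\psi(\Theta_j)\in\mathbb{K}^\times\Theta_j$ for $j<k$ gives no direct leverage on the $\Theta_k$-coefficient in $\psi(\Theta_k)$.

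The paper's proof closes this gap with one concrete idea that replaces both your induction and your appeal to primality. From $(\mathcal{N}_J)_1=\mathbb{K}\Theta_1$ one gets $\psi(\Theta_1)\in\mathbb{K}^\times\Theta_1$; then the characterization $\mathbb{K}x_{nn}=\{x\in R_1: x\Theta_1=q^2\Theta_1 x\}$ forces $\psi(x_{nn})\in\mathbb{K}^\times x_{nn}$. Now $x_{nn}$ serves as a \emph{partition counter}: since $x_{nn}\Theta_j=q^2\Theta_j x_{nn}$ for every $j<n$, one has $x_{nn}\Theta^\nu=q^{2\,\mathrm{parts}(\nu)}\Theta^\nu x_{nn}$ for any monomial $\Theta^\nu$ of degree $k<n$. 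Applying $\psi$ to $x_{nn}\Theta_k=q^2\Theta_k x_{nn}$ kills every summand of $\psi(\Theta_k)$ with more than one part, leaving $\psi(\Theta_k)\in\mathbb{K}^\times\Theta_k$ for all $k<n$ in one stroke. The remaining case $k=n$ is handled separately: $\Theta_n$ generates the center of $R$, so $\psi(\Theta_n)\in\mathbb{K}^\times\Theta_n$ follows immediately. This is exactly the ``explicit commutation check'' you were looking for, but with the specific generator $x_{nn}$ identified and the mechanism (counting parts via the exponent of $q$) made precise.
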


\begin{proof}

    As in the proof of Theorem \ref{Theorem, B6}, the hypotheses in Theorem
    \ref{when Aut is Gr-Aut} involving the core and the existence of relations
    of the form $xy = \kappa yx$ can be seen to be satisfied by observing the
    relevant properties of the reduced expression (\ref{reduced expression,
    q-symmetric}).  Hence, every automorphism of ${\mathcal
    U}_q(\mathfrak{n}_J)$ preserves the $\mathbb{N}$-grading.

    The normal subalgebra ${\mathcal N}_J$ is invariant under any algebra
    automorphism of ${\mathcal U}_q(\mathfrak{n}_J)$. Since
    $\operatorname{dim}_\mathbb{K}({\mathcal N}_J)_1 = 1$ (in fact, $({\mathcal
    N}_J)_1 = \mathbb{K}\Theta_1$), then $\psi(\Theta_1) \in \mathbb{K}^\times
    \Theta_1$.  Furthermore, since $\mathbb{K}x_{nn} = \left\{ x\in ({\mathcal
    U}_q(\mathfrak{n}_J))_1 : x\Theta_1 = q^2\Theta_1x \right\}$, $\psi(x_{nn})
    = \mathbb{K}^\times x_{nn}$.

    For $k \in \left\{1,\dots, n\right\}$, $\operatorname{dim}_{\mathbb{K}}
    \left( \left({\mathcal N}_J \right)_k \right) = {\mathcal P}(k)$, where
    ${\mathcal P}$ is the partition function.  We also have $x_{nn}\Theta_k =
    q^2 \Theta_k x_{nn}$ for every $k \in \left\{1,\dots, n - 1\right\}$.

    For a fixed natural number $k\in \mathbb{N}$, we represent a partition
    $\nu$ of $k\in \mathbb{N}$ (and write $\nu \vdash k$) by a weakly
    increasing sequence of natural numbers $\nu_1 \leq \nu_2 \leq \cdots$ such
    that $\sum_i \nu_i = k$. For a partition $\nu \vdash k$, we let
    $\operatorname{parts}(\nu)$ be the number of parts of $\nu$ and write $\nu
    = (\nu_1,\nu_2,\dots, \nu_{\operatorname{parts}(\nu)})$, and we define the
    monomial
    \[
        \Theta^\nu := \Theta_{\nu_1}\Theta_{\nu_2} \cdots
        \Theta_{\nu_{\operatorname{parts}(\nu)}} \in \left({\mathcal
        N}_J\right)_k.
    \]
    Suppose
    \[
        \psi(\Theta_k) = \sum_{\nu\vdash k} c_{\nu,k} \Theta^{\nu},
        \hspace{10mm} (1\leq k \leq n).
    \]
    for scalars $c_{\nu, k} \in \mathbb{K}$. For $1\leq k < n$, we apply the
    automorphism $\psi$ to the relation $x_{nn}\Theta_k = q^2\Theta_k x_{nn}$
    to conclude that
    \[
        \sum_{\nu \vdash k}c_{\nu, k} x_{nn}\Theta^\nu =
        \sum_{\nu \vdash k}q^2 c_{\nu, k} \Theta^\nu x_{nn}.
    \]
    However since $x_{nn}\Theta_k = q^2\Theta_kx_{nn}$ for every $k \in
    \left\{1,\dots., n-1\right\}$, then in the above sum we can replace
    $x_{nn}\Theta^\nu$ with $q^{2 \cdot \operatorname{parts}(\nu)}\Theta^\nu
    x_{nn}$. Thus,
    \[
        \sum_{\nu \vdash k}\left(q^{2 \cdot \operatorname{parts}(\nu)} -
        q^2\right) c_{\nu, k} \Theta^\nu x_{nn} = 0.
    \]
    Since $q$ is not a root of unity, the only nonzero coefficients $c_{\nu,
    k}$ appearing in the above sum are those such that
    $\operatorname{parts}(\nu) = 1$. In other words, there is at most one
    monomial $\Theta^\nu$ in the sum $\sum_{\nu \vdash k} c_{\nu, k}\Theta^\nu$
    with a nonzero coefficient, namely $\Theta_k$. Hence $\psi(\Theta_k) \in
    \mathbb{K}^\times \Theta_k$ for $k < n$.

    Finally, consider $\Theta_n$. Since $\Theta_n$ generates the center of
    ${\mathcal U}_q(\mathfrak{n}_J)$ (see e.g. \cite{J}), then $\psi(\Theta_n)
    \in \mathbb{K}^\times \Theta_n$.

\end{proof}

\begin{theorem}
    \label{Theorem, q-symmetric}

    If $\mathfrak{g}$ is the Lie algebra of type $C_n$ with $n > 1$ and $J =
    \left\{\alpha_n\right\}$ (i.e. ${\mathcal U}_q(\mathfrak{n}_J)$ is the
    algebra of $n\times n$ quantum symmetric matrices), then
    $\operatorname{Aut}({\mathcal U}_q(\mathfrak{n}_J)) \cong
    \left(\mathbb{K}^\times\right)^n$.

\end{theorem}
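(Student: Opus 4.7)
The plan is to deduce the Theorem as a short consequence of Proposition \ref{Proposition, q-symmetric} combined with the rigidity results of \cite[Theorems 5.3 and 5.5]{GY2}, following the same two-step pattern used in the proofs of Theorems \ref{Theorem, G2} and \ref{Theorem, B6}. The Proposition already establishes the hardest part: every algebra automorphism $\psi$ of ${\mathcal U}_q(\mathfrak{n}_J)$ preserves the $\mathbb{N}$-grading and satisfies $\psi(\Theta_i) \in \mathbb{K}^\times \Theta_i$ for all $i \in \left\{1,\dots,n\right\}$. The reduced expression (\ref{reduced expression, q-symmetric}) contains every simple reflection $s_k$, so $\operatorname{supp}(w_J) = \left\{1,\dots,n\right\}$ and ${\mathcal U}_q(\mathfrak{n}_J)$ has rank $n$ as a CGL extension. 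Consequently, once every automorphism is shown to be diagonal, \cite[Theorems 5.3 and 5.5]{GY2} yield the isomorphism $\operatorname{Aut}({\mathcal U}_q(\mathfrak{n}_J)) \cong \left(\mathbb{K}^\times\right)^n$ at once.

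To show diagonality, I would exploit (\ref{commutation with normal subalgebra}). Writing $\psi(x_{ij}) = \sum c_{i'j'} x_{i'j'}$ as a linear combination of degree-one generators (possible by graded-ness) and applying $\psi$ to $x_{ij} \Theta_k = q^{-\langle (1 + w_J) \varpi_k,\, e_i + e_j \rangle} \Theta_k x_{ij}$ forces $c_{i'j'} = 0$ unless $\langle (1 + w_J) \varpi_k,\, e_{i'} + e_{j'} - e_i - e_j \rangle = 0$ for every $k$. A direct computation, using the facts that $w_o$ acts as $-1$ on the $C_n$ weight lattice and that $w_o^J$ (the longest element of the $A_{n-1}$ subgroup $\langle s_1,\dots, s_{n-1}\rangle$) acts by $e_i \mapsto e_{n+1-i}$, yields
\[
    (1 + w_J)\varpi_k = \sum_{i=1}^{k} e_i - \sum_{i=n-k+1}^{n} e_i.
\]

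The main obstacle is that these $n$ functionals do not separate all distinct weights of the form $e_i + e_j$ with $1 \leq i \leq j \leq n$. For example, when $n = 3$ the weights $e_1 + e_3$ and $2 e_2$ have identical inner product with every $(1 + w_J) \varpi_k$, and analogous ambiguities arise for every $n \geq 3$ among weights with matching ``band profiles''. To eliminate the residual off-diagonal coefficients, I would proceed case-by-case in the spirit of the proof of Theorem \ref{F4: diagonal}: apply $\psi$ to carefully chosen defining relations from \cite[Proposition 5.2]{Kamita} relating the ambiguous pairs of generators, use Corollary \ref{LS corollary} to control the straightening of unordered monomials into PBW form, and compare coefficients in distinct $Q$-homogeneous components to force the unwanted coefficients to vanish. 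Once each degree-one generator is confirmed to be an eigenvector of $\psi$, the theorem follows from \cite[Theorems 5.3 and 5.5]{GY2}.
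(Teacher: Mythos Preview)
Your overall structure matches the paper's proof: both invoke Proposition \ref{Proposition, q-symmetric} to get that every automorphism $\psi$ is graded with $\psi(\Theta_k)\in\mathbb{K}^\times\Theta_k$ for all $k$, then use (\ref{commutation with normal subalgebra}) (i.e.\ Theorem \ref{root vector, fixed, 2}) to force $\psi(x_{ij})\in\mathbb{K}^\times x_{ij}$ for most degree-one generators, and finish with \cite[Theorems 5.3 and 5.5]{GY2} once diagonality is known. You have also correctly computed $(1+w_J)\varpi_k$ and correctly located the obstruction: the functionals $(1+w_J)\varpi_k$ vanish on every antidiagonal weight $e_i+e_j$ with $i+j=n+1$, so Theorem \ref{root vector, fixed, 2} alone does not separate those generators. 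The paper makes exactly this split, treating $i+j\neq n+1$ via Theorem \ref{root vector, fixed, 2} with an explicit choice of $p$ (namely $p=\min(i,k)$ or $\min(j,\ell)$ when $i+j<n+1$, and $p=\max(j-1,\ell-1)$ or $\max(i-1,k-1)$ when $i+j>n+1$).

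Where your proposal falls short is the resolution of the antidiagonal case. You propose an $F_4$-style case analysis using the Kamita relations and Corollary \ref{LS corollary}, but you do not carry it out, and for arbitrary $n$ this must be a uniform argument rather than finitely many ad hoc checks. The paper's argument here is different and cleaner. It first notes that $S:=\operatorname{span}_{\mathbb{K}}\{x_{ij}:i+j=n+1\}$ is $\psi$-invariant (being exactly the set of degree-one elements commuting with every $\Theta_k$). Then for $y\in S$ it introduces the subspace
\[
C(y):=\{\,x\in({\mathcal U}_q(\mathfrak{n}_J))_1 : yx=qxy\,\},
\]
whose dimension is an automorphism invariant. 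From the defining relations one finds that $C(x_{ij})$ is spanned by $\{x_{ik}:k<j\}\cup\{x_{kj}:k<i\}$, giving $\dim C(x_{i,\,n+1-i})=n-i$; moreover the only $y\in S$ with $\dim C(y)=n-i$ are the nonzero scalar multiples of $x_{i,\,n+1-i}$. This single numerical invariant separates all the antidiagonal generators at once and replaces the relation-by-relation analysis you sketched.
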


\begin{proof}

    Suppose $\phi$ is an automorphism of ${\mathcal U}_q(\mathfrak{n}_J)$. As
    established in Proposition \ref{Proposition, q-symmetric}, all hypotheses
    of Theorem \ref{when Aut is Gr-Aut} are satisfied. Hence, $\phi$ is a
    graded automorphism.

    Next we will apply Theorem \ref{root vector, fixed, 2} to show that each
    $x_{ij}$ with $i + j \neq n + 1$ gets sent to a multiple of itself by
    $\phi$. First, suppose $i$ and $j$ are chosen such that $1 \leq i \leq j
    \leq n$ and $i + j < n + 1$, and consider the corresponding element
    $x_{ij}$. We show that for every other Lusztig root vector $x_{k\ell}$,
    there is a normal element $\Theta_p$ such that $x_{ij}$ and $x_{k\ell}$
    commute differently with $\Theta_p$. Equivalently, this means $\langle (e_i
    + e_j) - (e_k + e_\ell), (1 + w_J)\varpi_p \rangle \neq 0$. With this,
    Theorem \ref{root vector, fixed, 2} implies $x_{ij}$ gets sent to a
    multiple of itself by the automorphism $\phi$. There are two cases to
    consider. If $i \neq k$, let $p = \operatorname{min}(i, k)$. On the other
    hand, if $i = k$, let $p = \operatorname{min}(j, \ell)$. Now suppose $i$
    and $j$ are chosen so that $1 \leq i \leq j \leq n$ and $i + j > n + 1$,
    and consider the corresponding Lusztig root vector $x_{ij}$. As before, we
    will show that for every other Lusztig root vector $x_{k\ell}$, there is a
    normal element $\Theta_p$ such that $x_{ij}$ and $x_{k\ell}$ commute
    differently with $\Theta_p$. Here, if $j \neq \ell$, put $p =
    \operatorname{max}(j - 1, \ell - 1)$. However, if $j = \ell$, put $p =
    \operatorname{max}(i - 1, k - 1)$.

    Now we will show that every Lusztig root vector $x_{ij}$ with $i + j = n +
    1$ gets sent to a multiple of itself by $\phi$. Theorem \ref{root vector,
    fixed, 2} does not apply here because these $x_{ij}$ all commute the same
    way with each normal element $\Theta_p$. In fact, each of these $x_{ij}$
    commute with all of the elements in the normal subalgebra. None of the
    other $x_{ij}$'s behave this way. This means $S :=
    \operatorname{span}_{\mathbb{K}}\left\{x_{ij} \mid i + j = n + 1\right\}$
    is a $\phi$-invariant vector subspace of ${\mathcal U}_q(\mathfrak{n}_J)_1$
    For every $y \in S$, define
    \[
        C(y) := \left\{ x\in \left({\mathcal U}_q(\mathfrak{n}_J)\right)_1
        \mid yx = q xy \right\}.
    \]
    Each $C(y)$ is a vector space. Observe $\operatorname{dim}(C(y)) =
    \operatorname{dim}(C(\phi(y))$.  From the defining relations of ${\mathcal
    U}_q(\mathfrak{n}_J)$, we obtain that $C(x_{ij})$ is spanned by
    $\left\{x_{ik} : k < j\right\} \cup \left\{x_{kj} : k < i\right\}$. Hence
    $\operatorname{dim}(C(x_{ij})) = n - i$. The only elements $y\in S$ with
    $\operatorname{dim}(C(y)) = n - i$ are the nonzero multiples of $x_{ij}$.
    Hence $\phi(x_{ij}) \in \mathbb{K}^\times x_{ij}$.

    We have shown now that $\phi$ is a diagonal automorphism. Since ${\mathcal
    U}_q(\mathfrak{n}_J)$ has rank $n$ as a CGL extension, \cite[Theorems 5.3
    and 5.5]{GY2} imply that $\operatorname{Aut}\left( {\mathcal U}_q \left(
    \mathfrak{n}_J \right) \right) \cong \left( \mathbb{K}^\times \right)^n$.

\end{proof}

\end{document}